\documentclass[a4paper,11pt,oneside]{amsart}
\usepackage{geometry}

\usepackage[english]{babel}
\usepackage{amsmath}
\usepackage{amsthm}
\usepackage{amssymb}
\usepackage{mathtools}
\usepackage{xcolor}
\usepackage{enumitem}
\usepackage{xpatch}
\usepackage{multicol}  
\setlength{\multicolsep}{3.0pt plus 1.0pt minus 0.75pt}

\usepackage{setspace}
\onehalfspacing
\setdisplayskipstretch{}

\makeatletter
 \def\@textbottom{\vskip \z@ \@plus 14pt}
 \let\@texttop\relax
\makeatother

\definecolor{linkblue}{RGB}{1,1,190}
\definecolor{citered}{RGB}{190,1,1}

\usepackage[linkcolor=linkblue
           ,urlcolor=linkblue
           ,citecolor=citered
           ,ocgcolorlinks        
           ,bookmarksopen=True,  
           ]{hyperref}
\usepackage[ocgcolorlinks]{ocgx2}
\makeatletter
  \AtBeginDocument{
    \hypersetup{
      pdftitle  = {\@title},
    }
  }
\makeatother
  
\usepackage[capitalize]{cleveref}

\usepackage{microtype}

\newtheorem{teor}{Theorem}[section]
\newtheorem{defi}[teor]{Definition}
\newtheorem{prop-defi}[teor]{Proposition-Definition}
\newtheorem{lemma}[teor]{Lemma}
\newtheorem{prop}[teor]{Proposition}
\newtheorem{cor}[teor]{Corollary}

\theoremstyle{remark}
\newtheorem{remark}[teor]{Remark}

\newtheorem{example}[teor]{Example}
\newtheorem{examples}[teor]{Examples}

\DeclareMathOperator{\add}{add}
\DeclareMathOperator{\id}{id}
\DeclareMathOperator{\supp}{supp}
\DeclareMathOperator{\size}{size}
\DeclareMathOperator{\Hom}{Hom}
\DeclareMathOperator{\im}{im}
\DeclareMathOperator{\End}{End}

\DeclarePairedDelimiter{\abs}{\lvert}{\rvert}
\DeclarePairedDelimiter{\card}{\lvert}{\rvert}

\setlist[enumerate,1]{label=\textup{(\arabic*)}, ref=\textup{(}\arabic*\textup{)},
  itemsep=0.5em plus 0.15em minus 0.05em,
  topsep=0.5em plus 0.15em minus 0.05em,
  leftmargin=0.75cm}
\setlist[enumerate,2]{label=\textup{(\roman*)}, ref=\textup{(}\roman*\textup{)}
  itemsep=0.5em plus 0.15em minus 0.05em,
  topsep=0.5em plus 0.15em minus 0.05em}
\setlist[itemize, 1]{itemsep=0.5em plus 0.15em minus 0.05em,
  topsep=0.5em plus 0.15em minus 0.05em, leftmargin=0.75cm}

\newlist{equivenumerate}{enumerate}{1}
\setlist[equivenumerate,1]{%
  label=\textup{(\alph*)},
  ref=\textup{(}\alph*\textup{)},
  itemsep=0.5em plus 0.15em minus 0.05em,
  topsep=0.5em plus 0.15em minus 0.05em,
  leftmargin=0.75cm
}

\newlist{proofenumerate}{enumerate}{1}
\setlist[proofenumerate,1]{%
  itemsep=0.5em plus 0.15em minus 0.05em,
  topsep=0.5em plus 0.15em minus 0.05em,
  wide, labelindent=0pt
}

\xpatchcmd{\paragraph}{\normalfont}{{\normalfont\bfseries}}{}{}

\newcommand{\defit}[1]{\textsf{#1}}

\newcommand{\bN}{\mathbb N}
\newcommand{\bQ}{\mathbb Q}
\newcommand{\bR}{\mathbb R}

\newcommand{\cA}{\mathcal A}
\newcommand{\cB}{\mathcal B}
\newcommand{\cC}{\mathcal C}

\DeclareMathOperator{\Tr}{Tr}

\DeclareMathOperator{\Pic}{Pic}
\DeclareMathOperator{\Vmon}{V}

\DeclareMathOperator{\rep}{right}
\DeclareMathOperator{\lep}{left}

\title[Infinite direct sums via monoids]{A monoid-theoretical approach to infinite direct-sum decompositions of modules}

\author{Zahra Nazemian}
\address{University of Graz\\
         Department of Mathematics and Scientific Computing\\
         NAWI Graz\\
          Heinrichstra\ss e 36\\
          8010 Graz, Austria}
\email{zahra.nazemian@uni-graz.at}

\author{Daniel Smertnig}
\address{Faculty of Mathematics and Physics (FMF)\\
  University of Ljubljana
  and Institute of Mathematics, Physics and Mechanics (IMFM)\\
  Jadranska ulica 21\\
  1000 Ljubljana, Slovenia}
\email{daniel.smertnig@fmf.uni-lj.si}

\subjclass[2020]{Primary 16D70; Secondary 16D40, 20M13, 20M75}

\keywords{direct-sum decompositions, monoids of modules, big projective modules, infinitary operations}

\begin{document}

\begin{abstract}
  Let $\cC$ be a class of modules over a ring $R$, closed under direct sums over index sets of cardinality $\kappa$ and isomorphisms, and such that the isomorphism classes form a set.
  The associated monoid of modules $\Vmon(\cC)$ encodes the behavior of finite direct-sum decompositions of modules in $\cC$.
  We endow $\Vmon(\cC)$ with an additional operation reflecting $\kappa$-indexed direct sums, and study the resulting \emph{$\kappa$-monoid} $\Vmon^{\kappa}(\cC)$.

  The crucial \emph{braiding-property} and an equivalent universal property in the category of $\kappa$-monoids, allow us to show: if every module in $\cC$ is a direct sum of modules generated by strictly fewer than $\lambda$ many elements, then all relations on  $V^{\kappa}(\cC)$ are induced by relations between direct sums indexed by sets of cardinality strictly less than $\lambda$.
  A theorem of Kaplansky states that every projective module is a direct sum of countably generated modules.
  Our results augment this, by showing that also all relations between infinite direct sums of projective modules are induced from relations between countable direct sums of countably generated projective modules.
  
  If every projective module over a ring $R$ is a direct sum of finitely generated projective modules (e.g., if $R$ is hereditary), then all relations between direct sums are even induced from relations between finite direct sums of finitely generated projective modules.
  Then the monoid of finitely generated projective modules $\Vmon(R)$ completely determines the $\kappa$-monoid $\Vmon^{\kappa}(R)$.
  Together with the realization result of Bergman and Dicks, this characterizes the $\kappa$-monoids appearing as $\Vmon^{\kappa}(R)$ for a hereditary ring as the universal $\kappa$-extensions of reduced commutative monoids with order-unit.
  In general, the $\aleph_0$-monoid $\Vmon^{\aleph_0}(R)$ fully determines $\Vmon^{\kappa}(R)$.
  Herbera and Příhoda's characterization of monoids of countably generated projective modules $\Vmon^*(R)$ over semilocal noetherian rings, yields a characterization of $\Vmon^{\kappa}(R)$ for these rings.
  Finally, we characterize two-generated $\aleph_0$-monoids that can be realized as $V^{\aleph_0}(R)$ for hereditary rings $R$.
\end{abstract}

\date{}

\maketitle

\section{Introduction}

Let $\cC$ be a class of (right) modules over some (unital, associative) ring $R$, and assume that $\cC$ is closed under finite direct sums and under isomorphisms (typically, one also assumes that $\cC$ is closed under direct summands).
To avoid set-theoretic issues, we require that the isomorphism classes of $\cC$ form a set.
Then the set of isomorphism classes together with the operation induced by the direct sum form a monoid, denoted by $\Vmon(\cC)$.
Understanding direct-sum decompositions in $\cC$ reduces to the study of the arithmetic of the monoid of $V(\cC)$.
For instance, the class $\cC$ has the Krull--Remak--Schmidt--Azumaya (KRSA) property, meaning that (finite) direct-sum decompositions into indecomposables exist and are unique, if and only if the monoid $\Vmon(\cC)$ is a free abelian monoid.

Of particular interest is the case where $\cC$ is the class of all finitely generated projective $R$-modules, in which case one writes $\Vmon(R)$ for the monoid (the Grothendieck group of $\Vmon(R)$ is $K_0(R)$).
If $M$ is a module and $\add(M)$ is the class of all direct summands of $M^n$ for $n \ge 0$, then $\Vmon(\add(M)) \cong \Vmon(\End(M))$, so that in principle the study of finite direct-sum decompositions can always be reduced to the study of some monoid of finitely generated projective modules.

The monoid-theoretical approach to (finite) direct-sum decompositions was pioneered by Facchini, Herbera, and Wiegand in the last two decades (see the surveys \cite{Facchini06,BaethWiegand13} and the monographs \cite{LeuschkeWiegand12,Facchini19}), and has proven fruitful in several interesting cases.
Two of the main directions are as follows.
First, if $R$ is semilocal, then $\Vmon(R)$ is a Diophantine monoid (a submonoid of $\bN_0^n$ defined by homogeneous linear equations), as it can be embedded into $\Vmon(R/J(R))$ via a divisor homomorphism \cite{FacchiniHerbera00,FacchiniHerbera00a}.
Diophantine monoids are finitely generated reduced Krull monoids, and conversely, every finitely generated reduced Krull monoid is isomorphic to a Diophantine monoid \cite{ChapmanKrauseOeljeklaus02}.
More generally, if $\cC$ is closed under direct summands and every module in $\cC$ has a semilocal endomorphism ring, then $\Vmon(\cC)$ is a reduced Krull monoid \cite{Facchini02}.
This reduces the study of the arithmetic of $\Vmon(\cC)$ to the (non-trivial) determination of its divisor class group and the distribution of prime divisors (the recent survey \cite{GeroldingerZhong20} is a starting point into the extensive literature on factorization theory in Krull monoids).
Second, let $(R,\mathfrak m)$ be a commutative noetherian local ring and $\widehat R$ is its $\mathfrak m$-adic completion.
If $\mathcal M(R)$ is the class of all finitely generated $R$-modules, then $\Vmon(\mathcal M(R))$ embeds into the free abelian monoid $\Vmon(\mathcal M(\widehat R))$ via a divisor homomorphism \cite{Wiegand01}.
Again, the monoid $V(\mathcal M(R))$ is a finitely generated reduced Krull monoid, and this permits one to study $V(\mathcal M(R))$ in terms of $V(\mathcal M(\widehat R))$.
In several cases of one- or two-dimensional domains, the divisor class groups and the distribution of prime divisors are sufficiently well-understood to allow one to study the arithmetic of the monoids \cite{Baeth09,BaethGeroldinger14,BaethLuckas11,BaethGeroldingerGrynkiewiczSmertnig15,Grynkiewicz22}.

In a converse direction, there are realization results showing that very large classes of commutative monoids appear as $\Vmon(R)$ of certain classes of rings, thereby showing that monoids of modules can be essentially arbitrarily complicated in general.
By a seminal theorem of Bergman \cite{Bergman} and Bergman and Dicks \cite{BergWar}, every reduced commutative monoid with order-unit is isomorphic to $\Vmon(R)$ for a hereditary $K$-algebra $R$ (with an arbitrary choice of ground field $K$).
Ara and Goodearl recently showed that this realization is possible with $R$ a Leavitt path algebra of a separated graph \cite{AraGoodearl12}.
Herbera and Facchini showed that every reduced finitely generated Krull monoid appears as $V(R)$ for a semilocal ring \cite{FacchiniHerbera00a}, and Wiegand has a similar realization result for finitely generated modules over two-dimensional local UFDs \cite{Wiegand01}.
Facchini and Wiegand realized every reduced Krull monoid as $\Vmon(\cC)$ with $\cC$ a class of modules whose endomorphism rings are semilocal \cite{FacchiniWiegand04}.

For (von Neumann) regular rings $R$, the monoid $\Vmon(R)$ is always a refinement monoid.
However, Wehrung constructed a refinement monoid of cardinality $\aleph_2$ that cannot be realized as $V(R)$ of a regular ring \cite{Frid}.
In a recent breakthrough Ara, Bosa, and Pardo showed that every finitely generated refinement monoid appears as $\Vmon(R)$ of a regular ring \cite{PJuan}.
It is still open whether every countable refinement monoid appears as $V(R)$ of a regular ring.
Realization results of this type are useful to construct negative results (that is, counterexamples) in (finite) direct-sum decompositions, as it is usually much easier to construct a monoid exhibiting certain degenerate behavior and then realize that monoid as $\Vmon(R)$, than it is to construct a corresponding ring directly.

For a long time, the focus remained on finitely generated projective modules, undoubtedly to some degree due to a result of Bass \cite{Bass63}, showing that non-finitely generated projective modules are free in many circumstances.
However, despite Bass's result, over many natural classes of rings, the non-finitely generated projective modules exhibit a rich and interesting behavior.
Powered by recent results yielding a better understanding of non-finitely generated projective modules, Herbera and Příhoda completely characterized the monoids of \emph{countably} generated projective modules $\Vmon^*(R)$ for noetherian semilocal rings \cite{PavelHerbera}.
(Without the noetherian hypothesis partial results are available \cite{infinitepullback}.)
Recently, Herbera, Příhoda, and Wiegand initiated the study of $\Vmon^*(M)$, the monoid of isomorphism classes of countably generated modules that are direct summands of some number of copies of $M$ \cite{Herbera14,HerberaPrihoda14,HerberaPrihodaWiegand23}.
For hereditary noetherian prime rings, the monograph by Levy and Robson \cite[Chapter 8]{LevyRobson11} completely describes the non-finitely generated projective modules.

A classical theorem of Kaplansky shows that every projective module is a direct sum of countably generated modules \cite{Kaplansky58}, so that the determination of $\Vmon^*(R)$ is sufficient to find all indecomposable projective modules.
However, the class of countably generated projective modules is also closed under \emph{countable} direct sums, a fact that is not reflected in the monoid structure of $\Vmon^*(R)$.
This suggests to endow $\Vmon^*(R)$ with an additional operation, to allow us to also study \emph{infinite} direct-sum decompositions by monoid-theoretical means.
Our paper presents a first step in this endeavor.

More generally, for any infinite cardinal $\kappa$ we introduce the notion of a $\kappa$-monoid (\cref{def:kappa-monoid}), a structure that allows sums on index sets of cardinality $\kappa$, with the sum obeying an associativity law paralleling the one for direct sums of modules.
Every $\kappa$-monoid is a commutative monoid (\cref{l:kappa-basic} and the discussion following it).
A variant of the Eilenberg--Mazur swindle applies (\cref{infinite}) to show that the monoid is also reduced (that is, any equation of the form $a+b=0$ implies $a=b=0$), as we expect for a monoid of modules.
We establish the basic properties of $\kappa$-monoids in \cref{sec:kappa-monoids}.
For a regular cardinal $\lambda$ we also define a $\lambda^-$-monoid to be an analogous structure in which sums over index sets of cardinality strictly less than $\lambda$ are defined (\cref{subsec:kappaminus-monoids}).

It is not hard to see that a reduced commutative monoid may admit several extensions to a $\kappa$-monoid.
For instance, one can extend the monoid $\bR_{\ge 0}$ to an $\aleph_0$-monoid $\bR_{\ge 0} \cup \{\infty\}$, by using the notion of convergent (and divergent) series to assign a value to countable sums.
The restriction to non-negative reals ensures the required associativity (and in turn, commutativity).
Alternatively, one can declare any countable sum in $\bR_{\ge 0}$ with infinite support to have value $\infty$, to obtain a different $\aleph_0$-monoid structure on $\bR_{\ge 0} \cup \{\infty\}$.
A first natural question is therefore:
which $\kappa$-monoids appear as $\kappa$-monoids of modules?

\cref{sec:braiding} lays the groundwork for answering this question (to some degree).
We introduce the crucial property for two $\kappa$-indexed families over a $\lambda^-$-monoid to be \emph{$\lambda^-$-braided}.
Consequently we also define the concept of a $\kappa$-monoid $H$ being \emph{$\lambda^-$-braided} over a $\lambda^-$-submonoid $X$.
As one of our main results, we show that $H$ being $\lambda^-$-braided over $X$ is equivalently characterized by a natural universal property in the category of $\kappa$-monoids, and that this \emph{universal $\kappa$-extension} always exists (\cref{p:braiding-extend,t:universal-exist}).
Returning to the example of the monoid $\bR_{\ge 0}$, perhaps at first surprisingly, neither of the two $\aleph_0$-monoid structures that we mentioned turn out to be the universal $\aleph_0$-extension.
Instead, the universal $\aleph_0$-extension is
\[
  \bR_{\ge 0} \cup \widetilde{\bR}_{> 0} \cup \{\infty\}
\]
(see \cref{e:kappa-braided} and \ref{e:univ-kappa-ext}).

Now let $\cC$ be a class of modules closed under $\kappa$-indexed direct sums, isomorphisms, and direct summands.
Assume that the isomorphism classes of $\cC$ form a set.
For a regular cardinal $\lambda \le \kappa$, let $\cC_{\lambda^-}$ be the subclass of $\cC$ consisting of modules generated by strictly fewer than $\lambda$ many elements.
Connecting the notion of $\kappa$-monoids back to monoids of modules, our first main result is \cref{t:module-braiding}, showing that $\Vmon^{\kappa}(\cC)$ is the universal $\kappa$-extension of $\Vmon^{\lambda^-}(\cC_{\lambda^-})$.
Thus, whereas a $\lambda^-$-monoid affords potentially different extensions to a $\kappa$-monoid, in the module-theoretic setting there is a unique extension, characterized by a natural universal property (under the assumption that every module in $\cC$ is a direct sum of $<\!\lambda$-generated modules).
More explicitly, in the setting of modules, the fact that every module in $\cC$ is a direct sum of $<\!\lambda$-generated modules suffices to ensure that also all relations between direct sums of modules are induced by direct sums of $<\!\lambda$-generated modules on index sets of cardinality $<\!\lambda$.

The proof of \cref{t:module-braiding} proceeds by showing that $\Vmon^{\kappa}(\cC)$ is $\lambda^-$-braided over $\Vmon^{\lambda^-}(\cC)$.
At its heart, this is a transfinite recursion of an elementary module-theoretic property recently observed by Bergman \cite{Bergman23}.
Naturally, a similar strategy and property already appears in Levy and Robson's characterization of non-finitely generated modules over hereditary noetherian prime rings (see the proof of \cite[Theorem 46.1]{LevyRobson11}).

The remainder of \cref{sec:modules} is concerned with the implications of this result on the module-theoretic side, and with a discussion of the existing literature in the context of this new setting of $\kappa$-monoids.
We summarize the key points in the following.

Kaplansky's Theorem states that every projective module is a direct sum of countably generated modules.
However, this theorem says nothing about relations between infinite direct sums of countably generated projective modules.
In the $\kappa$-monoid language, Kaplansky's Theorem says that the $\kappa$-monoid $\Vmon^{\kappa}(R)$ is generated by the subset $\Vmon^{\aleph_0}(R)$.
The $\aleph_0$-monoid $\Vmon^{\aleph_0}(R)$ then completely determines $\Vmon^{\kappa}(R)$ (\cref{cor:projective-braided}).
Translated back into more elementary terms, we obtain the following supplement to Kaplansky's Theorem: every relation between infinite direct sums of projective modules is induced by relations between countable direct sums of countably generated projective modules.

In many interesting classes of rings, in particular, over hereditary rings, projective modules are direct sums of finitely generated modules (see \cref{cor:fin-braided} and the paper by McGovern, Puninski, and Rothmaler \cite{McGovernPuninskiRothmaler07}).
In this case, our results even show that $\Vmon(R)$ fully determines $\Vmon^{\kappa}(R)$.
So, for instance, if $\Vmon(R) \cong \bR_{\ge 0}$ for a hereditary ring (such a ring exists by the Bergman--Dicks Theorem), then the description of the universal $\aleph_0$-extension of $\bR_{\ge 0}$ implies $\Vmon^{\aleph_0}(R) \cong \bR_{\ge 0} \cup \widetilde{\bR}_{>0} \cup \{\infty\}$, giving us a description of the countably generated projective modules for free.
In particular, since the monoids appearing as $\Vmon(R)$ of hereditary rings are precisely the reduced commutative monoids with order-unit, the $\kappa$-monoids appearing as $\Vmon^{\kappa}(R)$ for hereditary rings $R$ are precisely the universal $\kappa$-extensions of reduced commutative monoids with order-unit.
  
In \cref{e:module-examples} we discuss several important classes of monoids of modules appearing in the literature in the context of the $\kappa$-monoid structure.
In particular, we describe the case of the semilocal noetherian rings studied by Herbera and Příhoda, and that of hereditary noetherian prime rings studied by Levy and Robson.

Kaplansky used his result to show that projective modules over local rings are free, by reducing to the countably generated case.
In a similar spirit, our results allow the lifting of the KRSA property.
Theorems of the following type can easily be obtained: if $\cC$ is closed under isomorphism, direct sums, and direct summands, and every module in $\cC$ is a direct sum of finitely generated modules, then KRSA for finite direct sums of finitely generated modules implies KRSA for arbitrary direct sums in $\cC$.

Finally, in \cref{sec:twogen} we give a more explicit description of two-generated $\aleph_0$-monoids that appear as $\Vmon^{\aleph_0}(R)$ of a hereditary ring (with the cyclic case being trivial).

\smallskip\paragraph{Notations and conventions.}
All rings considered are unital and associative, but in general not commutative.
Modules are right modules, unless specified otherwise.
We use infinite ordinals and cardinals, but to avoid set-theoretical difficulties, we always consider such cardinals up to a fixed bound $\kappa$.
As underlying axiom system we have in mind the usual ZFC (with informal classes).
As usual in this context, we use the von Neumann definition of ordinals, and a cardinal is the smallest ordinal of a given cardinality (but cardinals are by default just considered as sets, forgetting the well-order).
We make liberal use of the axiom of choice, to ensure that the basic properties of cardinal arithmetic are well-behaved.
See \cite[Chapter 5]{roitman90} for background.
The cardinality of a set $X$ is denoted by $\card{X}$.

\smallskip\paragraph{Acknowledgments.}
Nazemian would like to thank Bergman for useful discussions.
The authors were supported by the Austrian Science Fund (FWF): P 36742.
Smertnig was supported by the Slovenian Research and Innovation Agency (ARIS): Grant P1-0288.
Part of the research was conducted while Smertnig was employed by the University of Graz, NAWI Graz, Austria.

\section{Infinite summation: \texorpdfstring{$\kappa$}{κ}-monoids} \label{sec:kappa-monoids}

In this section we introduce \defit{$\kappa$-monoids} for infinite cardinals $\kappa$.
This notion extends the notion of a commutative monoid to allow the summation of up to $\kappa$ many elements.
The axioms are taken in such a way that they model the behavior of direct sums of modules.

If $x \coloneqq (x_i)_{i \in \kappa} \in H^{\kappa}$ is a family indexed by elements of $\kappa$ over some set $H$, and $\Sigma \colon H^{\kappa} \to H$ is a map, we use the notation
\[
  \sum_{i \in \kappa} x_i \coloneqq \Sigma\big( (x_i)_{i \in \kappa}\big),
\]
mimicking the typical summation notation. 
Further, $\supp(x) \coloneqq \{\, i \in \kappa : x_i \ne 0 \,\}$ denotes the \defit{support} of the family.

\begin{defi} \label{def:kappa-monoid}
  Let $\kappa$ be an infinite cardinal.
  A $\kappa$-monoid is a set $H$ together with an element $0 \in H$ and a map $\Sigma \colon H^{\kappa} \to H$ such that the following conditions are satisfied.
  \begin{enumerate}[label=\textup{(A\arabic*)},leftmargin=1.25cm]
  \item \label{a:trivial} If $x=(x_i)_{i \in \kappa} \in H^\kappa$ with $x_i=0$ for all $i \ne 0$, then $\sum_{i \in \kappa} x_i=x_0$.
  \item \label{a:flatten}
    If $(x_{i,j})_{i,j \in \kappa} \in H^{\kappa \times \kappa}$ and $\pi \colon \kappa \times \kappa \to \kappa$ is a bijection, then
    \[
      {\sum_{i \in \kappa}} {\sum_{j \in \kappa}} x_{i,j} = {\sum_{k \in \kappa}} x_{\pi^{-1}(k)}.
    \]
  \end{enumerate}
\end{defi}

\begin{remark}
  \begin{enumerate}
  \item Axiom~\ref{a:flatten} will automatically imply that all $\kappa$-monoids are commutative (see \cref{l:kappa-basic} below).
    To introduce a notion that permits noncommutative operations, one should presumably parametrize the family by ordinals and assume an order-preserving bijection in the analogue of \ref{a:flatten}.
    However, we have no use for such a notion, and do not pursue this.

  \item It is occasionally useful to permit an arbitrary index set $I$ with $\card{I}=\kappa$.
    One can define the summation over the index set $I$ by choosing an arbitrary bijection between $I$ and $\kappa$.
    This is well-defined and independent of the chosen bijection, as \cref{l:kappa-basic} below will show that the operation does not depend on the order of the elements in the summation.

  \item Infinitary operations have occasionally appeared before in the literature.
    In particular, our notion of a $\kappa$-monoid corresponds to that of a \emph{complete $\kappa$-$\Sigma$-algebra} satisfying axioms (U) and (GP) in \cite[Chapter IV.1]{HebischWeinert98}.
    We also mention the notes \cite{vocab}, where infinitary monoids appear in 3.4.11.8 on page 89.
  \end{enumerate}
\end{remark}

We start with a number of easy examples of $\kappa$-monoids.

\begin{examples} \label{e:kappa}
  \mbox{}
    \begin{enumerate}
  \item \label{e-kappa:trivial} Let $(M,+,0)$ be a reduced\footnote{A monoid is \defit{reduced} (or \defit{conical}) if $a+b=0$ implies $a=b=0$, for $a$,~$b \in M$.} commutative monoid and set $H \coloneqq M \uplus \{\infty\}$.
    For a cardinal $\kappa$ and a family $x=(x_i)_{i \in \kappa}$ we define $\Sigma^\kappa\big( (x_i)_{i \in \kappa} \big)$ to be the sum of the family in $M$ if the support of $x$ is finite and $x_i \in M$ for all $i \in \kappa$, and we define $\Sigma^\kappa\big( (x_i)_{i \in \kappa} \big) = \infty$ otherwise.
    That is, we have $\Sigma^\kappa\big( (x_i)_{i \in \kappa} \big)=\infty$ if $x_i=\infty$ for some $i$ or infinitely many $x_i$ satisfy $x_i \ne 0$.
    With these operations, the set $H$ is a $\kappa$-monoid for every cardinal $\kappa$, the \defit{trivial $\kappa$-extension} of $M$.
    
    The assumption that $M$ is reduced is necessary for \ref{a:flatten} to hold: if we had $0 \ne a$ and $-a$ in $M$, then
    \[
      \sum_{i \in \kappa} (a + (-a) + 0 + \cdots) = 0, \qquad\text{but} \qquad \sum_{i \in \kappa} x_i = \infty,
    \]
    with $x_i$ alternating between $a$ and $-a$.

  \item \label{e-kappa:reals} Let $H = \bR_{\ge 0}\cup \{\infty\}$.
    For a sequence $(x_i)_{i \ge 0}$, define
    \[
      \Sigma^{\aleph_0} (x_i)_{i \ge 0} \coloneqq \sum_{i=0}^\infty x_i,
    \]
    where the sum on the right side is understood to be the sum of the corresponding series in the usual sense of analysis (i.e., the limit of the partial sums).
    Since $x_i \ge 0$ for all $i \ge 0$, the summation does not depend on the order of the elements and is associative for double-indexed sequences.
    Therefore $H$ is an $\aleph_0$-monoid.
    Observe that the operation differs from the one in the first example in the value that is assigned to convergent series.

  \item \label{kappa:cardinals}
    If $I$ is a set, $(\alpha_i)_{i \in I}$ is a family of cardinals, and $(X_i)_{i \in I}$ is a disjoint family of sets with $\card{X_i}=\alpha_i$ for each $i$, then one defines
    \[
      \sum_{i \in I} \alpha_i \coloneqq \card[\Big]{\biguplus_{i \in I} X_i}.
    \]
    The axiom of choice implies that this definition does not depend on the particular choice of the sets $X_i$.

    If $\kappa$ is an infinite cardinal, and $\alpha_i \le \kappa$ for all $i \in I$ as well as $\card{I} \le \kappa$, then $\sum_{i \in I} \alpha_i \le \kappa$.
    Thus the set $F_\kappa \coloneqq \{\, \alpha : \alpha \le \kappa \,\}$ of all cardinals bounded by $\kappa$ is naturally a $\kappa$-monoid.

  \item \label{kappa:module-class}
    Let $R$ be a ring and let $\mathcal C$ be a class of modules whose isomorphism classes form a set.
If $\mathcal C$ is closed under isomorphisms and direct sums over index sets of cardinality at most $\kappa$, then the set of isomorphism classes naturally forms a $\kappa$-monoid.
In particular, the class $\mathcal C$ of all projective (right) modules generated by at most $\kappa$ many elements is closed under isomorphism and direct sums over index sets of cardinality at most $\kappa$.
  \end{enumerate}
\end{examples}

The $\kappa$-monoids in \ref{kappa:module-class} will be the main instances of $\kappa$-monoids of interest later in the paper.
Hence we introduce some corresponding notation.

\begin{defi} \label{def:vmon}
  Let $R$ be a ring, let $\mathcal C$ be a class of modules whose isomorphism classes form a set, and let $\kappa$ be an infinite cardinal.
  \begin{enumerate}
  \item
    If $\mathcal C$ is closed under isomorphisms and direct sums over index sets of cardinality at most $\kappa$, we denote the $\kappa$-monoid of isomorphism classes of modules in $\mathcal C$ by $\Vmon^{\kappa}(\mathcal C)$.

  \item $\Vmon^{\kappa}(R)$ is the $\kappa$-monoid of isomorphism classes of projective \textup(right\textup) modules generated by at most $\kappa$ elements.

  \item $\Vmon(R)$ is the monoid of finitely generated projective modules. 
  \end{enumerate}
\end{defi}

The stated axioms for $\kappa$-monoids are sufficient to derive commutativity and associativity laws, as we show next.

\begin{lemma} \label{l:kappa-basic}
  Let $H$ be a $\kappa$-monoid.
  Then the following properties hold.
  \begin{enumerate}[label=\textup{(A\arabic*)},leftmargin=1.2cm]
    \setcounter{enumi}{2}
  \item \label{a:commutative} If $x=(x_i)_{i \in \kappa} \in H^\kappa$ and $\pi \colon \kappa \to \kappa$ is bijective, then
    \[
      \sum_{i \in \kappa} x_i = \sum_{i \in \kappa} x_{\pi(i)}.
    \]
  \item \label{a:associative}
    If $(x_{i,j})_{i,j \in \kappa} \in H^{\kappa \times \kappa}$, then
    \[
      {\sum_{i \in \kappa}} {\sum_{j \in \kappa}} x_{i,j} = {\sum_{j \in \kappa}} {\sum_{i \in \kappa}} x_{i,j}.
    \]
  \end{enumerate}
\end{lemma}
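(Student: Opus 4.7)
The plan is to establish \ref{a:associative} first, as it is almost immediate from \ref{a:flatten}, and then \ref{a:commutative}, which requires a more delicate construction combining \ref{a:trivial} and \ref{a:flatten}. For \ref{a:associative}, I fix any bijection $\phi \colon \kappa \times \kappa \to \kappa$ (existing because $\card{\kappa \times \kappa} = \card{\kappa}$ for infinite $\kappa$) and let $\tau \colon \kappa \times \kappa \to \kappa \times \kappa$ be the coordinate swap $\tau(i,j) \coloneqq (j,i)$. Applying \ref{a:flatten} to $(x_{i,j})$ with $\phi$ gives $\sum_{i \in \kappa}\sum_{j \in \kappa} x_{i,j} = \sum_{k \in \kappa} x_{\phi^{-1}(k)}$, while applying \ref{a:flatten} to the relabelled family $(y_{j,i})$ defined by $y_{j,i} \coloneqq x_{i,j}$ with the bijection $\phi \circ \tau$ gives $\sum_{j \in \kappa}\sum_{i \in \kappa} x_{i,j} = \sum_{k \in \kappa} y_{(\phi \circ \tau)^{-1}(k)} = \sum_{k \in \kappa} x_{\phi^{-1}(k)}$, using $\tau^{-1} = \tau$ and $y_{\tau(a,b)} = x_{a,b}$. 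Thus both iterated sums equal the same flattened sum.

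For \ref{a:commutative} (where $\pi \colon \kappa \to \kappa$ is the given permutation), the idea is to encode both $\sum_{i \in \kappa} x_i$ and $\sum_{i \in \kappa} x_{\pi(i)}$ as iterated sums of two double-indexed families, then exploit the freedom in the choice of bijection in \ref{a:flatten} to flatten them into the \emph{same} single-indexed family. Define $y_{i,j}, z_{i,j} \in H$ by $y_{i,0} \coloneqq x_i$, $z_{i,0} \coloneqq x_{\pi(i)}$, and $y_{i,j} \coloneqq z_{i,j} \coloneqq 0$ for $j \neq 0$. Applying \ref{a:trivial} to each inner sum yields $\sum_{j \in \kappa} y_{i,j} = x_i$ and $\sum_{j \in \kappa} z_{i,j} = x_{\pi(i)}$, so the iterated sums are precisely the two sides of \ref{a:commutative}.

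To align the two flattenings, fix an injection $g \colon \kappa \to \kappa$ whose image has complement of cardinality $\kappa$ (which exists since $\kappa$ is infinite). Extend $(i,0) \mapsto g(i)$ to a bijection $\phi_1 \colon \kappa \times \kappa \to \kappa$, and $(i,0) \mapsto g(\pi(i))$ to a bijection $\phi_2 \colon \kappa \times \kappa \to \kappa$; both extensions exist because $\kappa \times (\kappa \setminus \{0\})$ and $\kappa \setminus g(\kappa)$ each have cardinality $\kappa$. Applying \ref{a:flatten} then gives $\sum_{i \in \kappa} x_i = \sum_{k \in \kappa} y_{\phi_1^{-1}(k)}$ and $\sum_{i \in \kappa} x_{\pi(i)} = \sum_{k \in \kappa} z_{\phi_2^{-1}(k)}$. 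The two flattened families coincide: both vanish off $g(\kappa)$, and at $k = g(i_0)$ both equal $x_{i_0}$ (for $z$, note $\phi_2^{-1}(g(i_0)) = (\pi^{-1}(i_0), 0)$, so $z_{\pi^{-1}(i_0), 0} = x_{\pi(\pi^{-1}(i_0))} = x_{i_0}$). Hence the two sums agree. The main subtlety lies in this bijection bookkeeping: $g$ must leave enough room in its codomain for the extensions to exist, which is exactly where the infinitude of $\kappa$ enters, and which reflects the philosophical content that \ref{a:flatten} applies to \emph{every} bijection $\kappa \times \kappa \to \kappa$—an abundance of flexibility that permits two genuinely different single-indexed families to be presented as identical flattenings of their respective double-indexed lifts.
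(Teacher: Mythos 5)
Your proof is correct and follows essentially the same route as the paper: associativity via the transpose bijection $\tau$ composed with a flattening bijection, and commutativity by lifting $(x_i)$ to a doubly indexed family supported on the slice $j=0$ and flattening the original and permuted versions to the \emph{same} $\kappa$-indexed family using the freedom of bijections in \ref{a:flatten}. The only cosmetic difference is that the paper obtains the second bijection simply as $f \circ (\pi^{-1},\id)$ for a fixed bijection $f\colon \kappa\times\kappa\to\kappa$, whereas you build $\phi_1,\phi_2$ by hand from an injection $g$ with co-large image; your extra bookkeeping is valid but not needed.
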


\begin{proof}
  \begin{proofenumerate}
  \item[\ref{a:commutative}]
  This commutativity law is a consequence of \ref{a:flatten}:
  define the family $(y_{i,j})_{i,j \in \kappa}$ by $y_{i,0}=x_i$ and $y_{i,j}=0$ for $j \ne 0$.
  By \ref{a:trivial}, we have $\sum_{j \in \kappa} y_{i,j} = x_i$.
  Let $f\colon \kappa \times \kappa \to \kappa$ be a bijection.
  Then $f \circ (\pi^{-1},\id) \colon \kappa \times \kappa \to \kappa$ is also a bijection.
  Since \ref{a:flatten} holds for arbitrary bijections, we get
  \[
    \sum_{i \in \kappa} x_i = \sum_{i \in \kappa} \sum_{j \in \kappa} y_{i,j} =\sum_{l \in \kappa} y_{(\pi,\id) \circ f^{-1}(l)}.
  \]
  Applying once more \ref{a:flatten} to $(y_{\pi(i),j})_{i,j \in \kappa}$  and the bijection $f$, gives
  \[
    \sum_{l \in \kappa} y_{(\pi,\id) \circ f^{-1}(l)} = \sum_{i \in \kappa} \sum_{j \in \kappa} y_{\pi(i),j} = \sum_{i \in \kappa} x_{\pi(i)}.
  \]

  \item[\ref{a:associative}]
  Let $f \colon \kappa \times \kappa \to \kappa$ be a bijection and let $\tau\colon \kappa \times \kappa \to \kappa \times \kappa$ be the transposition $\tau(i,j)=(j,i)$.
  Using \ref{a:flatten} twice,
  \[
    \sum_{i \in \kappa} \sum_{j \in \kappa} x_{i,j} = \sum_{l \in \kappa} x_{\tau \circ f^{-1}(l)} = \sum_{j \in \kappa} \sum_{i \in \kappa} x_{i,j}. \qedhere
  \]
  \end{proofenumerate}
\end{proof}

Suppose that $H$ is a $\kappa$-monoid with operation $\Sigma^\kappa=\Sigma$.
If $\alpha < \kappa$ is another cardinal (not necessarily infinite), and $\iota \colon \alpha \to \kappa$ is an injective map, we define $\Sigma^\alpha \colon H^\alpha \to H$ by
\[
  \Sigma^\alpha\big( (x_i)_{i \in \alpha} \big) \coloneqq \Sigma^\kappa\big( (x_{\iota^{-1}(j)})_{j \in \kappa} \big),
\]
where we use the convention $x_{\iota^{-1}(j)}=0$ if $j$ is not in the image of $\iota$.
The commutativity property \ref{a:commutative} ensures that this definition is independent of the particular choice of $\iota$.
It is now easy to check the following.

\begin{itemize}
\item If $\alpha \le \kappa$ is an infinite cardinal, then $(H, \Sigma^\alpha)$ is an $\alpha$-monoid.
\item $(H,\Sigma^2)$ is a commutative monoid, and we write $+ \coloneqq \Sigma^2$.
\item For every finite $n > 2$, the map $\Sigma^n \colon H^n \to H$ is an $n$-ary operation, and one checks using \ref{a:flatten} that
  \[
    \Sigma^n(x_1,\ldots,x_n) = x_1 + (x_2 + (x_3 + \cdots + x_n)) = x_1 + \cdots + x_n,
  \]
  where $+=\Sigma^2$ is the binary operation of the previous point.
\item As degenerate special cases, the map $\Sigma^1 \colon H \to H$ is the identity, and with the convention that $H^0=\{0\}$, the map $\Sigma^0 \colon \{0\} \to H$ is the constant map equal to $0$.
\end{itemize}

Given these properties, and for consistency of notation, an $n$-monoid for a finite cardinal $n$ shall simply be a commutative monoid if $n \ge 2$, shall be the set $H$ together with the distinguished element $0$ and the identity map if $n=1$, and shall be the set $H$ together with the distinguished element $0$ and the constant zero map if $n=0$.

\begin{defi}
  Let $H$ be a $\kappa$-monoid.
  Let $x \in H$ and $\alpha \leq \kappa$ a \textup(finite or infinite\textup) cardinal.
  We define
  \[
    \alpha x \coloneqq \sum_{i \in \alpha} x,
  \]
  where the sum on the right side is to be understood in terms of the natural $\alpha$-monoid structure on $H$.
\end{defi}

If $\alpha$ is infinite, then property \ref{a:flatten} implies $x + \alpha x  =  \alpha x $.
For this scalar multiplication, the expected properties hold, as we now show.

\begin{lemma} \label{l:kappa-rules}
  Let $H$ be a $\kappa$-monoid and let $x \in H$. Then
  \begin{enumerate}
  \item \label{kappa-rules:trivial} $0 x = 0$ and $1x=x$,
  \item \label{kappa-rules:distr1} If $(\lambda_i)_{i \in I}$ is a family of cardinals with $\card{I} \leq  \kappa$ and $\lambda_i \leq  \kappa$ for $i \in I$, then
    \[
      \Big(\sum_{i \in I} \lambda_i\Big) x = \sum_{i \in I} \lambda_i x.
    \]
  \item \label{kappa-rules:distr2} If $\alpha \le \kappa$ is a cardinal and $(x_i)_{i \in I}$ is a family in $H$ with $\card{I} \le \kappa$, then
    \[
      \alpha \sum_{i \in I} x_i = \sum_{i \in I} \alpha x_i.
    \]
  \end{enumerate}
\end{lemma}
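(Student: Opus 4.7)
The strategy throughout is to reduce each part to a direct application of the flattening axiom \ref{a:flatten} and its consequences \ref{a:commutative} and \ref{a:associative} from \cref{l:kappa-basic}, by setting up an appropriate doubly-indexed family on $\kappa \times \kappa$. Part \ref{kappa-rules:trivial} is immediate from the conventions established immediately before the lemma: $\Sigma^0$ is the constant map with value $0$, so $0x = 0$, and $\Sigma^1$ is the identity, so $1x = x$.

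For part \ref{kappa-rules:distr1}, set $\mu \coloneqq \sum_{i \in I} \lambda_i$; note $\mu \le \kappa$ by standard cardinal arithmetic. Fix injections $\iota \colon I \hookrightarrow \kappa$ and $\iota_i \colon \lambda_i \hookrightarrow \kappa$ for each $i \in I$, and define a family $(y_{r,s})_{r,s \in \kappa}$ by setting $y_{r,s} = x$ if $r = \iota(i)$ and $s = \iota_i(j)$ for some $i \in I$ and $j \in \lambda_i$, and $y_{r,s} = 0$ otherwise. For fixed $r$, the inner sum $\sum_{s \in \kappa} y_{r,s}$ unwinds to $\lambda_i x$ when $r = \iota(i)$ and to $0$ otherwise, using only the definition of scalar multiplication together with the independence from the embedding afforded by \ref{a:commutative}. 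Hence $\sum_{r \in \kappa}\sum_{s \in \kappa} y_{r,s} = \sum_{i \in I} \lambda_i x$. Applying \ref{a:flatten} rewrites this double sum as a single $\kappa$-indexed sum whose value is $x$ on exactly $\mu$ coordinates and $0$ elsewhere, which by definition of $\mu x$ equals $\bigl(\sum_{i \in I} \lambda_i\bigr) x$.

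For part \ref{kappa-rules:distr2}, define $(z_{r,s})_{r,s \in \kappa}$ analogously: fix injections $\iota \colon I \hookrightarrow \kappa$ and $\iota_\alpha \colon \alpha \hookrightarrow \kappa$, and set $z_{r,s} = x_i$ when $r = \iota(i)$ and $s \in \im(\iota_\alpha)$, and $z_{r,s} = 0$ otherwise. Summing over $s$ first gives $\sum_{s \in \kappa} z_{r,s} = \alpha x_i$ for $r = \iota(i)$ and $0$ otherwise, so the iterated sum evaluates to $\sum_{i \in I} \alpha x_i$. Summing over $r$ first gives $\sum_{r \in \kappa} z_{r,s} = \sum_{i \in I} x_i$ for $s \in \im(\iota_\alpha)$ and $0$ otherwise, so the iterated sum evaluates to $\alpha \sum_{i \in I} x_i$. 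Equality of the two iterated sums via \ref{a:associative} yields the claim.

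The only real obstacle is the bookkeeping required to handle inner sums on proper subsets of $\kappa$ via padding with zeros, and to justify that the value assigned does not depend on the particular injections $\iota$, $\iota_i$, $\iota_\alpha$. This is precisely what the discussion of $\Sigma^\alpha$ immediately preceding the lemma accomplishes (using \ref{a:commutative}), so no additional argument is needed and the entire proof is essentially formal.
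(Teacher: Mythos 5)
Your proof is correct and takes essentially the same route as the paper: for \ref{kappa-rules:distr1} and \ref{kappa-rules:distr2} both arguments assemble a doubly indexed family encoding the claim (the paper indexes it by $I \times \sup_i \lambda_i$ and pads, you work directly on $\kappa \times \kappa$ via injections) and then invoke \ref{a:flatten}, respectively \ref{a:associative}, together with the padding conventions for $\Sigma^\alpha$, while \ref{kappa-rules:trivial} is read off from the conventions in both. The only cosmetic difference is that the paper first disposes of the trivial case $x=0$ in \ref{kappa-rules:distr1} so that it can count the nonzero entries of the double family, a case split your phrasing (counting the positions where $x$ is placed rather than the nonzero positions) avoids.
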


\begin{proof}
  \begin{proofenumerate}
  \item[\ref{kappa-rules:trivial}] is immediate.

  \item[\ref{kappa-rules:distr1}]
  If $x=0$, then the equality holds trivially. We may assume $x\ne 0$.
  By definition
  \[
    \sum_{i \in I} \lambda_i x = \sum_{i \in I} \sum_{j \in \lambda_i} x.
  \]
  Let $\lambda \coloneqq \sup\{\, \lambda_i : i \in I \,\}$.
  Then $\lambda \le \sum_{i \in I} \lambda_i \le \kappa$.
  We define $x_i=(x_{i,j})_{j \in \lambda} \in H^\lambda$ by $x_{i,j} = x$ if $j \in \lambda_i$ and $x_{i,j}=0$ otherwise.
  Then
  \[
    \sum_{i \in I} \sum_{j \in \lambda_i} x = \sum_{i \in I} \sum_{j \in \lambda} x_{i,j}.
  \]
  Now $\card{ \{\, (i,j) \in I \times \lambda : x_{i,j} \ne 0 \,\}} = \sum_{i \in I} \lambda_i$ by construction.
  By \ref{a:flatten}, it therefore holds that $\sum_{i \in I} \lambda_i x = (\sum_{i \in I} \lambda_i) x$.

  \item[\ref{kappa-rules:distr2}]
  Unwrapping the definitions and applying \ref{a:associative},
  \[
    \begin{split}
      \alpha \sum_{i \in I} x_i = \sum_{j \in \alpha} \sum_{i \in I} x_i = \sum_{i \in I} \sum_{j \in \alpha} x_i = \sum_{i \in I} \alpha x_i. \qedhere
    \end{split}
  \]
  \end{proofenumerate}
\end{proof}

In \ref{e-kappa:trivial} of Example~\ref{e:kappa} we have embedded an arbitrary reduced commutative monoid $M$ in a $\kappa$-monoid.
We can now see that $M$ necessarily has to be reduced, as a variant of the Eilenberg--Mazur swindle is applicable.

\begin{lemma} \label{infinite}
  Let $H$ be a $\kappa$-monoid with $\kappa$ an infinite cardinal. Then
  \begin{enumerate}
  \item \label{infinite:reduced} $H$ is reduced,
  \item \label{infinite:split} $t_1 + t_2 = \kappa t_3$ implies that $t_1 + \kappa t_3 = \kappa t_3$.
  \end{enumerate}
\end{lemma}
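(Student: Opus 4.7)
The plan is to establish part~(2) first, via the scalar multiplication rules of \cref{l:kappa-rules} and elementary cardinal arithmetic, and then deduce part~(1) essentially for free as the instance $t_3 = 0$.

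For part~(2), the idea is to absorb the hypothesis $t_1 + t_2 = \kappa t_3$ into its own $\kappa$-fold multiple. Applying \cref{l:kappa-rules}\ref{kappa-rules:distr2} to the left-hand side gives $\kappa(t_1 + t_2) = \kappa t_1 + \kappa t_2$. For the right-hand side, \cref{l:kappa-rules}\ref{kappa-rules:distr1} (taking $I = \kappa$ and $\lambda_i = \kappa$) together with the cardinal identity $\kappa \cdot \kappa = \kappa$, valid for infinite $\kappa$, yields $\kappa(\kappa t_3) = \kappa t_3$. Thus $\kappa t_1 + \kappa t_2 = \kappa t_3$, and one computes
\[
  t_1 + \kappa t_3 = t_1 + \kappa t_1 + \kappa t_2 = (1+\kappa) t_1 + \kappa t_2 = \kappa t_1 + \kappa t_2 = \kappa t_3,
\]
where the penultimate equality is a further application of \cref{l:kappa-rules}\ref{kappa-rules:distr1} together with $1 + \kappa = \kappa$.

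For part~(1), observe that $\kappa \cdot 0 = 0$: this is immediate from axiom~\ref{a:trivial} applied to the constant zero family. If now $a + b = 0$, then $a + b = \kappa \cdot 0$, and part~(2) applied to $t_1 = a$, $t_2 = b$, $t_3 = 0$ yields $a + \kappa \cdot 0 = \kappa \cdot 0$, i.e.\ $a = 0$. By commutativity \ref{a:commutative}, the roles of $a$ and $b$ are interchangeable, so $b = 0$ as well.

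The whole argument ultimately rests on the two cardinal identities $1 + \kappa = \kappa$ and $\kappa \cdot \kappa = \kappa$ for infinite $\kappa$, which encode the classical Eilenberg--Mazur swindle in abstract form. I expect the only mildly subtle point to be recognizing that~(1) follows from~(2) at $t_3 = 0$; a direct swindle-style proof of~(1) via an alternating doubly-indexed family and two applications of axiom~\ref{a:flatten} is also available, but less economical once the scalar multiplication calculus of \cref{l:kappa-rules} is in hand.
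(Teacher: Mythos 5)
Your proof is correct and rests on the same argument as the paper's: the scalar-multiplication rules of \cref{l:kappa-rules} together with the absorption identities $1+\kappa=\kappa$ and $\kappa\cdot\kappa=\kappa$, i.e.\ the Eilenberg--Mazur swindle. The only (harmless) difference is organizational: the paper proves \ref{infinite:reduced} by a direct parallel computation ($x = x + \kappa(x+y) = \kappa x + \kappa y = 0$), whereas you deduce it from \ref{infinite:split} at $t_3=0$, which is the same underlying calculation factored slightly more economically.
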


\begin{proof}
  We use the properties from Lemma~\ref{l:kappa-rules}.

  \begin{proofenumerate}
  \item[\ref{infinite:reduced}]
  Suppose that $x+y=0$ for $x$,~$y \in H$. Then
  \[
    \begin{split}
      x & = x + 0 = x + \kappa 0 = x + \kappa (x+y) = x + (\kappa x + \kappa y) = (x+ \kappa x) + \kappa y \\
      &=  \kappa x + \kappa  y =   \kappa (x + y) =  0.
    \end{split}
  \]
  By symmetry also $y=0$.

  \item[\ref{infinite:split}]
  We get
  \[
    \begin{split}
      t_1 + \kappa t_3 &= t_1 + \kappa (t_1 + t_2) =  (t_1 + \kappa t_1 ) + \kappa t_2 \\
      &=  \kappa t_1 + \kappa t_2 = \kappa (t_1 + t_2) = \kappa t_3. \qedhere
    \end{split}
  \]
  \end{proofenumerate}
\end{proof}

\subsection{The category of \texorpdfstring{$\kappa$}{κ}-monoids and free objects} The set
\[
  F_{\kappa} \coloneqq \{\, \alpha : \alpha \le \kappa \,\}
\]
of all cardinals at most $\kappa$ is a $\kappa$-monoid (\ref{kappa:cardinals} of Examples~\ref{e:kappa}).
The operations here are particularly simple.
If $(\alpha_i)_{i \in I}$ is a family with finite support and all cardinals being finite, then $\sum_{i \in I} \alpha_i$ is just the usual sum of natural numbers.
However if the family has infinite support or $\alpha_i$ is infinite for at least one $i$, then
\[
  \sum_{i\in I} \alpha_i = \card{I} \sup\{\, \alpha_i : i \in I \,\} = \max\big\{ \card{I},\, \sup\{\, \alpha_i : i \in I \,\} \big\}.
\]

A \defit{$\kappa$-submonoid $H'$} of a $\kappa$-monoid $H$ is a subset $H' \subseteq H$ that contains $0$ and is closed under the restriction of the operation of $H$.
A \defit{homomorphism of $\kappa$-monoids} (in short, a \defit{$\kappa$-homomorphism} or just \defit{homomorphism}) is a map that respects the operation and preserves the neutral element.
The class of $\kappa$-monoids together with homomorphisms forms a category.

Let $B$ be a set.
The Cartesian product $F_{\kappa}^B$ is a $\kappa$-monoid with coordinate-wise addition.
Let $\iota\colon B \hookrightarrow F_\kappa^B$ denote the map of sets that maps $b$ to the family $(x_i)_{i \in B}$ having $x_i=1$ if $i=b$ and $x_i=0$ otherwise.
The following straightforward construction produces free objects in the category of $\kappa$-monoids.

\begin{prop} \label{p:free}
  Let $\kappa$ be an infinite cardinal and let $B$ be a set.
  Let
  \[
    F_{\kappa}(B) \coloneqq \{\, x \in F_\kappa^{B} : \card{\supp(x)} \le \kappa \,\}.
  \]
  Then the following statements hold.
  \begin{enumerate}
  \item \label{free:submonoid} $F_{\kappa}(B)$ is a $\kappa$-submonoid of $F_\kappa^B$.
    \item \label{free:up}  For every $\kappa$-monoid $H$ and every map of sets $f \colon B \to H$, there exists a 
  unique $\kappa$-homomorphism $\overline f \colon F_\kappa(B) \to H$  satisfying $\overline f \circ \iota = f$.
  \end{enumerate}
\end{prop}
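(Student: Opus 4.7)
The plan is to establish (1) by a direct support argument, and then to prove (2) by first isolating a canonical expression for elements of $F_\kappa(B)$ as $\kappa$-sums of scalar multiples of generators, which simultaneously forces the definition of $\overline f$ and gives a recipe for verifying it is a $\kappa$-homomorphism.

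For (1), I note that the $\kappa$-sum on the product $\kappa$-monoid $F_\kappa^B$ is computed coordinate-wise by construction. If $(x^{(i)})_{i \in \kappa}$ is a family in $F_\kappa(B)$ with sum $x$ computed in $F_\kappa^B$, then $\supp(x) \subseteq \bigcup_{i \in \kappa} \supp(x^{(i)})$, a union of at most $\kappa$ sets each of cardinality at most $\kappa$, which therefore has cardinality at most $\kappa \cdot \kappa = \kappa$. Hence $x \in F_\kappa(B)$, and since $0 \in F_\kappa(B)$, this is a $\kappa$-submonoid.

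For (2), the key observation is that every $x \in F_\kappa(B)$ admits a canonical representation
\[
  x = \sum_{b \in \supp(x)} x_b\, \iota(b)
\]
inside $F_\kappa(B)$: the coordinate $x_b$ is a cardinal at most $\kappa$, so the scalar multiple $x_b\,\iota(b)$ is defined; the index set $\supp(x)$ has cardinality at most $\kappa$, so the outer sum is defined; and the equality is checked coordinate-wise. Since any $\kappa$-homomorphism $\overline f$ extending $f$ must preserve $\kappa$-sums, and hence in particular scalar multiplication by cardinals at most $\kappa$, this identity forces $\overline f(x) = \sum_{b \in \supp(x)} x_b f(b)$, giving uniqueness. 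For existence, I will take this forced formula as the definition of $\overline f$ and verify it is a $\kappa$-homomorphism. Given $(y^{(i)})_{i \in \kappa}$ in $F_\kappa(B)$ with pointwise sum $y$, I set $T \coloneqq \bigcup_{i \in \kappa} \supp(y^{(i)}) \cup \supp(y)$, which still has cardinality at most $\kappa$, and extend every sum $\overline f(y^{(i)}) = \sum_{b \in \supp(y^{(i)})} y^{(i)}_b f(b)$ to run over the common index set $T$ by padding with zero terms (harmless by axiom \ref{a:trivial}). Since $y_b = \sum_{i \in \kappa} y^{(i)}_b$ in $F_\kappa$, \cref{l:kappa-rules}\ref{kappa-rules:distr1} gives $y_b f(b) = \sum_{i \in \kappa} y^{(i)}_b f(b)$ for each $b \in T$, and then the associativity/commutativity law \ref{a:associative} of \cref{l:kappa-basic} swaps the double sum indexed by $T \times \kappa$:
\[
  \overline f(y) = \sum_{b \in T} y_b f(b) = \sum_{b \in T} \sum_{i \in \kappa} y^{(i)}_b f(b) = \sum_{i \in \kappa} \sum_{b \in T} y^{(i)}_b f(b) = \sum_{i \in \kappa} \overline f(y^{(i)}).
\]
The base case $\overline f(\iota(b)) = 1 \cdot f(b) = f(b)$ follows from \cref{l:kappa-rules}\ref{kappa-rules:trivial}.

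The only real obstacle is bookkeeping of index sets: one must consistently arrange all the summations so that they run over sets of cardinality at most $\kappa$ (so that they are defined in a $\kappa$-monoid), and ensure that padding by zeros does not alter the value of a $\kappa$-sum. Once the common index set $T$ and the cardinal bound $|T \times \kappa| \le \kappa$ are in place, the proof proceeds mechanically from the distributivity law of \cref{l:kappa-rules} and the swap law \ref{a:associative}, with no further module- or monoid-theoretic input required.
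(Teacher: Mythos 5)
Your proposal is correct and follows essentially the same route as the paper: the same cardinality bookkeeping for the support of a $\kappa$-sum in part (1), and in part (2) the same forced formula $\overline f(x)=\sum_{b}\lambda_b f(b)$, verified to be a $\kappa$-homomorphism via the distributivity law \ref{kappa-rules:distr1} of \cref{l:kappa-rules} and the swap law \ref{a:associative}, with the support bound justifying the double-sum exchange. The only difference is cosmetic: you spell out uniqueness via the canonical representation $x=\sum_{b\in\supp(x)}x_b\,\iota(b)$, which the paper leaves implicit.
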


\begin{proof}
  \begin{proofenumerate}
  \item[\ref{free:submonoid}]
  Let $(x_i)_{i \in \kappa}$ be a family of elements of $F_\kappa(B)$ with $\card{\supp(x_i)}=\beta_i \le \kappa$.
  Then $\supp(\sum_{i \in \kappa} x_i)$ has cardinality at most $\sum_{i \in \kappa} \beta_i$ (this uses the axiom of choice).
  Now $\sum_{i \in \kappa} \beta_i \le \kappa \sup\{\, \beta_i : i \in \kappa \,\} \le \kappa$.

  \item[\ref{free:up}]
  Now we need to verify the universal property.
  For $x \in F_{\kappa}(B)$ with $x = (\lambda_b)_{b \in B}$, we define
  \[
    \overline f(x) \coloneqq \sum_{b \in B} \lambda_b f(b).
  \]
  This is well-defined because $\card{\supp(x)} \le \kappa$, that is, we understand the sum to be taken only over the nonzero elements.
  By definition $\overline f \circ \iota = f$, and it remains to check that $\overline f$ is a $\kappa$-homomorphism.
  
  Let $x=(x_i)_{i \in \kappa}$ be a family of elements of $F_\kappa(B)$, and let $x_i = (\lambda_{i,b})_{b \in B}$ with $\card{\supp(x_i)} \le \kappa$.
  Then
  \[
    \overline f \Big( \sum_{i \in \kappa} x_i\Big) = \sum_{b \in B} \Big( \sum_{i \in \kappa} \lambda_{i,b} \Big) f(b) = \sum_{b \in B} \sum_{i \in \kappa} \lambda_{i,b} f(b)  = \sum_{i \in \kappa} \sum_{b \in B} \lambda_{i,b} f(b) = \sum_{i \in \kappa} \overline f(x_i).
  \]
  For the second equality we used \ref{kappa-rules:distr1} of Lemma~\ref{l:kappa-rules}.
  The third equality uses \ref{a:associative}, taking into account that $\card{ \{ b \in B : \lambda_{i,b} \ne 0 \text{ for some } i \in \kappa \,\} } \le \kappa$. \qedhere
  \end{proofenumerate}
\end{proof}

When $\card{B} \le \kappa$, in particular when $B$ is finite and $\kappa$ is infinite, one has $F_\kappa(B) = F_\kappa^{B}$.

\begin{defi}
  Let $\alpha$ be a \textup(finite or infinite\textup) cardinal with $\alpha \le \kappa$.
  \begin{enumerate}
  \item A $\kappa$-monoid $H$ is \defit{$\alpha$-generated} if there exists a set $B$ of cardinality $\alpha$ and a surjective homomorphism $F_\kappa(B) \to H$.
  \item A $\kappa$-monoid $H$ is \defit{cyclic} if it is $1$-generated, that is, if it is an image of $F_\kappa$ under a $\kappa$-homomorphism.
  \end{enumerate}

\end{defi}

Equivalently, a $\kappa$-monoid is $\alpha$-generated, if there exists a subset $X \subseteq H$ of cardinality at most $\alpha$, such that every element of $H$ can be expressed as a $\kappa$-sum in $X$.
In this case we write $H = \langle X \rangle_\kappa$.
One has to be mindful of the cardinal $\kappa$ in this context.
For instance, the $\aleph_1$-monoid $F_{\aleph_1}$ is cyclic (generated by the cardinal $1$) as $\aleph_1$-monoid, but it is not cyclic as $\aleph_0$-monoid.

\subsection{Order-units}

\begin{defi}
  Let $H$ be a $\kappa$-monoid.
  For $x$,~$y \in H$, let $x \le y$ if there exists $x' \in H$ such that $x+x'=y$.
\end{defi}

The relation $\le$ is reflexive and transitive, and therefore a preorder on $H$, but it is not necessarily anti-symmetric.
For instance, consider the monoid $H=\bN_0/\!\sim$ where $n \sim m$ if and only if $m \equiv n \mod 2$. Then $H = \{ \overline 0, \overline 1\}$, with $\overline 0 + \overline 1 = \overline 1$ and $\overline 1 + \overline 1 = \overline 0$, so that $\overline 0 \le \overline 1$ and $\overline 1 \le \overline 0$, while $\overline 0 \ne \overline 1$.

If $H$ is a commutative monoid, then an element $u \in H$ is called an \defit{order-unit} if for every $x \in H$ there exists some $n \in \bN_0$ such that $x \le nu$.
In monoids of finitely generated projective modules, that is monoids of the form $\Vmon(R)$, the isomorphism class of $[R]$ is an order-unit (though it may not be unique, every progenerator gives rise to an order-unit).
In the $\kappa$-monoids $\Vmon^\kappa(R)$, the isomorphism class of $[R]$ (or more generally, any progenerator) will play a similar distinguished role, and so we are led to the following definition.

\begin{defi}
  Let $H$ be a $\kappa$-monoid with $\kappa$ an infinite cardinal.
  \begin{enumerate}
  \item An element $u \in H$ is an \defit{order-unit} if for all $x \in H$, one has $x \le \kappa u$.
  \item An order-unit $u \in H$ is \defit{faithful} if $\beta u \not\le \alpha u$ for all cardinals $\alpha < \beta \le \kappa$ with $\beta$ infinite.
  \end{enumerate}
\end{defi}

Let $H$ be a $\kappa$-monoid with order-unit $u$.
For an element $x \in H$ we define $\size(x)$ to be the minimal cardinal $\alpha$ such that $x \le (\alpha +n)u$ for some $n \in \bN_0$.
In other words, $\size(x) = 0$ if $x \le nu$ for some $n \in \bN_0$ and otherwise $\size(x)$ is the smallest (infinite) cardinal $\alpha$ such that $x \le \alpha u$.
Alternatively, we may think of $\size$ as taking values in cardinals modulo finite cardinals, that is, identifying $\alpha \le \beta$ if there exists $n \in \bN_0$ such that $\alpha+n=\beta$.
We must keep in mind that the definition of $\size$ depends on the choice of order-unit.

Let $H_\alpha \coloneqq \{\, x \in H : \size(x) \le \alpha \,\}$.
Then $H_\alpha$ is an $\alpha$-submonoid of $H$ if $\alpha$ is infinite, and a submonoid if $\alpha=0$.
We obtain a filtration
\[
  H_0 \subseteq H_{\aleph_0} \subseteq H_{\aleph_1} \subseteq \cdots
\]
The order-unit $u$ is faithful if and only if all the inclusions in this filtration are proper, that is, if and only if $H_\alpha \subsetneq H_\beta$ whenever $\alpha < \beta \le \kappa$ and $\beta$ is infinite.

\begin{example}
  Let $H = V^{\kappa}(R)$.
  Then $[R]$ is an order-unit.
  If $I$, $J$ are infinite sets and $I'$ is a set (finite or infinite) such that $R_R^{(I)} \oplus R_R^{(I')} \cong R_R^{(J)}$, then $\card{I} \le \card{J}$, because the rank of an infinite rank free module is uniquely determined.
  Thus $[R]$ is a faithful order-unit.
  The order-unit $[R]$ is not unique: If $P$ is a progenerator, then $[P]$ is also a faithful order-unit.
  
  For the filtration above, we get $H_0 = \Vmon(R)$, $H_{\aleph_0} = \Vmon^{\aleph_0}(R)$, and more generally $H_\alpha = V^{\alpha}(R)$ for all infinite cardinals $\alpha \le \kappa$.
  By a theorem of Kaplansky every projective module is a direct sum of countably generated projective modules \cite{Kaplansky58}.
  Thus, $H$ is generated as a $\kappa$-monoid by the countable subset $H_{\aleph_0}$ of isomorphism classes of countably generated projective modules.
\end{example}

For later use we make note of the following easy fact.

\begin{lemma} \label{sumwithbig}
  Let $\kappa$ be an infinite cardinal and suppose $H$ is a $\kappa$-monoid with order-unit $u$.
  If $t = \kappa u +l $ for some $l \in H$, then $t = \kappa u$.
\end{lemma}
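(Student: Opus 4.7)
My plan is to derive this as an almost immediate consequence of Lemma~\ref{infinite}\ref{infinite:split}. The hypothesis provides the sum $t = \kappa u + l$, so what we need to establish is the equality $\kappa u + l = \kappa u$. This form is exactly the conclusion of the splitting property, applied to the order-unit $u$ playing the role of $t_3$.

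The first step is to use the assumption that $u$ is an order-unit: by definition this gives $l \le \kappa u$, so there exists $l' \in H$ with $l + l' = \kappa u$. Now Lemma~\ref{infinite}\ref{infinite:split}, applied with $t_1 = l$, $t_2 = l'$, and $t_3 = u$, yields $l + \kappa u = \kappa u$. Combining with commutativity (\ref{a:commutative} from Lemma~\ref{l:kappa-basic}), we conclude
\[
  t \;=\; \kappa u + l \;=\; l + \kappa u \;=\; \kappa u,
\]
as required.

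Since the splitting lemma has already done the heavy lifting (internally via the Eilenberg--Mazur-style argument used in its own proof), there is no genuine obstacle here: the only substantive observation needed is that the order-unit hypothesis is exactly what puts us in a position to invoke \ref{infinite:split}. If one preferred a self-contained argument, one could instead unpack the swindle directly by writing $\aleph_0(l+l') = \aleph_0 l + \aleph_0 l' = \kappa u$ (using \ref{kappa-rules:distr2} together with $\aleph_0 \cdot \kappa = \kappa$), and then absorbing $l$ via $l + \aleph_0 l = \aleph_0 l$, but this merely duplicates the computation already carried out in Lemma~\ref{infinite}.
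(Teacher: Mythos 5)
Your proof is correct and is essentially the paper's own argument: both obtain $l' \in H$ with $l + l' = \kappa u$ from the order-unit property and then invoke Lemma~\ref{infinite}\ref{infinite:split} (with $t_3 = u$) to get $l + \kappa u = \kappa u$, hence $t = \kappa u$. No substantive differences.
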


\begin{proof}
  There exists $l' \in H$ such that $l + l' = \kappa u$.
  Then \ref{infinite:split} of Lemma \ref{infinite} implies $l + \kappa u = \kappa u$.
  Therefore $t = \kappa u$.
\end{proof}

\subsection{Basic realization results}

We shall later be concerned with the realization of $\kappa$-monoids as $\Vmon^{\kappa}(R)$ for hereditary rings.
It is illustrative, to familiarize ourselves with the notions, to first consider two very easy realization results of this style.

\subsubsection{Monoids of free modules} \label{ss:free-modules}
It is well-known that noncommutative rings do not necessarily have invariant basis number (IBN).
That it is, over a noncommutative ring it may happen that $R_R^m \cong R_R^n$ for $m \ne n$.
However, for free modules of infinite rank that rank is uniquely determined.
Let $\mathcal F$ denote the class of all finitely generated free $R$-modules.
Then $\Vmon(\mathcal F)$ is a cyclic monoid, generated by $[R]$.

Every cyclic monoid is isomorphic to either $\bN_0$ or to $C_{m,n}\coloneqq\bN_0/\!\sim_{m,n}$ where $m \ge 0$, $n \ge 1$ and the congruence relation $\sim_{m,n}$ is given by $k \sim_{m,n} l$ if and only if $\abs{k-l}=n$ and $\min\{k,l\} \ge m$.
Thus we may view $C_{m,n}$ as $m+n$ element set
\[
   \{ \overline 0, \overline 1, \ldots, \overline m, \overline{m+1}, \dots, \overline{m+(n-1)} \},
 \]
with $\overline{l+kn}$ identified with $\overline{l}$ whenever $l \ge m$ and $k \ge 0$.
Note that $\overline{1}$ is an order-unit.

A classical result of Leavitt shows that for every cyclic monoid $C$, there exists a ring $R$ with $\Vmon(\mathcal F) \cong C$ \cite{Leavitt62}.
Let us now consider the $\kappa$-monoid $\Vmon^\kappa(\mathcal F^\kappa)$ where $\mathcal F^\kappa$ is the class of all free $R$-modules generated by at most $\kappa$ elements.
As the rank of an infinite rank free module is always uniquely determined, whereas we can easily construct cyclic $\kappa$-monoids in which some intervals of cardinals are identified, it is clear that we cannot expect every cyclic $\kappa$-monoid to appear as $\Vmon^{\kappa}(\mathcal F^\kappa)$.
The generator of a cyclic $\kappa$-monoid is always an order-unit; the additional assumption for the generator to be a \emph{faithful} order-unit fixes the realization issue.

\begin{lemma}
  Let $\kappa$ be an infinite cardinal.
  Let $C$ be a cyclic $\kappa$-monoid generated by a faithful order-unit.
  Then $C$ is isomorphic to
  \[
    C_{0} \ \uplus \{\, \alpha : \alpha \text{ a cardinal with } \aleph_0 \le \alpha \le \kappa  \,\},
  \]
  with $C_0$ the cyclic submonoid of size zero elements, and the obvious operation.
\end{lemma}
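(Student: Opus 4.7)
The plan is to read the structure of $C$ off the faithful order-unit $u$ that generates it, and then identify the arithmetic with the ``obvious'' one via the distributive rule \ref{kappa-rules:distr1}.

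First I would observe that because $C = \langle u \rangle_\kappa$, the unique $\kappa$-homomorphism $\pi \colon F_\kappa \to C$ sending $1 \mapsto u$, afforded by \cref{p:free}, is surjective; so every element of $C$ has the form $\alpha u$ for some cardinal $\alpha \le \kappa$. I would then use faithfulness of $u$ to compute $\size$: for $\alpha$ finite, clearly $\alpha u \le \alpha u$ gives $\size(\alpha u)=0$; for $\alpha$ infinite, the inequality $\alpha u \le \alpha u$ yields $\size(\alpha u) \le \alpha$, while any strict inequality $\size(\alpha u)=\beta<\alpha$ (whether $\beta=0$ so $\alpha u \le n u$ for some $n \in \bN_0$, or $\beta$ is infinite) would contradict the faithfulness hypothesis $\alpha u \not\le \beta u$ (resp.\ $\alpha u \not\le n u$). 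Hence $\size(\alpha u)=\alpha$ for infinite $\alpha$, and in particular the elements $\{\alpha u : \aleph_0 \le \alpha \le \kappa\}$ are pairwise distinct and disjoint from $C_0$.

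Next I would identify the two pieces. By the previous paragraph $C_0 = \{\, n u : n \in \bN_0 \,\}$, which is an (ordinary) commutative submonoid, generated by $u$ as a monoid; so $C_0$ is a cyclic monoid in the classical sense. Moreover, the computation of $\size$ shows that the surjection $\pi$ restricts to a bijection $\{\,\alpha : \aleph_0 \le \alpha \le \kappa\,\} \to \{\alpha u : \aleph_0 \le \alpha \le \kappa\}$, and that these infinite-size elements are disjoint from $C_0$. Thus as a set
\[
  C \;=\; C_0 \,\uplus\, \{\, \alpha u : \aleph_0 \le \alpha \le \kappa \,\}.
\]

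Finally, I would verify that the $\kappa$-monoid operation matches the obvious one on the right-hand side. By \ref{kappa-rules:distr1}, for any family of cardinals $(\alpha_i)_{i \in I}$ with $\card{I} \le \kappa$ and $\alpha_i \le \kappa$, one has $\sum_{i \in I}\alpha_i u = \bigl(\sum_{i \in I} \alpha_i\bigr) u$; applied to families lying in the two pieces this gives $nu + \alpha u = \alpha u$ when $n \in \bN_0$ and $\alpha \ge \aleph_0$, $\alpha u + \beta u = \max(\alpha,\beta)\,u$ when both are infinite, and in general $\sum_{i\in I} \alpha_i u = \max\bigl(\card{\supp(\alpha_i)},\sup_i \alpha_i\bigr)\,u$ as soon as the family is not finitely supported by elements of $C_0$. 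This is exactly the $\kappa$-operation on $C_0 \uplus \{\,\alpha : \aleph_0 \le \alpha \le \kappa\,\}$ inherited from cardinal arithmetic in $F_\kappa$, so the partition map is a $\kappa$-isomorphism. The one step requiring minor care is the bookkeeping distinguishing the ``all summands in $C_0$, finite support'' case from every other case; aside from that, the argument is direct.
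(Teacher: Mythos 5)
Your proof is correct and takes essentially the same route as the paper's: every element of $C$ is of the form $\alpha u$, faithfulness of $u$ forces $\alpha u \ne \beta u$ for distinct infinite cardinals and keeps these elements out of $C_0$, and the isomorphism is $\alpha u \mapsto \alpha$ (identity on $C_0$). Your explicit computation $\size(\alpha u)=\alpha$ for infinite $\alpha$ and the verification of the operation via the distributivity rule of \cref{l:kappa-rules} merely spell out details the paper's proof leaves implicit.
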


\begin{proof}
  Let $u$ be a faithful order-unit that generates $C$ as $\kappa$-monoid.
  By faithfulness of $u$, necessarily $u \ne 0$.
  The submonoid $C_0 = \{\, nu : n \in \bN_0 \,\}$ of size zero elements is a cyclic monoid.
  Faithfulness of $u$ implies that $\alpha u \ne \beta u$ for infinite cardinals $\alpha \ne \beta$, and moreover that $\{\, \alpha u : \aleph_0 \le \alpha \le \kappa \,\}$ is disjoint from $C_0$.
  The claimed isomorphism is therefore given by $\alpha u \mapsto \alpha$.
\end{proof}

\begin{prop}
  Let $\kappa$ be an infinite cardinal and let $C$ be a cyclic $\kappa$-monoid generated by a faithful order-unit.
  Then there exists a ring $R$ such that its class of $\kappa$-generated free modules $\mathcal F^\kappa$ satisfies $\Vmon^\kappa(\mathcal F^\kappa) \cong C$.
\end{prop}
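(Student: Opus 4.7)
My plan is to reduce the realization to its ``finite part'' via the preceding lemma and then invoke Leavitt's classical realization theorem \cite{Leavitt62}.

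By the preceding lemma, $C \cong C_0 \uplus \{\, \alpha : \aleph_0 \le \alpha \le \kappa \,\}$, where $C_0 = \{\, nu : n \in \bN_0 \,\}$ is the size-zero submonoid, which is a nonzero cyclic (ordinary) monoid since the faithful order-unit $u$ is nonzero. By the Leavitt theorem cited just before the lemma, every cyclic monoid is realized as $\Vmon(\mathcal F)$ for some ring, so I would fix a nonzero ring $R$ together with a monoid isomorphism $\Vmon(\mathcal F) \cong C_0$ sending $[R]$ to $u$, and then prove $\Vmon^{\kappa}(\mathcal F^{\kappa}) \cong C$ for this~$R$.

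Now $\Vmon^{\kappa}(\mathcal F^{\kappa})$ is cyclic as a $\kappa$-monoid, generated by $[R]$, since any $\kappa$-generated free module has the form $R^{(I)}$ with $\card{I} \le \kappa$ and $[R^{(I)}] = \card{I}\cdot [R]$, and $[R]$ is trivially an order-unit. The main obstacle is establishing that $[R]$ is \emph{faithful}. For this I would suppose, towards a contradiction, that $\beta[R] \le \alpha[R]$ for some cardinals $\alpha < \beta \le \kappa$ with $\beta$ infinite. This would yield $R^{(\beta)} \oplus R^{(I)} \cong R^{(\alpha)}$ for some $I$ with $\card{I} \le \kappa$, so a free module of rank at least $\beta$ (hence infinite) would be isomorphic to $R^{(\alpha)}$. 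The classical fact that a nonzero ring admits no free module that is simultaneously finitely generated and of infinite rank rules out the case of finite $\alpha$, while the uniqueness of the rank of an infinite-rank free module forces $\max(\beta, \card{I}) = \alpha$ when $\alpha$ is infinite---each contradicting $\beta > \alpha$.

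With $[R]$ established as a faithful order-unit, applying the preceding lemma to $\Vmon^{\kappa}(\mathcal F^{\kappa})$ yields $\Vmon^{\kappa}(\mathcal F^{\kappa}) \cong H_0 \uplus \{\, \alpha : \aleph_0 \le \alpha \le \kappa \,\}$, where $H_0$ denotes its size-zero submonoid. Identifying $H_0 = \Vmon(\mathcal F)$ is then routine: if $x \le n[R]$ for some $n \in \bN_0$, then the corresponding free module is a direct summand of $R^n$, hence finitely generated, hence (because $R \ne 0$) of finite rank; the reverse inclusion is trivial. Since our choice of $R$ gives $H_0 \cong C_0$, comparing the two $\uplus$-descriptions yields $\Vmon^{\kappa}(\mathcal F^{\kappa}) \cong C$, as required.
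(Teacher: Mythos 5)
Your proposal is correct and follows essentially the same route as the paper: realize the size-zero cyclic submonoid $C_0$ via Leavitt's theorem, observe that $\Vmon^\kappa(\mathcal F^\kappa)$ is a cyclic $\kappa$-monoid generated by the faithful order-unit $[R]$ (faithfulness coming from the uniqueness of the rank of infinite-rank free modules, which the paper records just before the lemma), and then invoke the preceding structure lemma to match the two decompositions. You merely spell out explicitly the faithfulness check and the identification of the size-zero submonoid with $\Vmon(\mathcal F)$, which the paper leaves implicit.
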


\begin{proof}
  Let $C_0 \subseteq C$ be the submonoid of size zero elements.
  By Leavitt's theorem \cite{Leavitt62}, there exists a ring $R$ such that $\Vmon(\mathcal F) \cong C_0$ as monoids.
  Since $\Vmon^\kappa(\mathcal F^\kappa)$ and $C$ are both cyclic $\kappa$-monoids generated by faithful order-units, and whose size zero submonoids are isomorphic, the previous lemma implies $\Vmon^\kappa(\mathcal F^\kappa) \cong C$ as $\kappa$-monoids.
\end{proof}

Thus, in a trivial extension of Leavitt's result, the $\kappa$-monoids having realization as $\kappa$-monoids of free modules are precisely the cyclic $\kappa$-monoids generated by a faithful order-unit.

\subsubsection{Semisimple rings}

If $R$ is a semisimple ring, it has a finite number of isomorphism classes of simple modules.
Every module is a direct sum of simple modules, and the decomposition is unique up to order and isomorphism of the factors.
Moreover, for every integer $k \ge 0$ we can find a semisimple ring with precisely $k$ distinct isomorphism classes of simple modules.
Hence semisimple rings realize precisely the finitely generated free $\kappa$-monoids as their $\kappa$-monoids of projective modules. So we have the following.

\begin{prop}
  \mbox{}
  \begin{enumerate}
  \item If $R$ is a semisimple ring with $n \ge 0$ isomorphism classes of simple modules, then $\Vmon^{\kappa}(R) \cong F_\kappa^n$.
  \item For every $n \ge 0$ there exists a semisimple ring $R$ with $\Vmon^{\kappa}(R) \cong F_\kappa^n$.
  \end{enumerate}
\end{prop}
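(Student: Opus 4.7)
The plan is to build a natural $\kappa$-isomorphism $\Vmon^{\kappa}(R) \cong F_{\kappa}^{n}$ by exploiting the Wedderburn--Artin structure of a semisimple ring, and then to realize each $n \ge 0$ by an explicit ring.

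For part (1), let $S_1, \ldots, S_n$ be a complete set of representatives of the isomorphism classes of simple right $R$-modules. Since $R$ is semisimple, every right $R$-module is semisimple and in particular projective. A projective module $P$ generated by at most $\kappa$ elements therefore admits a decomposition $P \cong \bigoplus_{i=1}^n S_i^{(\alpha_i)}$ that is unique up to isomorphism, with $\alpha_i$ cardinals satisfying $\sum_{i=1}^n \alpha_i \le \kappa$, so each $\alpha_i \in F_{\kappa}$. I define $\phi\colon \Vmon^{\kappa}(R) \to F_{\kappa}^{n}$ by $\phi([P]) = (\alpha_1, \ldots, \alpha_n)$; this is well-defined by the uniqueness of the decomposition, and injective for the same reason. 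Surjectivity is immediate: given $(\alpha_1, \ldots, \alpha_n) \in F_{\kappa}^{n}$, the module $\bigoplus_{i=1}^n S_i^{(\alpha_i)}$ is generated by at most $\sum_{i=1}^n \alpha_i = \max\{\alpha_i\} \le \kappa$ elements (cf.\ the arithmetic of $F_{\kappa}$ described earlier), and so represents a preimage.

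To see $\phi$ is a $\kappa$-homomorphism, I take a family $([P_j])_{j \in \kappa}$, write each $P_j \cong \bigoplus_{i=1}^n S_i^{(\alpha_{j,i})}$, and then regroup the double direct sum:
\[
  \bigoplus_{j \in \kappa} P_j \;\cong\; \bigoplus_{j \in \kappa} \bigoplus_{i=1}^n S_i^{(\alpha_{j,i})} \;\cong\; \bigoplus_{i=1}^n S_i^{(\sum_{j \in \kappa} \alpha_{j,i})}.
\]
Applying $\phi$ yields the family $(\sum_{j \in \kappa} \alpha_{j,i})_{i=1}^n$, which is precisely the componentwise $\kappa$-sum of the $\phi([P_j])$ in $F_{\kappa}^{n}$. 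Here the regrouping is the module-theoretic manifestation of axiom \ref{a:flatten}, together with the associativity \ref{a:associative}; this is the only place where the $\kappa$-monoid structure (as opposed to the underlying commutative monoid structure) really enters, and is the one step that needs care.

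For part (2), for $n=0$ I take $R$ to be the zero ring, which has no simple modules and only the zero module, so $\Vmon^{\kappa}(R) = \{0\} = F_{\kappa}^{0}$. For $n \ge 1$, I take $R = k^n$ for any field $k$ (for instance $k = \mathbb Q$); this ring is semisimple with exactly $n$ isomorphism classes of simple modules, obtained by letting $k^n$ act on $k$ via the $n$ coordinate projections. Part (1) then gives $\Vmon^{\kappa}(R) \cong F_{\kappa}^{n}$. No deep obstacle is expected: the whole argument rests on the standard Wedderburn--Artin classification, and the only point requiring attention is the infinitary associativity needed to promote the obvious monoid isomorphism to a $\kappa$-monoid isomorphism.
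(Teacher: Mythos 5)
Your proof is correct and takes essentially the same route as the paper, which only sketches the argument: the uniqueness of the decomposition of a semisimple module into simples (with cardinal multiplicities) identifies $\Vmon^{\kappa}(R)$ with $F_\kappa^n$, and products of fields (resp.\ the zero ring) realize every $n$. One cosmetic slip: $\sum_{i=1}^n \alpha_i = \max_i\{\alpha_i\}$ only when some $\alpha_i$ is infinite, but in every case the sum is $\le \kappa$, which is all your surjectivity step needs.
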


\subsection{\texorpdfstring{$\kappa^-$}{κ}-monoids} \label{subsec:kappaminus-monoids}

In the following section we also have need for a slight variation on the definition of a $\kappa$-monoid.
Recall that an infinite cardinal $\kappa$ is \defit{regular}, if for every family of sets $(S_i)_{i \in I}$ where $\card{I} < \kappa$ and $\card{S_i} < \kappa$, it holds that
\[
  \card[\Big]{\bigcup_{i\in I}S_i} < \kappa.
\]
For a set $S$ we denote by $S^{(\kappa)}$ the families indexed by $\kappa$ whose support has cardinality strictly less than $\kappa$.

\begin{defi}
  Let $\kappa$ be a regular cardinal.
  A $\kappa^-$-monoid is a set $H$ together with an element $0 \in H$ and a map $\Sigma \colon H^{(\kappa)} \to H$ such that the following conditions are satisfied.
  \begin{enumerate}[label=\textup{(B\arabic*)},leftmargin=1.2cm]
  \item \label{b:trivial} If $x=(x_i)_{i \in \kappa} \in H^{(\kappa)}$ with $x_i=0$ for all $i \ne 0$, then $\sum_{i \in \kappa}x_i=x_0$.
  \item \label{b:flatten}
    If $(x_{i,j})_{i \in \kappa, j \in \kappa} \in H^{\kappa \times \kappa}$ is a family such that
    \begin{itemize}
    \item there are strictly less than $\kappa$ many $i \in \kappa$ for which there exists $j \in \kappa$ with $x_{i,j} \ne 0$,
    \item there are strictly less than $\kappa$ many $j \in \kappa$ for which there exists $i \in \kappa$ with $x_{i,j} \ne 0$, 
    \end{itemize}
    and $\pi \colon \kappa \times \kappa \to \kappa$ is a bijection, then
    \[
      {\sum_{i \in \kappa}} {\sum_{j \in \kappa}} x_{i,j} = {\sum_{k \in \kappa}} x_{\pi^{-1}(k)}.
    \]
  \end{enumerate}
\end{defi}

The analogues of \cref{l:kappa-basic,l:kappa-rules,infinite} hold true (in \cref{l:kappa-rules} the cardinals must be taken to be strictly less than $\kappa$; in \ref{infinite:split} of \cref{infinite} the cardinal $\kappa$ may be replaced by any cardinal strictly less than $\kappa$).
A $\kappa^-$-homomorphism between $\kappa^-$-monoids is a map respecting the neutral element and the operation.
The set $F_{\kappa^-} = \{\, \lambda : \lambda < \kappa \,\}$ forms a $\kappa^-$-monoid.
Then the analogue of \cref{p:free} holds, so that $F_{\kappa^-}(B)$ is the free $\kappa^-$-monoid on the basis $B$.
An $\aleph_0^-$-monoid is simply a monoid.

Let $\cC$ be a class of modules whose isomorphism classes form a set.
If $\cC$ is closed under isomorphisms and direct sums on index sets of cardinality strictly less than $\lambda$, we define the $\lambda^-$-monoid of modules $\Vmon^{\lambda^-}(\cC)$, analogous to \cref{def:vmon}.

\begin{remark}
  To relate the two notions, note that every $\kappa$-monoid is a $(\kappa^+)^{-}$-monoid, where $\kappa^+$ denotes the successor cardinal of $\kappa$. (Since we have assumed the axiom of choice, every successor cardinal is regular.)

  Conversely, a $\kappa^-$-monoid is a $\lambda$-monoid for all cardinals $\lambda < \kappa$ by restricting the operation, and these $\lambda$-monoid structures are compatible in the natural way.
  On the other hand, given $\lambda$-monoid structures on $H$ for each $\lambda < \kappa$ that are compatible, we can define a $\kappa^-$-monoid structure on $H$.
  The only difficulty is \ref{b:flatten}.
  However, the assumption that $\kappa$ is regular implies that for every family of cardinals $\lambda_i < \kappa$ indexed by a set $I$ of cardinality strictly less than $\kappa$, we have $\sum_{i \in I} \lambda_i = \lambda' < \kappa$.
  Thus, any sum occurring in \ref{b:flatten} can also be expressed in a $\lambda'$-monoid for $\lambda' < \kappa$ sufficiently large.

  Having available the notion of $\kappa^-$-monoids nevertheless is notationally convenient, as it allows us to avoid dealing with a whole family of structures.
\end{remark}

\section{Braiding and universal \texorpdfstring{$\kappa$}{κ}-extensions} \label{sec:braiding}

Throughout the section, let $\lambda$ and $\kappa$ be infinite cardinals (they may be the same or different).
Let $\kappa^+$ denote the successor cardinal of $\kappa$.
We assume throughout that $\lambda$ is regular, as we will be considering $\lambda^-$-monoids.
Recall that we use the notation $\langle X \rangle_\kappa$ to denote the $\kappa$-submonoid of $H$ generated by a subset $X$ of $H$.

While our sums are indexed by the cardinal $\kappa$, or more generally an index set of cardinality $\kappa$, in this section we will often have need to fix a well-order on the index set.
This well-order will always be an limit well-order, that is, every element $i \in \kappa$ must have a successor that we denote by $i+1$ by a slight abuse of notation.
Elements which are not successors are called limit elements (we consider the minimal element, denoted by $0$, to be a limit element).

An \defit{indexed partition} of the set $\kappa$ is a family of pairwise disjoint sets $(I_\mu)_{\mu \in \kappa}$ such that $\kappa = \bigcup_{\mu \in \kappa} I_\mu$.
Some of the sets $I_\mu$ may be empty.

\begin{defi} \label{def:braiding}
  Fix a limit well-order on $\kappa$.
  Let $X$ be a $\lambda^-$-monoid.
  \begin{enumerate}
  \item Two families $(x_i)_{i \in \kappa}$ and $(y_j)_{j \in \kappa}$ in $X$ are \defit{$\lambda^-$-braided} if there exist indexed partitions $(I_{\mu})_{\mu \in \kappa}$ and $(J_\mu)_{\mu \in \kappa}$ of $\kappa$ with $\card{I_\mu}$,~$\card{J_\mu} < \lambda$, and families $(u_\mu)_{\mu \in \kappa}$ and $(v_\mu)_{\mu \in \kappa}$ in $X$ such that $v_\mu=0$ whenever $\mu$ is a limit element, and 
    \[
      \sum_{i \in I_\mu} x_i = v_\mu + u_\mu,
      \qquad\text{and}\qquad
      \sum_{j \in J_\mu} y_j = v_{\mu+1} + u_{\mu} \qquad \text{for all } \mu \in \kappa.
    \]
  \item Suppose $\lambda \le \kappa^+$.
    Let $H$ be a $\kappa$-monoid and $X$ a $\lambda^-$-submonoid of $H$.
    Then $H$ is \defit{$\lambda^-$-braided over $X$} if $H = \langle X \rangle_\kappa$, and all families $(x_i)_{i \in \kappa}$, $(y_j)_{j \in \kappa}$ in $X$ with $\sum_{i \in \kappa} x_i=\sum_{j \in \kappa} y_j$ are $\lambda^-$-braided.
  \end{enumerate}
\end{defi}

\begin{figure}
  \begin{center}
    \includegraphics{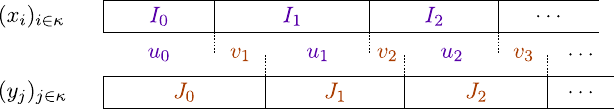}
  \end{center}
  \caption{Illustration of a $\lambda^-$-braiding of two families (\cref{def:braiding}). While the figure only indicates the part of the braiding with finite indices, a general braiding breaks down into a disjoint union of such countable ones (\ref{coll:layers} and \ref{coll:layers-rev} of \cref{l:braiding-collapse}).
    We may think of the sets $I_i$ and $J_j$ as intervals, as the order of the elements in the family does not matter (\cref{l:braiding-basic}).}
\end{figure}

With notation as in the definition, we call $(I_{\mu})_{\mu \in \kappa}$ and $(J_\mu)_{\mu \in \kappa}$ the corresponding \defit{braiding partitions}, and $(u_\mu)_{\mu \in \kappa}$ and $(v_\mu)_{\mu \in \kappa}$ the \defit{braiding families}.
We will later see that the notion does not depend on the choice of well-order on $\kappa$ (\cref{l:braided-no-order}), so it is sensible to index the families by cardinals and not by ordinals.
This is also the reason for suppressing the well-order in the notation.
After establishing the independence from the well-order, it will also make sense to index the families by arbitrary sets of cardinality $\kappa$ and not just by $\kappa$ itself.
With some more work, we will see that braiding is an equivalence relation on families indexed by $\kappa$ (\cref{l:braiding-transitive}).

The case $\lambda=\aleph_0^-$, in which the sets $I_\mu$ and $J_\mu$ are finite, will turn out to be the most interesting one (due to \ref{coll:uncountable} of \cref{l:braiding-collapse}). We therefore say that two families are \defit{braided} if they are $\aleph_0^-$-braided, and similarly that $H$ is \defit{braided over $X$} if it is $\aleph_0^-$-braided over $X$.

A telescoping argument shows that $\lambda^-$-braided families have equal $\kappa$-sums.

\begin{lemma} \label{l:telescope}
  Let $H$ be a $\kappa$-monoid and $\lambda \le \kappa^+$.
  If $(x_i)_{i \in \kappa}$ and $(y_j)_{j \in \kappa}$ are families in $H$ that are $\lambda^-$-braided, then
  \[
    \sum_{i \in \kappa} x_i = \sum_{j \in \kappa} y_j.
  \]
\end{lemma}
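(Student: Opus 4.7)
My plan is to unfold both $\sum_{i \in \kappa} x_i$ and $\sum_{j \in \kappa} y_j$ using the braiding data, and then observe that they differ only by a harmless reindexing of the family $(v_\mu)$ which is absorbed by the axioms. Since $\card{I_\mu} < \lambda \le \kappa^+$, we have $\card{I_\mu} \le \kappa$, so each internal sum $\sum_{i \in I_\mu} x_i$ makes sense in the $\kappa$-monoid after padding with zeros via \ref{a:trivial}. Using the flattening/associativity axiom \ref{a:flatten} (and \ref{a:associative}) together with the fact that $(I_\mu)_{\mu \in \kappa}$ partitions $\kappa$, I would rewrite
\[
  \sum_{i \in \kappa} x_i \;=\; \sum_{\mu \in \kappa} \sum_{i \in I_\mu} x_i \;=\; \sum_{\mu \in \kappa} (v_\mu + u_\mu),
\]
and analogously, using the partition $(J_\mu)_{\mu \in \kappa}$,
\[
  \sum_{j \in \kappa} y_j \;=\; \sum_{\mu \in \kappa} (v_{\mu+1} + u_\mu).
\]

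Next, I would split each of these aggregated sums into a sum of $v$'s and a sum of $u$'s. Concretely, encoding $v_\mu + u_\mu$ as an inner $\kappa$-sum $\sum_{k \in \kappa} w_{\mu,k}$ with $w_{\mu,0} = v_\mu$, $w_{\mu,1} = u_\mu$, and $w_{\mu,k}=0$ otherwise, another application of \ref{a:flatten}/\ref{a:associative} yields
\[
  \sum_{\mu \in \kappa} (v_\mu + u_\mu) \;=\; \sum_{\mu \in \kappa} v_\mu + \sum_{\mu \in \kappa} u_\mu,
\]
and likewise for the $(v_{\mu+1} + u_\mu)$ family. Thus the identity reduces to checking that $\sum_{\mu \in \kappa} v_\mu = \sum_{\mu \in \kappa} v_{\mu+1}$.

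For this last step, I would exploit that $v_\mu = 0$ whenever $\mu$ is a limit element of $\kappa$. The map $\mu \mapsto \mu+1$ is an injection of $\kappa$ onto the set $S \subseteq \kappa$ of successor elements, and $\card{S} = \kappa$ since $\kappa$ is infinite. Choose any bijection $\sigma \colon \kappa \to \kappa$ restricting to this injection on its image (the remainder being matched arbitrarily). The family $(v_{\sigma(\mu)})_{\mu \in \kappa}$ agrees with $(v_{\mu+1})_{\mu \in \kappa}$ on the relevant support and differs only by extra zero entries, so \ref{a:trivial} (padding) and the commutativity axiom \ref{a:commutative} give $\sum_\mu v_\mu = \sum_\mu v_{\sigma(\mu)} = \sum_\mu v_{\mu+1}$, completing the argument.

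There is no real obstacle here: the whole argument is a careful bookkeeping exercise in the $\kappa$-monoid axioms established in \cref{l:kappa-basic} and \cref{l:kappa-rules}. The only point requiring a moment of care is the last reindexing, where one must remember that the sums are purely commutative and that the limit-element hypothesis on $(v_\mu)$ is precisely what allows the shift $\mu \mapsto \mu+1$ to be absorbed.
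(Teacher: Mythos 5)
Your argument is in substance the paper's own proof: unfold both sums along the braiding partitions, insert the braiding relations $\sum_{i \in I_\mu} x_i = v_\mu + u_\mu$ and $\sum_{j \in J_\mu} y_j = v_{\mu+1} + u_\mu$, and absorb the index shift from $v_\mu$ to $v_{\mu+1}$ using that $v_\mu = 0$ at limit elements; the paper does the absorption in a single regrouping (splitting off the terms $v_\mu$ with $\mu$ a limit element, which vanish, from the pairs $v_{\mu+1}+u_\mu$) rather than first separating $\sum_\mu v_\mu$ from $\sum_\mu u_\mu$, but that difference is cosmetic. The one step to tighten is your justification of $\sum_\mu v_\mu = \sum_\mu v_{\mu+1}$: there is in general no bijection $\sigma\colon\kappa\to\kappa$ for which $(v_{\sigma(\mu)})_{\mu\in\kappa}$ coincides with $(v_{\mu+1})_{\mu\in\kappa}$ entrywise --- for instance, if $v_\nu \ne 0$ for every successor $\nu$, then $(v_\mu)_{\mu\in\kappa}$ has zero entries at the limit positions while $(v_{\mu+1})_{\mu\in\kappa}$ has none, so the two families are not permutations of one another and \ref{a:commutative} alone does not apply. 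What you actually need is the (true and routine) fact that inserting or deleting zero entries does not change a $\kappa$-sum; this follows from \ref{a:trivial} and \ref{a:flatten} (flatten each family against a column of zeros so that both acquire $\kappa$ many zero slots, then match the two flattened families by a bijection and apply \ref{a:commutative}), and it is exactly the kind of bookkeeping the paper's regrouping step also leaves implicit. With that repair your proof is complete and coincides with the paper's telescoping argument.
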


\begin{proof}
  Let $(I_\mu)_{\mu \in \kappa}$, $(J_\mu)_{\mu \in \kappa}$ be braiding partitions, and $(u_\mu)_{\mu \in \kappa}$, $(v_\mu)_{\mu \in \kappa}$ be braiding families.
  Then
  \[
    \begin{split}
      \sum_{i \in \kappa} x_i &= \sum_{\mu \in \kappa} \sum_{i \in I_\mu} x_i = \sum_{\mu \in \kappa} u_\mu + v_\mu = \sum_{\substack{\mu \in \kappa \\ \text{$\mu$ a limit}}} v_\mu + \sum_{\mu \in \kappa} v_{\mu+1} + u_\mu \\
      &=
      \sum_{\mu \in \kappa} v_{\mu+1} + u_\mu = \sum_{\mu \in \kappa} \sum_{j \in J_\mu} y_j = \sum_{j \in \kappa} y_j. \qedhere
    \end{split}
  \]
\end{proof}

So $\lambda^-$-braided families have equal $\kappa$-sums, and if $H$ is $\lambda^-$-braided over $X$, the converse holds.
We illustrate the braiding relation in some examples.

\begin{examples} \label{e:kappa-braided} \mbox{}
  \begin{enumerate}
  \item \label{e:kappa-braided:N} Consider the monoid $(\bN_0,+)$.
    Two families are $\aleph_0^-$-braided if they both have finite support and the same sum, or if both of them have infinite support.
    The only non-trivial part is to observe that two families $(a_i)_{i \ge 0}$ and $(b_i)_{i \ge 0}$ with infinite supports are braided.
    But given any finite subsum of one family, say, $a_m+ \cdots + a_n$, we can always find $k \le l$ with $a_m + \cdots + a_n \le b_k + \cdots + b_l$, because $b_i \ge 1$.
    In this way we can  inductively construct a braiding of the two families.
    It follows that the $\aleph_0$-monoid $(\bN_0 \cup \{\infty\}, +)$ with operation as in \ref{e-kappa:trivial} of \cref{e:kappa} is $\aleph_0^-$-braided over the monoid $\bN_0$.

  \item \label{e:kappa-braided:R} Now consider $(\bR_{\ge 0},+)$.
    It is not hard check that two families are $\aleph_0^-$-braided if they have the same sums in the sense of convergent series (which may be $\infty$ if the series diverge) and if either both have finite support or both have infinite support.
    (To see that two families with infinite support and the same limit are braided, note that every finite subsum of one series must be strictly dominated by a finite subsum of the second one. An inductive argument then gives the braiding.)
    For each $a \in \bR_{>0}$ we can find series representations with finite support and with infinite support.
    It follows that neither of the two $\aleph_0$-monoid structures on $\bR_{\ge 0} \cup \{\infty\}$ introduced in \cref{e:kappa} \ref{e-kappa:trivial} and \ref{e-kappa:reals} is braided over $\bR_{\ge 0}$.

    We can construct an $\aleph_0$-monoid that is braided over $\bR_{\ge 0}$ as follows.
    Let $\widetilde{\bR}_{>0}$ denote a disjoint copy of $\bR_{>0}$ in which we denote the element corresponding to $a \in \bR_{>0}$ by $\widetilde a$.
    Let
    \[
      H \coloneqq \bR_{\ge 0} \cup \widetilde{\bR}_{>0} \cup \{\infty\}.
    \]
    Note that there is only one copy of $0$ and of $\infty$.
    For $(a_i)_{i \ge 0} \in H$, let $a=\sum_{i=0}^\infty a_i \in \bR_{\ge 0} \cup \{ \infty \}$ be the sum of the convergent series of real numbers (disregarding whether the elements of the sequence live in $\bR_{\ge 0}$ or in $\widetilde \bR_{>0}$).
    If $a=\infty$, we define the $\aleph_0$-sum to be $\sum_{i \in \aleph_0} a_i \coloneqq \infty$.
    Suppose $a$ is finite.
    If all $a_i$ are in $\bR_{\ge 0}$ and the family has finite support, we set $\sum_{i \in \aleph_0} a_i \coloneqq a \in \bR_{\ge 0}$.
    Otherwise, that is, if the family has infinite support or one $a_i$ is in $\widetilde{\bR}_{\ge 0}$, we set $\sum_{i \in \aleph_0} a_i \coloneqq \widetilde{a}$.
    Then $H$ is $\aleph_0^-$-braided over $\bR_{\ge 0}$, as two families in $\bR_{\ge 0}$ have the same $\aleph_0$-sum in $H$ if and only if they are braided.
    However $H$ is \emph{not} $\aleph_0^-$-braided over itself, as we can repeat the same argument that showed $\bR_{\ge 0} \cup \{\infty\}$ is not $\aleph_0^-$-braided over $\bR_{>0}$ with the elements in $\{0\} \cup \widetilde{\bR}_{>0}$.

  \item \label{e:kappa-braided:Q} We can repeat the construction in \ref{e:kappa-braided:R} with $(\bQ_{\ge 0}, +)$, and end up with
    \[
      \bQ_{\ge 0} \cup \widetilde{\bR}_{>0} \cup \{\infty\}.
    \]
    Elements in $\bR_{>0} \setminus \bQ$ can only be represented as convergent series with infinite support.
    Accordingly we only get one copy of these irrational numbers.
  \end{enumerate}
\end{examples}

In \cref{e:kappa-braided} we always considered $\lambda=\aleph_0$.
Indeed, we now observe that the braiding property collapses to a vastly simpler one if $\lambda \ne \aleph_0$ and becomes trivial in some other edge-cases.

\begin{lemma} \label{l:braiding-collapse}
  Let $X$ be a $\lambda^-$-monoid, let $(x_i)_{i \in \kappa}$, $(y_j)_{j \in \kappa}$ be two families in $X$, and fix a limit well-order on $\kappa$.
  \begin{enumerate}
  \item \label{coll:trivial} If $(x_i)_{i \in \kappa}$ and $(y_j)_{j \in \kappa}$ have support of cardinality strictly less than $\lambda$, and $\sum_{i \in \kappa} x_i = \sum_{i \in \kappa} y_i$, then the families are $\lambda^-$-braided.
  \item \label{coll:layers} Suppose that $(x_i)_{i \in \kappa}$ and $(x_j)_{k \in \kappa}$ are $\lambda^-$-braided with braiding partitions $(I_\mu)_{\mu \in \kappa}$ and $(J_\mu)_{\mu \in \kappa}$.
    Let $L \subseteq \kappa$ denote the set of limit elements.
    For $l \in L$ define $I(l)\coloneqq \bigcup_{m \in \bN_0} I_{l+m}$ and $J(l)\coloneqq \bigcup_{m \in \bN_0} J_{l+m}$.
    Then, for all $l \in L$, the families $(x_{i})_{i \in I(l)}$ and $(y_j)_{j \in J(l)}$ are $\lambda^-$-braided.
  \item \label{coll:layers-rev} Suppose that there are indexed partitions $(I(l))_{l \in \kappa}$ and $(J(l))_{l \in \kappa}$ of $\kappa$ such that the families $(x_i)_{i \in I(l)}$ and $(y_j)_{j \in J(l)}$ are $\lambda^-$-braided for all $l \in \kappa$
    \textup{(}with $\card{I(l)}=\card{J(l)}=\infty$ for all $l \in \kappa$\textup).
     Then there exists a limit well-order on $\kappa$ such that $(x_i)_{i \in \kappa}$ and $(y_j)_{j \in \kappa}$ are $\lambda^-$-braided.

   \item \label{coll:uncountable} If $\lambda \ne \aleph_0$, then $(x_i)_{i \in \kappa}$ and $(y_j)_{j \in \kappa}$ are $\lambda^-$-braided if and only if there exist indexed partitions $(I_\mu)_{\mu \in \kappa}$ and $(J_{\mu})_{\mu \in \kappa}$ of $\kappa$, such that $\card{I_\mu}$, $\card{J_\mu} < \lambda$, and
    \[
      \sum_{i \in I_\mu} x_i = \sum_{j \in J_\mu} y_j \qquad\text{for all $\mu \in \kappa$.}
    \]
  \end{enumerate}
\end{lemma}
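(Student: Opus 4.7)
The plan is to handle the four parts in the order they are stated, as each is essentially an unwinding of the braiding definition, with (4) a genuine consequence of (2), (3), and the regularity of $\lambda$. I will throughout treat the mild discrepancy between families indexed by $\kappa$ and families indexed by arbitrary sets of cardinality at most $\kappa$ by implicit zero-padding, relying on the convention mentioned after \cref{def:braiding}.

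For part \ref{coll:trivial}, I would construct an explicit braiding. Let $I_0 \supseteq \supp(x)$ and $J_0 \supseteq \supp(y)$ be subsets of $\kappa$ of cardinality strictly less than $\lambda$, extend these to indexed partitions $(I_\mu)_{\mu \in \kappa}$, $(J_\mu)_{\mu \in \kappa}$ of $\kappa$ with $\card{I_\mu}, \card{J_\mu} < \lambda$, and set $u_0 = \sum_{i \in \kappa} x_i = \sum_{j \in \kappa} y_j$ (equal by hypothesis), $u_\mu = 0$ for $\mu > 0$, and $v_\mu = 0$ for all $\mu$. Each braiding equation reduces to a triviality.

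For part \ref{coll:layers}, I would use the restriction of the given braiding to the successor chain starting at $l$. Relabeling via the bijection $\bN_0 \to \{l, l+1, \ldots\}$, $m \mapsto l+m$, the partitions $(I_{l+m})_{m \in \bN_0}$ and $(J_{l+m})_{m \in \bN_0}$, and the families $u_{l+m}$, $v_{l+m}$, serve directly as the new braiding data. The crucial point is that $v_l = 0$ in the original braiding (since $l$ is a limit), which matches the required vanishing at the initial index of the relabeled well-order.

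For part \ref{coll:layers-rev}, the idea is to paste the local braidings. First I would fix a limit well-order on $\kappa$ whose limit elements form a set $L$ of cardinality $\kappa$, so that $\kappa$ is the disjoint union of the $\kappa$-many countable successor chains $\{l, l+1, \ldots\}$ with $l \in L$; the order type $\omega \cdot \kappa$ does the job. After reindexing the given partition $(I(l))_{l \in \kappa}$ (resp. $(J(l))_{l \in \kappa}$) by $L$, I would, for each $l \in L$, plug the local braiding partitions $(I^{(l)}_m)_{m \in \bN_0}$, $(J^{(l)}_m)_{m \in \bN_0}$ and families $u^{(l)}_m$, $v^{(l)}_m$ into the slots via $I_{l+m} = I^{(l)}_m$, $J_{l+m} = J^{(l)}_m$, $u_{l+m} = u^{(l)}_m$, $v_{l+m} = v^{(l)}_m$. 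The condition that $v_\mu = 0$ at every limit $\mu = l$ is automatic because $v^{(l)}_0 = 0$ in each local braiding, and the two defining equations hold piecewise. The hypothesis $\card{I(l)} = \card{J(l)} = \infty$ ensures that each chain is really needed and we do not have empty blocks to worry about.

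For part \ref{coll:uncountable}, the forward direction follows from \ref{coll:layers}: by regularity of $\lambda \geq \aleph_1$, the countable unions $I(l), J(l)$ of sets of size $< \lambda$ again have size $< \lambda$, and the telescoping argument already used in \cref{l:telescope} shows $\sum_{i \in I(l)} x_i = \sum_{j \in J(l)} y_j$, so the partitions $(I(l))_{l \in L}$ and $(J(l))_{l \in L}$ (extended to $\kappa$ by empty pieces if $\card{L} < \kappa$) satisfy the piecewise-equal-sum condition. Conversely, given such partitions with $\card{I_\mu}, \card{J_\mu} < \lambda$ and $\sum_{i \in I_\mu} x_i = \sum_{j \in J_\mu} y_j$, I would simply take $u_\mu \coloneqq \sum_{i \in I_\mu} x_i \in X$ and $v_\mu \coloneqq 0$ under any limit well-order on $\kappa$; this directly realizes the braiding.

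The main technical obstacle is the bookkeeping in \ref{coll:layers-rev}: one must select a limit well-order on $\kappa$ whose set of limits has cardinality exactly $\kappa$, so that the $\kappa$-many blocks of the given partition can be matched to distinct successor chains. Beyond this, all steps are direct verifications of the axioms and mild regularity/cardinal-arithmetic arguments.
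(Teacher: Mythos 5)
Parts \ref{coll:trivial}, \ref{coll:layers}, and \ref{coll:uncountable} of your proposal are correct and essentially identical to the paper's argument: a single block carrying both supports with all other braiding data zero, restriction of the given braiding to the successor chain above a limit element, and (for \ref{coll:uncountable}) regularity of $\lambda \ge \aleph_1$ plus the layer decomposition and telescoping in one direction, and the choice $u_\mu = \sum_{i \in I_\mu} x_i$, $v_\mu = 0$ in the other.

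There is, however, a genuine gap in your treatment of \ref{coll:layers-rev}. You fix the well-order of type $\omega \cdot \kappa$ and then assume that each local braiding of $(x_i)_{i \in I(l)}$ and $(y_j)_{j \in J(l)}$ comes with braiding partitions indexed by $\bN_0$, so that it can be plugged into a single countable successor chain $\{l, l+1, \ldots\}$. But the hypothesis only says $\card{I(l)} = \card{J(l)}$ is \emph{infinite}; the local braiding partitions are indexed by a limit well-order on a set of cardinality $\card{I(l)}$, which may be uncountable, and its blocks have cardinality $< \lambda$. For example, with $\lambda = \aleph_0$ and $\card{I(l)} = \aleph_1$, any braiding partition of $I(l)$ into finite blocks necessarily has $\aleph_1$ many blocks, and these cannot be accommodated in a countable chain; so your construction breaks down, and the clause ``$\card{I(l)}=\card{J(l)}=\infty$'' is not about avoiding empty blocks, as you suggest, but simply records that the block cardinalities $\kappa_l$ may be arbitrary infinite cardinals. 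The repair is what the paper does: set $\kappa_l \coloneqq \card{I(l)}$, keep the limit well-order chosen for each local braiding, form $\Omega \coloneqq \{\, (l,j) : l \in \kappa,\ j \in \kappa_l \,\}$ with the lexicographic order (a limit well-order, since each block has no maximum), transfer this order to $\kappa$ via a bijection, and define the global braiding partitions and families blockwise; the vanishing of $v$ at limit elements then holds exactly as in your remark, since block minima and local limits are the limit elements of the concatenated order.
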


\begin{proof}
  \begin{proofenumerate}
  \item[\ref{coll:trivial}]
  First observe that, due to the restriction on the support, the expressions $u_0\coloneqq \sum_{i \in \kappa} x_i=\sum_{j \in \kappa} y_j$ can actually be interpreted as $\lambda^-$-sums and are therefore well-defined.
  Now let $I_0=J_0$ be a set of cardinality strictly less than $\lambda$ containing the supports of both families.
  Set $u_\mu \coloneqq 0$ for all $\mu \in \kappa \setminus\{0\}$, set $v_\mu \coloneqq 0$ for all $\mu \in\kappa$, and distribute the remaining indices in an arbitrary fashion among the $I_\mu$ and $J_\mu$ in such a way that the cardinality restriction $\card{I_\mu}$,~$\card{J_\mu} < \lambda$ is satisfied.
  Then the braiding property of \cref{def:braiding} is satisfied.

  \item[\ref{coll:layers}]
    We obtain a braiding of $(x_{\mu})_{\mu \in I(l)}$ and $(y_{\mu})_{\mu \in J(l)}$ from the corresponding subfamilies $(I_{l+m})_{m \in \bN_0}$ and $(J_{l+m})_{m \in \bN_0}$ of the braiding partitions and $(u_{l+m})_{m \in \bN_0}$ and $(v_{l+m})_{m \in \bN_0}$ of the braiding families.

  \item[\ref{coll:layers-rev}]
    We have $\kappa = \bigcup_{l \in \kappa} I(l) = \bigcup_{l \in \kappa} J(l)$, with each of the unions being disjoint.
    Let $\kappa_l \coloneqq \card{I(l)} = \card{J(l)} \le \kappa$.
    For $l \in \kappa$ and the subfamilies $(x_i)_{i \in I(l)}$ and $(y_j)_{j \in J(l)}$, let $(I_{l,\mu})_{\mu \in \kappa_l}$, $(J_{l,\mu})_{\mu \in \kappa_l}$ be the corresponding braiding partitions and $(u_{l,\mu})_{\mu \in \kappa_l}$,~$(v_{l,\mu})_{\mu \in \kappa_l}$ the braiding families.
    The braiding of $(x_i)_{i \in I(l)}$ and $(y_j)_{j \in J(l)}$ involves the choice of a limit well-order on $\kappa_l$.
    Consider the set $\Omega \coloneqq \{\, (l,j) \in \kappa \times \kappa : j \in \kappa_l \,\}$.
    It is well-ordered by $(l,j) < (l',j')$ if and only if $l<l'$ or $l=l'$ and $j < j'$.
    Let $\pi \colon \kappa \to \Omega$ be a bijection.
    Transferring the  well-order of $\Omega$ to $\kappa$ via $\pi$, and setting $I'_\mu \coloneqq I_{\pi(\mu)}$, $J'_\mu \coloneqq J_{\pi(\mu)}$, and $u'_\mu \coloneqq u_{\pi(\mu)}$, $v'_\mu \coloneqq v_{\pi(\mu)}$, shows that $(x_i)_{i \in \kappa}$ and $(y_j)_{j \in \kappa}$ are $\lambda^-$-braided.
  
  \item[\ref{coll:uncountable}]
  It is clear that two families satisfying the property in \ref{coll:uncountable} are braided over $X$ by choosing $v_\mu\coloneqq 0$  and $u_\mu \coloneqq \sum_{i \in I_\mu} x_i$ for all $\mu \in \kappa$.

  For the converse, suppose $(x_i)_{i \in \kappa}$ and $(y_j)_{j \in \kappa}$ are braided over $X$ with braiding partitions $(I_\mu)_{\mu \in \kappa}$, $(J_\mu)_{\mu \in \kappa}$ and braiding families $(u_\mu)_{\mu \in \kappa}$, $(v_\mu)_{\mu \in \kappa}$.
  For a limit element $\mu \in \kappa$, set $I_\mu' \coloneqq \bigcup_{n \in \bN_0} I_{\mu+n}$ and $J_\mu' \coloneqq \bigcup_{i \in \bN_0} J_{\mu+n}$; for all other $\mu$ simply set $I_\mu\coloneqq J_\mu\coloneqq \emptyset$.
  Because $\lambda$ is uncountable, we still have $\card{I_\mu'}$, $\card{J_\mu'} < \lambda$, and now
  \[
    \sum_{i \in I_\mu'} x_i = \sum_{j \in J_\mu'} y_j \qquad\text{for all $\mu \in \kappa$},
  \]
  by \ref{coll:layers} and the telescoping argument (\cref{l:telescope}). \qedhere
  \end{proofenumerate}
\end{proof}

We show that \cref{def:braiding} is actually independent of the choice of well-order.

\begin{lemma} \label{l:braided-no-order}
  Being $\lambda^-$-braided does not depend on the choice of well-order on $\kappa$.
\end{lemma}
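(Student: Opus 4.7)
The plan is to split the argument by the value of $\lambda$.

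If $\lambda \neq \aleph_0$, the conclusion follows immediately from \ref{coll:uncountable} of \cref{l:braiding-collapse}: the characterization of $\lambda^-$-braiding stated there---existence of indexed partitions $(I_\mu)_{\mu \in \kappa}$, $(J_\mu)_{\mu \in \kappa}$ of $\kappa$ with $\card{I_\mu}, \card{J_\mu} < \lambda$ and $\sum_{i \in I_\mu} x_i = \sum_{j \in J_\mu} y_j$ for all $\mu$---does not refer to any well-order on $\kappa$, and is therefore manifestly well-order independent.

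The substantial case is $\lambda = \aleph_0$. Fix two limit well-orders $\omega_1, \omega_2$ on $\kappa$, and suppose that $(x_i)_{i \in \kappa}$, $(y_j)_{j \in \kappa}$ are $\aleph_0^-$-braided with respect to $\omega_1$. First I would apply \ref{coll:layers} of \cref{l:braiding-collapse} to extract indexed partitions $(I(l))_{l \in L_1}$, $(J(l))_{l \in L_1}$ of $\kappa$, where $L_1$ denotes the $\omega_1$-limit set, such that each pair $(x_i)_{i \in I(l)}$, $(y_j)_{j \in J(l)}$ is countably $\aleph_0^-$-braided. Each piece $I(l), J(l)$ has cardinality at most $\aleph_0$.

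To construct an $\omega_2$-braiding I plan to directly specify new braiding partitions $(I'_\mu)_{\mu \in \kappa}, (J'_\mu)_{\mu \in \kappa}$ indexed by the $\omega_2$-well-order on $\kappa$, together with new braiding families $(u'_\mu), (v'_\mu)$. The idea is to use each $\omega_2$-layer to encode one (or several concatenated) countable braidings drawn from the $\omega_1$-decomposition. When $\card{L_2} \ge \card{L_1}$ (with $L_2$ the $\omega_2$-limit set), each countable braiding from the $\omega_1$-decomposition can simply be placed into a distinct $\omega_2$-layer, with the remaining layers filled by the trivial empty braiding; this direction is essentially a reindexing.

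The main obstacle is the opposite situation, when $\card{L_2} < \card{L_1}$, where several countable $\aleph_0^-$-braidings must be combined within a single $\omega_2$-layer. A naïve interleaving fails because the telescoping residuals $v_\mu$ from different chains need not clear compatibly at the join. The required sub-lemma is that any countable disjoint union of countable $\aleph_0^-$-braidings admits a combined countable $\aleph_0^-$-braiding; I expect to prove this by interleaving in such a way that residuals are absorbed at each switch, possibly using additional empty partitions as padding to route the residuals through. With the combining and splitting operations in hand, the reassembly along $\omega_2$'s layer structure goes through, and the lemma follows.
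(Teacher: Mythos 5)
Your reduction follows the same route as the paper: the case $\lambda\neq\aleph_0$ via \ref{coll:uncountable} of \cref{l:braiding-collapse} is correct, and your treatment of the easy direction (distributing the countable pieces obtained from \ref{coll:layers} over distinct layers of the other well-order, padding with empty blocks) is fine; note also that for uncountable $\kappa$ every limit well-order has $\kappa$ many limit elements, so only this easy direction ever occurs there, and the whole weight of the lemma sits in the case $\kappa=\aleph_0$.

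That is exactly where your proposal has a genuine gap: the sub-lemma you isolate — that countably many $\omega$-indexed $\aleph_0^-$-braidings can be merged into a single $\omega$-indexed one — is the entire content of the countable case, and you do not prove it; moreover, the mechanism you suggest would not work. Empty padding blocks cannot ``route'' residuals: if $I'_\mu=J'_\mu=\emptyset$ then the braiding equations force $u'_\mu+v'_\mu=0=u'_\mu+v'_{\mu+1}$, so a nonzero residual cannot be parked there, and sequential switching between chains founders on precisely the incompatibility you describe. The missing idea (the paper's) is a diagonal interleaving in which all chains are advanced simultaneously: writing the limit elements as $l_1,l_2,\ldots$, set $I'_n\coloneqq\bigcup_{i+j=n} I_{l_i+j}$, $J'_n\coloneqq\bigcup_{i+j=n} J_{l_i+j}$, $u'_n\coloneqq\sum_{i+j=n}u_{l_i+j}$, $v'_n\coloneqq\sum_{i+j=n}v_{l_i+j}$. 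No absorption between chains is needed because each chain telescopes on its own inside these antidiagonal sums, the relation $v_{l_i}=0$ at each chain's starting (limit) index makes the shifted sums match, and the antidiagonal keeps every block finite and every new braiding element a finite sum (a column-wise merge would fail here when there are infinitely many chains). Without this device, or an equivalent one, your argument does not establish the lemma in the case that actually matters.
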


\begin{proof}
  If $\kappa \ne \aleph_0$, this is clear from \ref{coll:uncountable} of \cref{l:braiding-collapse}.
  We now consider the countable case.
  To be explicit about the chosen well-order on $\aleph_0$ we now index our families and sums by (countable) ordinals.
  Correspondingly we speak of $\alpha$-braided families for an ordinal $\alpha$.
  Let $\alpha$ be a countable ordinal.
  Since $\omega \subseteq \alpha$, it is easy to see that $\omega$-braided families are $\alpha$-braided, by setting the additional sets of the braiding partitions to the empty set, and the additional elements of the braiding families to $0$.

  Conversely, suppose that $(x_i)_{i \in \alpha}$, $(y_j)_{j \in \alpha}$ are $\alpha$-braided, with braiding partitions $(I_\mu)_{\mu \in \alpha}$, $(J_\mu)_{\mu \in \alpha}$ and braiding families $(u_\mu)_{\mu \in \alpha}$, $(v_\mu)_{\mu \in \alpha}$.
  We can now use a diagonal argument.
  Let $L=\{l_1, l_2, \ldots\} \subseteq \alpha$ denote the subset of all limit ordinals, including $0$, in $\alpha$ (because $\alpha$ is countable, the set $L$ is countable or finite).
  We define
  \[
    I_n' \coloneqq \bigcup_{i+j=n} I_{l_i + j}, \quad J_n' \coloneqq \bigcup_{i+j=n} J_{l_i+j}, \quad u_n' \coloneqq \sum_{i+j=n} u_{l_i+j}, \quad \text{and} \quad v_n'\coloneqq\sum_{i+j=n} v_{l_i+j},
  \]
  for $n \in \omega$ (if $L$ is finite, we tacitly assume that the non-defined terms are omitted from the unions, respectively, sums).
  Keeping in mind $v_{l_i}=0$, it is routine to verify that this gives a braiding on the index set $\omega$.
\end{proof}

We now observe some basic properties of the $\lambda^-$-braidedness relation.

\begin{lemma} \label{l:braiding-basic}
  Let $X$ be a $\lambda^-$-monoid.
  \begin{enumerate}
  \item \label{b-basic:commutative} If $(x_\mu)_{\mu \in \kappa}$ is a family in $X$ and $\pi\colon \kappa\to\kappa$ is a bijection, then $(x_\mu)_{\mu \in \kappa}$ is braided to $(x_{\pi(\mu)})_{\mu \in \kappa}$.
  \item \label{b-basic:refsym} Being $\lambda^-$-braided is a reflexive and symmetric relation on families indexed by $\kappa$.
  \end{enumerate}
\end{lemma}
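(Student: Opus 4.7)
For part \ref{b-basic:commutative}, the plan is to use singleton braiding partitions. Specifically, take $I_\mu \coloneqq \{\mu\}$ and $J_\mu \coloneqq \{\pi^{-1}(\mu)\}$ for each $\mu \in \kappa$; these are indexed partitions of $\kappa$ with $\card{I_\mu} = \card{J_\mu} = 1 < \lambda$. Define braiding families $u_\mu \coloneqq x_\mu$ and $v_\mu \coloneqq 0$ for all $\mu \in \kappa$. Then $\sum_{i \in I_\mu} x_i = x_\mu = v_\mu + u_\mu$ and $\sum_{j \in J_\mu} x_{\pi(j)} = x_{\pi(\pi^{-1}(\mu))} = x_\mu = v_{\mu+1} + u_\mu$, verifying the braiding condition; the constraint $v_\mu = 0$ for limit $\mu$ is trivially satisfied.

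Reflexivity in \ref{b-basic:refsym} then follows immediately from \ref{b-basic:commutative} applied to $\pi = \id$.

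For symmetry, suppose $(x_i)_{i \in \kappa}$ and $(y_j)_{j \in \kappa}$ are $\lambda^-$-braided with braiding partitions $(I_\mu)_{\mu\in\kappa}$, $(J_\mu)_{\mu\in\kappa}$ and braiding families $(u_\mu)_{\mu\in\kappa}$, $(v_\mu)_{\mu\in\kappa}$. The plan is to build a braiding of $(y_j)_{j\in\kappa}$ with $(x_i)_{i\in\kappa}$ by shifting indices by one on the first side. Set $I'_{\mu+1} \coloneqq J_\mu$ for each $\mu \in \kappa$ and $I'_\mu \coloneqq \emptyset$ for limit $\mu$ (including $\mu = 0$), while $J'_\mu \coloneqq I_\mu$ is unchanged. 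Correspondingly, set $u'_\mu \coloneqq v_\mu$ and $v'_{\mu+1} \coloneqq u_\mu$ for $\mu \in \kappa$, with $v'_\mu \coloneqq 0$ for limit $\mu$. The non-empty $I'$-pieces are precisely the $J_\mu$'s (reindexed by the successor map), so $(I'_\mu)$ is indeed an indexed partition of $\kappa$ with $\card{I'_\mu} < \lambda$. The required identities reduce, on successor indices, to $v_{\mu'+1} + u_{\mu'} = u_{\mu'} + v_{\mu'+1}$ and $v_\mu + u_\mu = u_\mu + v_\mu$, which hold by commutativity of the underlying monoid, and on limit indices $\mu > 0$ to $0 = 0 + v_\mu$, which holds because $v_\mu = 0$ for limit $\mu$ in the original braiding.

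The main obstacle is the asymmetric role played by the $v$- and $u$-families in the definition of braiding: the $v$'s connect adjacent pieces on a single side, whereas the $u$'s connect the two sides. The shift by one precisely interchanges these roles, at the cost of inserting empty pieces into $(I'_\mu)$ at all limit indices, which is permitted by the definition of indexed partition. The constraint $v'_\mu = 0$ at limits must be checked carefully; it is exactly this constraint that forces the empty initial piece $I'_0$ and the cascading empty pieces at higher limits, rather than allowing a direct bijective re-indexing.
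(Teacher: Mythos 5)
Your proof is correct and follows essentially the same route as the paper: singleton braiding partitions with all $v_\mu = 0$ for part (1) (and reflexivity via $\pi = \id$), and an index shift that interchanges the roles of the $u$- and $v$-families for symmetry. The only cosmetic difference is how the shift is handled at limit indices: you insert empty blocks into the shifted partition $(I'_\mu)$, whereas the paper instead merges $I_\mu \cup I_{\mu+1}$ on the other side; both verifications are equally valid.
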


\begin{proof}
  \begin{proofenumerate}
  \item[\ref{b-basic:commutative}]
  Define $I_\mu \coloneqq \{\pi(\mu)\}$, $J_\mu \coloneqq \{\mu\}$ and set $u_\mu \coloneqq x_{\pi(\mu)}$ and $v_\mu\coloneqq 0$ for all $\mu \in \kappa$.
  This defines a braiding of $(x_\mu)_{\mu \in \kappa}$ and $(x_{\pi(\mu)})_{\mu \in \kappa}$.

  \item[\ref{b-basic:refsym}]
  Reflexivity follows by applying \ref{b-basic:commutative} with $\pi$ the identity map.
  Symmetry is a matter of shifting the indices: suppose $(x_i)_{i \in I}$ and $(y_j)_{j \in J}$ are braided over $X$ with braiding partitions $(I_\mu)_{\mu \in \kappa}$, $(J_\mu)_{\mu \in \kappa}$ and braiding families $(u_\mu)_{\mu \in \kappa}$, $(v_\mu)_{\mu \in \kappa}$.
  For limit elements $\mu \in \kappa$, we set $u_\mu' = u_\mu + v_{\mu+1}$, $v_\mu'=0$, and $I_\mu' = J_\mu$, $J_\mu'=I_\mu \cup I_{\mu+1}$.
  For non-limit elements $\mu$ we set $u_{\mu}'\coloneqq v_{\mu+1}$, $v_\mu'=u_\mu$ and $I_\mu'=J_\mu$, $J_\mu'=I_{\mu+1}$.
  For $\mu$ a limit element, we then have
  \[
    \sum_{i \in I_\mu'} y_i = v_{\mu+1}+u_\mu= v_\mu' + u_\mu' \quad\text{and}\quad \sum_{j \in J_\mu'} x_j = v_{\mu} + u_\mu + v_{\mu+1} + u_{\mu+1} = u_\mu' + v_{\mu+1}'.
  \]
  Similarly, for a non-limit $\mu \in \kappa$,
  \[
    \sum_{i \in I_\mu'} y_i = v_{\mu+1} + u_{\mu} = v_\mu' + u_\mu' \quad\text{and}\quad \sum_{j \in J_\mu'} x_j = v_{\mu+1} + u_{\mu+1} = v_{\mu+1}' + u_\mu'.
  \]
  Thus $(y_j)_{i \in \kappa}$ and $(x_j)_{j \in \kappa}$ are also braided over $X$. \qedhere
  \end{proofenumerate}
\end{proof}

While the proof of the transitivity of the braiding relation is not especially difficult, it is quite technical to state.
The main reason for that is the need to match up two possibly different indexed partitions for one family.
The following preliminary lemma is of help in this step, and encapsulates most of the technicalities.

\begin{lemma} \label{l:braid-repartition}
  Let $X$ be a $\lambda^-$-monoid.
  Fix a limit well-order on $\kappa$.
  Let $(x_i)_{i \in \kappa}$, $(y_j)_{j \in \kappa}$, and $(z_k)_{k \in \kappa}$ be families in $X$ such that $(x_i)_{i \in \kappa}$ and $(y_j)_{j \in \kappa}$ are $\lambda^-$-braided and $(y_j)_{j \in \kappa}$ and $(z_k)_{k \in \kappa}$ are $\lambda^-$-braided.
  Then there exist $\lambda^-$-braidings of $(x_i)_{i \in \kappa}$ and $(y_j)_{j \in \kappa}$ with braiding partitions $(I_\mu)_{\mu \in \kappa}$, $(J_\mu)_{\mu \in \kappa}$, and $\lambda^-$-braidings of  $(y_j)_{j \in \kappa}$ and $(z_k)_{k \in \kappa}$ with braiding partitions $(J_\mu')_{\mu \in \kappa}$, $(K_\mu')_{\mu \in \kappa}$ such that
  \[
    \bigcup_{\nu \le \mu} J_\nu \subseteq \bigcup_{\nu \le \mu} J_\nu' \subseteq \bigcup_{\nu \le \mu+1} J_{\nu} \qquad\text{for } \mu \in \kappa.
  \]
\end{lemma}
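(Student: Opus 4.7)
The plan is to coarsen both given braidings in parallel by a greedy transfinite recursion, producing a common ``interleaving'' of the two $y$-partitions. Denote the partitions from the two given braidings by $(\widehat I_\nu),(\widehat J_\nu)$ (first braiding) and $(\widehat J_\nu'),(\widehat K_\nu')$ (second braiding), with corresponding braiding families $(\widehat u_\nu),(\widehat v_\nu)$ and $(\widehat u_\nu'),(\widehat v_\nu')$. The desired partitions $(I_\mu), (J_\mu), (J_\mu'), (K_\mu')$ of the statement will arise as mergings of consecutive blocks of these.

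I first record a coarsening principle for a single braiding: given a strictly increasing sequence $(a_\mu)_{\mu \in \kappa}$ in $\kappa$ with $a_0 = 0$ and $a_\mu$ a limit in the well-order of $\kappa$ whenever $\mu$ is a limit, the blocks $I_\mu \coloneqq \bigcup_{\nu \in [a_\mu, a_{\mu+1})} \widehat I_\nu$, $J_\mu \coloneqq \bigcup_{\nu \in [a_\mu, a_{\mu+1})} \widehat J_\nu$, together with $v_\mu \coloneqq \widehat v_{a_\mu}$ and $u_\mu$ the telescoping combination of intermediate $\widehat u$- and $\widehat v$-terms within block $\mu$, form a valid braiding. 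The relations $\sum_{i \in I_\mu} x_i = v_\mu + u_\mu$ and $\sum_{j \in J_\mu} y_j = v_{\mu+1} + u_\mu$ fall out of direct telescoping inside each block, the vanishing $v_\mu = 0$ at limit $\mu$ is forced by the choice of $a_\mu$, and the cardinality bound $\card{I_\mu},\card{J_\mu} < \lambda$ follows from regularity of $\lambda$ provided each step uses $<\lambda$ many original blocks.

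Next I construct $(a_\mu)_{\mu \in \kappa}$ and $(b_\mu)_{\mu \in \kappa}$ by greedy transfinite recursion. Start with $a_0 = b_0 = 0$. At a successor stage, take $a_{\mu+1} > a_\mu$ minimal with $\bigcup_{\nu < a_{\mu+1}} \widehat J_\nu \supseteq \bigcup_{\nu < b_\mu} \widehat J_\nu'$, and then $b_{\mu+1} > b_\mu$ minimal with $\bigcup_{\nu < b_{\mu+1}} \widehat J_\nu' \supseteq \bigcup_{\nu < a_{\mu+1}} \widehat J_\nu$. At a limit stage, pass to suprema. Strict monotonicity guarantees that $a_\mu$ and $b_\mu$ are limits in $(\kappa,<)$ at limit $\mu$, so the coarsening principle applies to both given braidings (using $(a_\mu)$ for the first and $(b_\mu)$ for the second, coarsening the $\widehat I$- and $\widehat K'$-partitions in parallel). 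After collapsing any originally empty blocks, one shows inductively that each advance has size $<\lambda$ (the residue to cover at step $\mu+1$ is contained in $\bigcup_{\nu \in [b_{\mu-1}, b_\mu)} \widehat J_\nu'$, which has size $<\lambda$ by the inductive hypothesis and regularity of $\lambda$), and hence $a_\mu, b_\mu < \kappa$ for all $\mu < \kappa$ by standard cardinal arithmetic. The sandwich condition then reads
\[
  \bigcup_{\nu \le \mu} J_\nu = \bigcup_{\nu < a_{\mu+1}} \widehat J_\nu \subseteq \bigcup_{\nu < b_{\mu+1}} \widehat J_\nu' = \bigcup_{\nu \le \mu} J_\nu' \subseteq \bigcup_{\nu < a_{\mu+2}} \widehat J_\nu = \bigcup_{\nu \le \mu+1} J_\nu,
\]
and is immediate from the minimality in the definitions of $b_{\mu+1}$ and $a_{\mu+2}$. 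Coverage of $\kappa$ follows from $\sup_\mu a_\mu = \sup_\mu b_\mu = \kappa$, valid for any strictly increasing $\kappa$-indexed ordinal sequence.

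The main obstacle is maintaining the telescoping $(u,v)$-families coherently under coarsening, in particular arranging that $a_\mu$ and $b_\mu$ are limits in $(\kappa,<)$ at limit $\mu$ so that $\widehat v_{a_\mu} = 0$ and the analogous vanishing for the second braiding hold; strict monotonicity of the sequences takes care of this. A secondary technical point is the uniform size bound $\card{I_\mu},\card{J_\mu},\card{J_\mu'},\card{K_\mu'} < \lambda$ on the merged blocks, which reduces to regularity of $\lambda$ combined with the inductive verification that each recursion step involves $<\lambda$ many original blocks.
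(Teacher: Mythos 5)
Your coarsening principle is fine as far as the telescoping identities go (merging a consecutive run $[a_\mu,a_{\mu+1})$ of blocks does yield valid relations with $v_\mu=\widehat v_{a_\mu}$), but the construction breaks on the cardinality constraint that is part of \cref{def:braiding}: the blocks of a $\lambda^-$-braiding must satisfy $\card{I_\mu},\card{J_\mu}<\lambda$, and merging \emph{consecutive} runs of the original blocks cannot guarantee this once $\kappa>\lambda$. Your inductive size bound conflates two different quantities: the residue $\bigcup_{\nu<b_\mu}\widehat J'_\nu\setminus\bigcup_{\nu<a_\mu}\widehat J_\nu$ is indeed a subset of $\kappa$ of cardinality $<\lambda$, but its elements may lie in blocks $\widehat J_\nu$ whose \emph{indices} $\nu$ are arbitrarily far beyond $a_\mu$, and to cover them by an initial segment you must absorb all intermediate blocks, which are in general nonempty -- so ``collapsing originally empty blocks'' does not help. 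Concretely, take $\lambda=\aleph_0$ and $\kappa=\aleph_1$: if a single element of $\widehat J_0$ lies in $\widehat J'_\beta$ for some infinite $\beta<\omega_1$, then $b_1>\beta$ and the merged block $J'_0=\bigcup_{\nu<b_1}\widehat J'_\nu$ is typically infinite, so what you produce is not an $\aleph_0^-$-braiding. (If moreover $\mathrm{cf}(\kappa)<\lambda$, e.g.\ $\kappa=\aleph_\omega$ and $\lambda=\aleph_1$, the minimal $b_{\mu+1}$ need not even exist, since the $<\lambda$ many block indices to be covered can be cofinal in $\kappa$.) Since the case $\lambda=\aleph_0<\kappa$ is precisely the one needed for the paper's applications via \cref{l:braiding-transitive}, this is a genuine gap rather than an edge case.

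The paper's proof avoids this by \emph{not} taking consecutive runs: at each stage it selects a scattered family $\cA_\alpha$ (respectively $\cB_\alpha$) of fewer than $\lambda$ many original block indices -- only the blocks actually meeting the residue, plus a small amount of bookkeeping -- and defers all intermediate blocks to later stages; this is what keeps every new block of size $<\lambda$. The price is that the original telescoping no longer collapses automatically across a non-consecutive selection, which is exactly why the paper introduces $\lambda^-$-intervals, left saturation, and the explicit endpoint formulas for the new braiding families $u_\mu$ and $v_{\mu+1}$; that carry bookkeeping is the real content of the lemma. To repair your argument you would have to replace the intervals $[a_\mu,a_{\mu+1})$ by such scattered selections and then supply an analogue of this bookkeeping, at which point you have essentially reconstructed the paper's proof.
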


\begin{proof}
  It will be convenient to have some additional terminology available.
  We call a subset $S \subseteq \kappa$ a (half-open) \defit{$\lambda^-$-interval} if it is of the form
  \[
    S = \bigcup_{l \in L} \{\, l + i : n_l \le i < m_l \,\}
  \]
  for a set of limit elements $L \subseteq \kappa$ with $\card{L}< \lambda$, and non-negative integers $n_l < m_l$.
  The set of \defit{left endpoints} of $S$ is $\lep(S) \coloneqq \{\, l + n_l : l \in L \,\}$.
  The set of \defit{right endpoints} of $S$ is $\rep(S) \coloneqq \{\, l + m_l - 1 : l \in L \,\}$ and we write $\rep(S)^+$ for the set of successors of the right endpoints.
  An interval $S$ is \defit{left saturated} in $\kappa \setminus T$ (with $T$ an arbitrary subset) if, whenever $s \in S$ is a successor and $s-1 \not\in T$, then $s-1 \in S$.
  
  Let $(L_\mu)_{\mu \in \kappa}$, $(M_\mu)_{\mu \in \kappa}$ be braiding partitions and $(a_\mu)_{\mu \in \kappa}$, $(b_\mu)_{\mu \in \kappa}$ be braiding families for a $\lambda^-$-braiding of $(x_i)_{i \in \kappa}$ and $(y_j)_{j \in \kappa}$.
  Similarly, let $(M_\mu')_{\mu \in \kappa}$, $(N_\mu)_{\mu \in \kappa}$ be braiding partitions and $(c_\mu)_{\mu \in \kappa}$, $(d_\mu)_{\mu \in \kappa}$ braiding families for a $\lambda^-$-braiding of $(y_j)_{j \in \kappa}$ and $(z_k)_{k \in \kappa}$.
    We construct the desired braidings by transfinite recursion.
    Specifically, for $\mu \in \kappa$ we construct $\lambda^-$-intervals $\cA_\mu$, $\cB_\mu$, and $u_\mu$,~$v_\mu$, $v_{\mu+1}$, $g_\mu$, $h_{\mu}$, $h_{\mu+1} \in X$, such that the following properties all hold.
    \begin{enumerate}
    \item If $\mu$ is a limit element, then $v_\mu=h_\mu=0$.
    \item \label{tr:eqn-xy} With $I_\mu \coloneqq \bigcup_{\nu \in \cA_\mu} L_\nu$ and $J_\mu \coloneqq \bigcup_{\nu \in \cA_\mu} M_\nu$,
      \[
        \sum_{i \in I_\mu} x_i = u_\mu + v_\mu \qquad\text{and}\qquad \sum_{j \in J_\mu} y_j = u_\mu + v_{\mu+1}.
      \]
    \item \label{tr:eqn-yz} With $J'_\mu \coloneqq \bigcup_{\nu \in \cB_\mu} M'_\nu$ and $K_\mu \coloneqq \bigcup_{\nu \in \cB_\mu} N_\nu$,
      \[
        \sum_{j \in J_\mu'} y_j = g_\mu + h_\mu \qquad\text{and}\qquad \sum_{k \in K_\mu} z_k = g_\mu + h_{\mu+1}.
      \]
    \item It holds that $\bigcup_{\nu \le \mu} J_\mu \subseteq \bigcup_{\nu \le \mu} J_\mu'$ and $\bigcup_{\nu < \mu} J_{\nu}' \subseteq \bigcup_{\nu \le \mu} J_\nu$.
    \item The $\lambda^-$-interval $\cA_\mu$ is left saturated in $\kappa \setminus \bigcup_{\nu < \mu} \cA_\nu$, and analogously for $\cB_\mu$.
    \item \label{tr:right-saturated} If $\beta \in \kappa$ is such that $\beta + n < \mu$ for all $n \in \bN_0$, then $\bigcup_{n \in \bN_0} \cA_{\beta+n}$ is closed under successors.
      The analogous statement holds for $\cB_\beta$.
    \item \label{tr:formula-uv} $u_\mu = \sum_{\nu \in \cA_\mu} a_\nu + \sum_{\nu \in \cA_\mu \setminus \lep(\cA_\mu)} b_\nu$ and $v_{\mu + 1} = \sum_{\nu \in \rep(\cA_\mu)^+} b_\nu$;
    \item \label{tr:formula-gh} $g_\mu = \sum_{\nu \in \cB_\mu} c_\nu + \sum_{\nu \in \cB_\mu \setminus \lep(\cB_\mu)} d_\nu$ and $h_{\mu + 1} = \sum_{\nu \in \rep(\cB_\mu)^+} d_\nu$.
    \end{enumerate}

    Let $\alpha \in \kappa$ and suppose that these properties hold for all $\mu < \alpha$.
    We now construct the sets $\cA_\alpha$,~$\cB_\alpha$ and elements $u_\alpha$, $v_{\alpha+1}$, $g_\alpha$, $h_{\alpha+1}$ so that these properties remain true for $\mu=\alpha$.
    Note that $v_\alpha$ and $h_\alpha$ are already defined (either by recursion or by $v_\alpha=h_\alpha=0$ if $\alpha$ is a limit element).
    If $\alpha$ is a limit element, by a slight abuse of notation, let $J'_{\alpha-1}=J_{\alpha-1}=\emptyset$ for notational convenience.
    Because $\card{J_{\alpha-1}'} < \lambda$, there exists a set $\cA_\alpha$ with $\card{\cA_\alpha} < \lambda$ and such that we can cover
    \[
      J_{\alpha-1}' \setminus \bigcup_{\nu < \alpha} J_\nu \subseteq \bigcup_{\nu \in \cA_\alpha} M_{\nu}.
    \]
    By \ref{tr:eqn-xy} we can take $\cA_\alpha \subseteq \kappa \setminus \bigcup_{\nu < \alpha} \cA_\nu$.
    Growing $\cA_\alpha$ if necessary, we may also assume that it contains the minimum of $\kappa \setminus \bigcup_{\nu < \alpha} \cA_\nu$ if this set is nonempty.
    If $\alpha$ is a successor, we can further assume that for every $\nu \in \rep(\cA_{\alpha-1})$ we have $\nu+1 \in \cA_\alpha$.
    Finally, without restriction, we can take $\cA_\alpha$ to be a $\lambda^-$-interval that is left saturated in $\kappa \setminus \bigcup_{\nu < \alpha} \cA_\nu$.

    Now define $u_\alpha$, $v_{\alpha+1}$ according to \ref{tr:formula-uv}.
    We verify \ref{tr:eqn-xy}.
    Regularity of $\lambda$ and $\card{\cA_\alpha} < \lambda$ imply $\card{I_\alpha}$, $\card{J_\alpha} < \lambda$.
    Suppose $\nu \in \lep(\cA_\alpha)$.
    Then either $\nu$ is a limit element, in which case $b_\nu = 0$, or $\alpha$ must be a successor and $\nu-1 \in \cA_{\alpha-1}$ (this follows from \ref{tr:right-saturated} and the left saturated choice of $\cA_\alpha$).
    In the second case $\nu-1 \in \rep(\cA_{\alpha-1})$.
    Thus $v_\alpha = \sum_{\nu \in \lep(\cA_\alpha)} b_\nu$, and hence
    \[
      \sum_{i \in I_\alpha} x_i = \sum_{\nu \in \cA_\alpha} a_\nu + b_\nu = u_\alpha + v_\alpha.
    \]
    Similarly
    \[
      \sum_{i \in J_\alpha} y_i = \sum_{\nu \in \cA_\alpha} a_\nu + b_{\nu+1} = u_\alpha + v_{\alpha+1}.
    \]
    
    Observe that \ref{tr:right-saturated} holds because we ensured $\nu + 1 \in \cA_\alpha$ for every $\nu \in \rep(\cA_{\alpha-1})$ whenever $\alpha$ is a successor.

    The construction of $\cB_\alpha$ is analogous.
    Define $\cB_\alpha$ in such a way that
    \[
      J_\alpha \setminus \bigcup_{\nu < \alpha} J_\nu' \subseteq \bigcup_{\nu \in \cB_\alpha} M_\nu'.
    \]
    Again we can assume that $\cB_\alpha$ is a $\lambda^-$-interval, left saturated in $\kappa \setminus \bigcup_{\nu < \alpha} \cB_\nu$.
    We can also assume that $\cB_\alpha$ contains the minimum of $\kappa \setminus \bigcup_{\nu < \alpha} \cB_\nu$ (if this set is nonempty), and, if $\alpha$ is a successor, that $\nu + 1 \in \cB_{\alpha}$ for all $\nu \in \rep(\cB_{\alpha-1})$.
    One verifies the claimed properties in the same way as for $\cA_\alpha$.

    It only remains to observe that the constructed braiding partitions indeed exhaust the set $\kappa$.
    For this it is sufficient for the famalies $(\cA_\mu)_{\mu \in \kappa}$ and $(\cB_\mu)_{\mu \in \kappa}$ to exhaust $\kappa$.
    This is ensured by adding the minimum of the complement in each step, if this complement is nonempty.
\end{proof}

\begin{lemma} \label{l:braiding-transitive}
  Let $X$ be a $\lambda^-$-monoid.
  The $\lambda^-$-braiding relation is transitive on families indexed by $\kappa$, and hence it is an equivalence relation.
\end{lemma}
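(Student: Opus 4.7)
The approach is to paste together the given braidings. I would first invoke \cref{l:braid-repartition} to obtain compatible $\lambda^-$-braidings between $(x_i)_{i \in \kappa}$ and $(y_j)_{j \in \kappa}$, and between $(y_j)_{j \in \kappa}$ and $(z_k)_{k \in \kappa}$, with braiding partitions $(I_\mu, J_\mu)$ and $(J_\mu', K_\mu)$ and braiding families $(u_\mu, v_\mu)$ and $(g_\mu, h_\mu)$, arranged so that $\bigcup_{\nu \le \mu} J_\nu \subseteq \bigcup_{\nu \le \mu} J_\nu' \subseteq \bigcup_{\nu \le \mu+1} J_\nu$ for every $\mu \in \kappa$.

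The nesting provides the key decomposition: setting $D_\mu \coloneqq \bigcup_{\nu \le \mu} J_\nu' \setminus \bigcup_{\nu \le \mu} J_\nu$, one has $D_\mu \subseteq J_{\mu+1}$ and $J_{\mu+1} \setminus D_\mu \subseteq J_{\mu+1}'$. So each $J_{\mu+1}$ splits additively into an ``early'' part lying in the cumulative $J'$-partition through level $\mu$, and a ``late'' part lying in $J_{\mu+1}'$. Using this decomposition together with the four original braiding identities, I would construct new braiding families $(\tilde u_\mu)_{\mu \in \kappa}$ and $(\tilde v_\mu)_{\mu \in \kappa}$ by transfinite recursion: the carry $\tilde v_\mu$ at each non-limit step absorbs $v_\mu$, $h_\mu$, and the $y$-elements in the overlap $D_{\mu-1}$, while $\tilde u_\mu$ is assembled additively from $u_\mu$, $g_\mu$ and the matching pieces of $\sum_{j \in J_\mu} y_j$, $\sum_{j \in J'_\mu} y_j$. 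The partitions for $x$ and $z$ remain $(I_\mu)$ and $(K_\mu)$, whose cardinalities are already strictly less than $\lambda$.

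Verification of the braiding axioms $\sum_{i \in I_\mu} x_i = \tilde u_\mu + \tilde v_\mu$ and $\sum_{k \in K_\mu} z_k = \tilde u_\mu + \tilde v_{\mu+1}$ at each step then reduces to combining the four original sum identities with the additive decomposition of $J_{\mu+1}$, using commutativity and associativity from \cref{l:kappa-basic}. At limit elements $\mu$, both $v_\mu = 0$ and $h_\mu = 0$, and by construction the recursion has consumed all earlier overlaps, so $\tilde v_\mu = 0$ there as required. Combined with reflexivity and symmetry from \cref{l:braiding-basic}, this establishes that $\lambda^-$-braiding is an equivalence relation on families indexed by $\kappa$.

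The main obstacle is the purely additive nature of the construction: since monoids lack subtraction, we cannot simply ``solve'' the braiding equations for the new families—each component of $(\tilde u_\mu, \tilde v_\mu)$ must be assembled from the original data so that both sides balance at every step, and the cardinality bound $< \lambda$ must be preserved throughout. The nesting from the preparation lemma is exactly what makes this possible, since the only ``mismatch'' between the two cumulative $y$-sums at step $\mu$ is the well-controlled set $D_\mu \subseteq J_{\mu+1}$, whose $y$-sum flows cleanly into the next step's carry without any cancellation being needed.
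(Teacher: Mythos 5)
Your setup follows the paper's: invoke \cref{l:braid-repartition} to get the nesting $\bigcup_{\nu\le\mu}J_\nu \subseteq \bigcup_{\nu\le\mu}J'_\nu \subseteq \bigcup_{\nu\le\mu+1}J_\nu$, and split each $J_{\mu+1}$ into the overlap piece $D_\mu$ and the rest (this is the paper's $s_\mu$/$t_\mu$ bookkeeping). But the final step of your plan --- ``the partitions for $x$ and $z$ remain $(I_\mu)$ and $(K_\mu)$, only the carries $\tilde u_\mu,\tilde v_\mu$ are redefined'' --- is a genuine gap: the nesting does not make the carries balance blockwise against the \emph{original} partitions. Concretely, take $X=\bN_0^2$ free on $a,b$, $\kappa=\lambda=\aleph_0$; let $y=(a,b,a,b,\dots)$ with $J_\mu=\{2\mu,2\mu+1\}$, $x=(a,a,b,a,b,\dots)$ with $I_0=\{0\}$, $I_\mu=\{2\mu-1,2\mu\}$, carries $u_\mu=a$, $v_0=0$, $v_\mu=b$; and let $J'_0=\{0,1,2\}$, $J'_\mu=\{2\mu+1,2\mu+2\}$, $z_0=2a+2b$, $z_\mu=a+b$, $K_\mu=\{\mu\}$, carries $g_0=2a+b$, $g_\mu=a$, $h_0=0$, $h_\mu=b$. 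All braiding identities and the nesting hold. Now with the partitions frozen, $\tilde v_0=0$ forces $\tilde u_0=\sum_{I_0}x_i=a$, then $\sum_{K_0}z_k=2a+2b=\tilde u_0+\tilde v_1$ forces $\tilde v_1=a+2b$ (the monoid is cancellative), yet $\tilde v_1$ must be a summand of $\sum_{I_1}x_i=a+b$ --- impossible. So no braiding of $x$ and $z$ exists with exactly those partitions, and the transfinite recursion you describe cannot be completed; your phrase about the overlap ``flowing cleanly into the next step's carry'' is precisely where the absence of subtraction bites, since $h_\mu$ and the $D_{\mu-1}$-sum need not be summands of $\sum_{i\in I_\mu}x_i$.

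The paper's proof avoids this by \emph{coarsening} the partitions: after each limit element it groups three consecutive blocks, $M_{\mu+l+1}=I_{\mu+3l+1}\cup I_{\mu+3l+2}\cup I_{\mu+3l+3}$ and likewise for the $K$'s, and defines the new carries from the identities \eqref{eq:subs1}--\eqref{eq:subs2}, which only hold after summing $y$ over a union of \emph{two consecutive} $J$- (resp.\ $J'$-) blocks. That regrouping is the essential extra idea your proposal is missing; with it, the mismatch terms $s_\mu$, $t_\mu$ can be absorbed inside a single new block and the verification goes through without any cancellation.
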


\begin{proof}
  Let $(x_i)_{i \in \kappa}$ and $(y_j)_{j \in \kappa}$ be braided by braiding partitions $(I_\mu)_{\mu \in \kappa}$, $(J_\mu)_{\mu \in \kappa}$ and braiding families $(u_\mu)_{\mu \in \kappa}$, $(v_\mu)_{\mu \in \kappa}$, and let $(y_j)_{j \in \kappa}$ and $(z_k)_{k \in \kappa}$ be braided by braiding partitions $(J_\mu')_{\mu \in \kappa}$, $(K_\mu)_{\mu \in \kappa}$ and braiding families $(a_\mu)_{\mu \in \kappa}$, $(b_\mu)_{\mu \in \kappa}$.
  By \cref{l:braid-repartition} we may assume
  \begin{equation} \label{eq:jinc}
    \bigcup_{\nu \le \mu} J_\nu \subseteq \bigcup_{\nu \le \mu} J_\nu' \subseteq \bigcup_{\nu \le \mu+1} J_{\nu} \qquad\text{for } \mu \in \kappa.
  \end{equation}
  Bearing in mind that $J_\mu' \cap (\bigcup_{\nu < \mu} J_\nu') = \emptyset$, \cref{eq:jinc} implies $J_\mu' \subseteq J_\mu \cup J_{\mu+1}$.
  Because \cref{eq:jinc} also readily implies $\bigcup_{\nu < \mu} J_\nu' \subseteq \bigcup_{\nu \le \mu} J_\nu$, we similarly obtain $J_{\mu+1} \subseteq J'_{\mu} \cup J'_{\mu+1}$.
  For $\mu$ a limit element, we get $\bigcup_{\nu < \mu} J_\nu  =\bigcup_{\nu < \mu} J_{\nu'}$ from \cref{eq:jinc}, and so even $J_\mu \subseteq J_\mu'$.
  Set
  \[
    s_\mu \coloneqq \sum_{j \in J'_\mu \setminus J_\mu} y_j = \sum_{j \in J_{\mu+1} \setminus J_{\mu+1}'} y_j \qquad\text{and}\qquad t_\mu \coloneqq \sum_{j \in J_{\mu+1} \setminus J_\mu'} y_j = \sum_{J_{\mu+1}'\setminus J_{\mu+2}} y_j.
  \]
  Then
  \[
    \sum_{j \in J_{\mu+1} \cup J_{\mu + 2}} y_j = \sum_{j \in J'_{\mu+1}} y_j + s_\mu + t_{\mu+1}, \quad\text{and}\quad
    \sum_{j \in J'_{\mu+2} \cup J'_{\mu + 3}} y_j = \sum_{j \in J_{\mu + 3}} y_j + s_{\mu+3} + t_{\mu+1},
  \]
  and thus
  \begin{align}
    u_{\mu+1}+v_{\mu+2}+u_{\mu+2}+v_{\mu+3} &= a_{\mu+1} + b_{\mu+1} + s_\mu + t_{\mu + 1}, \label{eq:subs1} \\
    a_{\mu+2} + b_{\mu+2} + a_{\mu+3} + b_{\mu + 3} &= u_{\mu+3} + v_{\mu+4} + s_{\mu+3} + t_{\mu+1} \label{eq:subs2}.
  \end{align}

  For every limit element $\mu$ and every $l \in \bN_0$, we define $M_\mu\coloneqq I_\mu$, $N_\mu \coloneqq K_\mu$ and $M_{\mu + l +1} \coloneqq I_{\mu+3l+1} \cup I_{\mu + 3l+2} \cup I_{\mu + 3l+3}$ and $N_{\mu+l+1} \coloneqq K_{\mu + 3l+1} \cup K_{\mu + 3l+2} \cup K_{\mu + 3l+3}$.
  Set
  \begin{align*}
    c_\mu &\coloneqq u_\mu,& c_{\mu+l+1} &\coloneqq a_{\mu+3l+1} + t_{\mu+3l+1} + u_{\mu+3l+3}, & &  \\
    d_\mu &\coloneqq 0,& d_{\mu + l +1} &\coloneqq s_{\mu+3l} + b_{\mu +3l+1} + v_{\mu+3l+1}  & &\text{(for $l \ge 0$)}.
  \end{align*}
  We claim that this gives a $\lambda^-$-braiding of $(x_i)_{i \in \kappa}$ and $(z_k)_{k \in \kappa}$.
  Indeed for $\mu$ a limit element trivially $\sum_{m \in M_\mu} x_m = u_\mu$, and (keeping in mind $J_\mu \subseteq J_\mu'$)
  \[
    \begin{split}
      \sum_{n \in N_\mu} z_n &= a_\mu + b_{\mu+1} = \sum_{j \in J'_\mu} y_j + b_{\mu+1} = \sum_{j \in J_\mu} y_j + s_\mu + b_{\mu+1} \\
      &= u_\mu + v_{\mu+1} + s_\mu + b_{\mu + 1} = c_\mu + d_{\mu+1}.
    \end{split}
  \]
  If $l \ge 0$, then, using \cref{eq:subs1},
  \[
    \begin{split}
      \sum_{n \in M_{\mu + l + 1}} x_n &= \sum_{i=1}^3 u_{\mu+3l+i} + v_{\mu+3l+i} \\
      &= u_{\mu+3l+3} + v_{\mu+3l+1} + a_{\mu+3l+1} +b_{\mu+3l+1} + s_{\mu + 3l} + t_{\mu+3l+1}  \\
      &= c_{\mu+l+1} + d_{\mu+l+1}.
    \end{split}
  \]
  Similarly, using \cref{eq:subs2},
  \[
    \begin{split}
      \sum_{n \in N_{\mu + l + 1}} z_n &= \sum_{i=1}^3 a_{\mu+3l+i} + b_{\mu+3l+i+1} \\
      &= a_{\mu+3l+1} + b_{\mu+3l+4} + u_{\mu+3l+3} + v_{\mu+3l+4} + s_{\mu + 3l+3} + t_{\mu + 3l + 1} \\
      &= c_{\mu+l+1} + d_{\mu+l+2}.
    \end{split}
  \]
  Thus $(x_i)_{i \in \kappa}$ and $(z_k)_{k \in \kappa}$ are $\lambda^-$-braided.
\end{proof}

\subsection{Universal \texorpdfstring{$\kappa$}{κ}-extensions.}

Having shown that the $\lambda^-$-braiding relation is an equivalence relation on $\kappa$-indexed families over a $\lambda^-$-monoid $X$, in this subsection we construct a $\kappa$-overmonoid of $X$ satisfying a natural universal property.
It will turn out that this $\kappa$-overmonoid is equivalently characterized by being $\lambda^-$-braided over $X$.
We first observe a crucial extension property for $\lambda^-$-homomorphisms.

\begin{prop} \label{p:braiding-extend}
  Let $\lambda\le\kappa$, let $\overline{H}$ be a $\kappa$-monoid, and $H \subseteq \overline{H}$ a $\lambda^-$-submonoid.
  Suppose $\overline{H}$ is $\lambda^-$-braided over $H$.
  If $\varphi \colon H \to K$ is a $\lambda^-$-homomorphism to a $\kappa$-monoid $K$, then there exists a unique $\kappa$-homomorphism $\overline{\varphi}: \overline{H} \to K$ extending $\varphi$.
\end{prop}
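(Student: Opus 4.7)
The plan is to define $\overline{\varphi}$ by choosing, for each $t \in \overline{H}$, a representing family in $H$, and then to use the $\lambda^-$-braiding hypothesis to show the definition is independent of the chosen representative.

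\textbf{Step 1 (Definition).} Since $\overline{H}$ is $\lambda^-$-braided over $H$, we have $\overline{H} = \langle H \rangle_\kappa$. Hence for each $t \in \overline{H}$ there exists a family $(x_i)_{i \in \kappa}$ in $H$ with $t = \sum_{i \in \kappa} x_i$. Set
\[
  \overline{\varphi}(t) \coloneqq \sum_{i \in \kappa} \varphi(x_i),
\]
where the sum is taken in the $\kappa$-monoid $K$.

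\textbf{Step 2 (Well-definedness, the main point).} Suppose $(x_i)_{i \in \kappa}$ and $(y_j)_{j \in \kappa}$ are two families in $H$ with the same $\kappa$-sum $t$ in $\overline{H}$. By hypothesis they are $\lambda^-$-braided, so there exist braiding partitions $(I_\mu)_{\mu \in \kappa}$, $(J_\mu)_{\mu \in \kappa}$ and braiding families $(u_\mu)_{\mu \in \kappa}$, $(v_\mu)_{\mu \in \kappa}$ in $H$ satisfying the identities of \cref{def:braiding}. Applying $\varphi$, which is a $\lambda^-$-homomorphism and hence preserves sums indexed by sets of cardinality $<\lambda$, yields
\[
  \sum_{i \in I_\mu} \varphi(x_i) = \varphi(v_\mu) + \varphi(u_\mu), \qquad \sum_{j \in J_\mu} \varphi(y_j) = \varphi(v_{\mu+1}) + \varphi(u_\mu),
\]
for all $\mu \in \kappa$, with $\varphi(v_\mu) = 0$ whenever $\mu$ is a limit element. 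Thus $(\varphi(x_i))_{i \in \kappa}$ and $(\varphi(y_j))_{j \in \kappa}$ are $\lambda^-$-braided in $K$, and \cref{l:telescope} applied in $K$ gives $\sum_{i \in \kappa} \varphi(x_i) = \sum_{j \in \kappa} \varphi(y_j)$. Hence $\overline{\varphi}(t)$ is well-defined.

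\textbf{Step 3 (Extension of $\varphi$).} For $x \in H$, the family $(x, 0, 0, \ldots)$ sums to $x$ by \ref{a:trivial}, so $\overline{\varphi}(x) = \varphi(x) + 0 + 0 + \cdots = \varphi(x)$, again by \ref{a:trivial} in $K$. So $\overline{\varphi}|_H = \varphi$.

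\textbf{Step 4 ($\kappa$-homomorphism).} Let $(t_i)_{i \in \kappa}$ be a family in $\overline{H}$. For each $i$, pick $(x_{i,j})_{j \in \kappa}$ in $H$ with $t_i = \sum_{j \in \kappa} x_{i,j}$. Fix a bijection $\pi\colon \kappa \times \kappa \to \kappa$. Applying \ref{a:flatten} in $\overline{H}$ gives $\sum_{i \in \kappa} t_i = \sum_{k \in \kappa} x_{\pi^{-1}(k)}$, so by the definition of $\overline{\varphi}$ and \ref{a:flatten} in $K$,
\[
  \overline{\varphi}\Bigl(\sum_{i \in \kappa} t_i\Bigr) = \sum_{k \in \kappa} \varphi(x_{\pi^{-1}(k)}) = \sum_{i \in \kappa} \sum_{j \in \kappa} \varphi(x_{i,j}) = \sum_{i \in \kappa} \overline{\varphi}(t_i).
\]
Preservation of $0$ is immediate from $\overline{\varphi}(0) = \varphi(0) = 0$.

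\textbf{Step 5 (Uniqueness).} Any $\kappa$-homomorphism $\psi\colon \overline{H} \to K$ extending $\varphi$ satisfies $\psi\bigl(\sum_{i \in \kappa} x_i\bigr) = \sum_{i \in \kappa} \varphi(x_i)$ for every $(x_i)_{i \in \kappa}$ in $H$, which forces $\psi = \overline{\varphi}$ since $\overline{H} = \langle H \rangle_\kappa$.

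The heart of the argument is Step~2: everything else consists of verifying the axioms formally. The substantive input is that the braiding hypothesis transports relations from $\overline{H}$ to $K$ via the telescoping lemma (\cref{l:telescope}), and it is exactly here that the equivalence of ``equal $\kappa$-sum'' and ``$\lambda^-$-braided'' in $\overline{H}$ is used.
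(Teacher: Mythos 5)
Your proposal is correct and follows essentially the same route as the paper: define $\overline{\varphi}$ on $\kappa$-sum representations, use the braiding hypothesis to prove well-definedness (the paper carries out the telescoping computation with $\varphi$ applied inline, which is exactly your Step~2 via \cref{l:telescope} applied in $K$), and verify the extension, homomorphism, and uniqueness properties just as in the paper. No gaps.
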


\begin{proof}
  Since $\langle H \rangle_\kappa= \overline{H}$, every $x \in \overline H$ can be represented in the form $x = \sum_{i \in \kappa} x_i$ with $x_i \in H$, and necessarily
  \[
    \overline{\varphi}(x) = \sum_{i \in \kappa} \overline{\varphi}(x_i) = \sum_{i \in \kappa} \varphi(x_i),
  \]
  so the uniqueness of the extension is clear.
  
  To check its existence, suppose $x=\sum_{i \in \kappa} x_i = \sum_{j \in \kappa} y_j$ with $x_i$, $y_j \in H$.
  Then the families $(x_i)_{i\in \kappa}$ and $(y_j)_{j \in \kappa}$ are $\lambda^-$-braided.
  Let $(I_\mu)_{\mu \in \kappa}$, $(J_\mu)_{\mu \in \kappa}$ be braiding partitions, and $(u_\mu)_{\mu \in \kappa}$, $(v_\mu)_{\mu \in \kappa}$ be corresponding braiding families.
  Keep in mind $\card{I_\mu}$,~$\card{J_\mu} < \lambda$.
  Then
  \[
    \begin{split}
    \sum_{i \in \kappa} \varphi(x_i)
    &= \sum_{\mu \in \kappa} \sum_{i \in I_\mu} \varphi(x_i)
    = \sum_{\mu \in \kappa} \varphi\Big( \sum_{i \in I_\mu} x_i\Big)
    = \sum_{\mu \in \kappa} \varphi(v_\mu + u_\mu)
    = \sum_{\mu \in \kappa} \varphi(v_\mu)  + \varphi(u_\mu) \\
    &= \sum_{\mu \in \kappa} \varphi(v_{\mu+1})  + \varphi(u_\mu)
    = \sum_{\mu \in \kappa} \varphi(v_{\mu+1}  + u_\mu)
    = \sum_{\mu \in \kappa} \varphi\Big(\sum_{j \in J_\mu} y_j\Big)  \\
    &= \sum_{\mu \in \kappa} \sum_{j \in J_\mu} \varphi(y_j)
     = \sum_{j \in \kappa} \varphi(y_j).
    \end{split}
  \]
  Thus there is a well-defined map $\overline{\varphi} \colon \overline{H} \to K$ mapping $x=\sum_{i \in \kappa} x_i$ to $\sum_{i \in \kappa} \varphi(x_i)$.
  Then $\overline{\varphi}|_H = \varphi$ is clear.

  We check that $\overline{\varphi}$ is a $\kappa$-homomorphism.
  Let $(x_i)_{i \in \kappa}$ be a family in $\overline{H}$.
  Since $\overline{H} = \langle H \rangle_\kappa$, we can represent each $x_i$ as $x_i = \sum_{j \in\kappa} x_{i,j}$ with $x_{i,j} \in H$.
  Then
  \[
    \overline\varphi\Big(\sum_{i \in \kappa} x_i \Big) = \overline\varphi\Big(\sum_{i \in \kappa} \sum_{j \in \kappa} x_{i,j}) = \overline\varphi\Big(\sum_{(i,j) \in \kappa^2} x_{i,j} \Big) = \sum_{(i,j) \in \kappa^2} \varphi(x_{i,j}) = \sum_{i \in \kappa} \overline{\varphi}(x_i).
  \]
  Here we used \ref{a:flatten} to transform the double sum into a single one. Then, keeping in mind $x_{i,j} \in H$, the definition of $\overline\varphi$ allows us to exchange the $\kappa$-sum and the application of the homomorphism.
\end{proof}

Given a $\lambda^-$-monoid $H$, we are now going to construct a $\kappa$-monoid $\widehat H$ that is a $\lambda^-$-braided $\lambda^-$-overmonoid of $H$.
Because of the previous proposition, this overmonoid is characterized by a universal property.

\begin{defi}[Universal Property] \label{d:univprop}
  Let $\lambda \le \kappa$.
  Let $H$ be a $\lambda^-$-monoid.
  A $\kappa$-monoid $\widehat H$ is a \defit{universal $\kappa$-extension} of $H$ if there is a $\lambda^-$-homomorphism $\iota\colon H \to \widehat H$ satisfying: for every $\lambda^-$-homomorphism $\varphi\colon H \to K$ to a $\kappa$-monoid $K$, there exists a unique $\kappa$-homomorphism $\widehat{\varphi}\colon \widehat H \to K$ such that $\varphi = \widehat{\varphi} \circ \iota$.
\end{defi}

Being characterized by a universal property, universal $\kappa$-extensions are unique up to a unique isomorphism and we may speak of \emph{the} universal $\kappa$-extension.
Therefore this gives an equivalent characterization of $\lambda^-$-braided $\kappa$-overmonoids.

\begin{teor} \label{t:universal-exist}
  Let $\lambda \le \kappa$.
  Let $H$ be a $\lambda^-$-monoid.
  \begin{enumerate}
  \item \label{ue:braided} There exists a $\kappa$-monoid $\widehat H \supseteq H$ which is a $\lambda^-$-overmonoid of $H$ and which is $\lambda^-$-braided over $H$.
  \item \label{ue:universal} The $\kappa$-monoid $\widehat H$ is the universal $\kappa$-extension of $H$.
  \end{enumerate}
\end{teor}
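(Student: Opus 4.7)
The plan is to construct $\widehat H$ explicitly as the set of equivalence classes of $\kappa$-indexed families in $H$ modulo $\lambda^-$-braiding (an equivalence relation by \cref{l:braiding-transitive}), endow it with a $\kappa$-sum by flattening doubly-indexed families through a fixed bijection $\pi \colon \kappa \times \kappa \to \kappa$, and then verify the required properties. Part~\ref{ue:universal} will follow immediately from \cref{p:braiding-extend} once \ref{ue:braided} is established, so the real work lies in producing $\widehat H$.

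Set $\widehat H \coloneqq H^{\kappa}/{\sim}$, with $0 \in \widehat H$ the class of the zero family. For a family $([\mathbf{x}^{(i)}])_{i \in \kappa}$ in $\widehat H$, choose representatives $\mathbf{x}^{(i)} = (x^{(i)}_j)_{j \in \kappa}$ and define the $\kappa$-sum to be the class of the flattened family $(y_k)_{k \in \kappa}$ with $y_k \coloneqq x^{(i)}_j$ where $(i,j) = \pi^{-1}(k)$. The central well-definedness issue is independence of representatives: if each $\mathbf{x}^{(i)}$ is replaced by a $\lambda^-$-braided family $\mathbf{y}^{(i)}$, the two flattenings decompose, along the partition $\kappa = \bigcup_{i \in \kappa} \pi(\{i\} \times \kappa)$, into $\kappa$ many pairs of $\lambda^-$-braided subfamilies. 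Invoking \ref{coll:layers-rev} of \cref{l:braiding-collapse} then produces a limit well-order that makes the two flattened families themselves $\lambda^-$-braided. Independence of the choice of bijection $\pi$, as well as the axioms~\ref{a:trivial} and~\ref{a:flatten}, are routine once one uses \ref{b-basic:commutative} of \cref{l:braiding-basic}: any two flattenings of the same underlying data differ by a bijective reindexing of $\kappa$, hence yield $\lambda^-$-braided families.

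Next, define $\iota \colon H \to \widehat H$ by $h \mapsto [(h,0,0,\ldots)]$. Injectivity and the $\lambda^-$-homomorphism property of $\iota$ both reduce to \ref{coll:trivial} of \cref{l:braiding-collapse}: two $\kappa$-indexed families in $H$ with support of cardinality strictly less than $\lambda$ are $\lambda^-$-braided precisely when they have the same $\lambda^-$-sum. Identifying $H$ with $\iota(H) \subseteq \widehat H$, this exhibits $H$ as a $\lambda^-$-submonoid of $\widehat H$. By the very definition of the $\kappa$-sum, one has $[(x_i)_{i\in\kappa}] = \sum_{i\in\kappa}\iota(x_i)$, so $\widehat H = \langle \iota(H)\rangle_\kappa$. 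Finally, if $\sum_{i} \iota(x_i) = \sum_{j} \iota(y_j)$ holds in $\widehat H$ for families in $H$, then the corresponding flattenings are $\lambda^-$-braided by construction, and ``unflattening'' via $\pi^{-1}$ (using \ref{b-basic:commutative} of \cref{l:braiding-basic}) shows that $(x_i)_{i \in \kappa}$ and $(y_j)_{j \in \kappa}$ themselves are $\lambda^-$-braided. This establishes~\ref{ue:braided}, and then~\ref{ue:universal} follows by applying \cref{p:braiding-extend} to $\iota$ and matching it with \cref{d:univprop}.

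The main obstacle is precisely the well-definedness of the $\kappa$-sum: one has to combine $\kappa$ many independent $\lambda^-$-braidings (one per inner family) into a single $\lambda^-$-braiding of the flattened families. This is exactly what \ref{coll:layers-rev} of \cref{l:braiding-collapse} provides, which is presumably why that variant had to be isolated in the form it was.
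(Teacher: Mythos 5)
Your proposal is correct and follows essentially the same route as the paper: $\widehat H$ is constructed as $H^\kappa/\!\sim$ with the operation induced by concatenation/flattening of families, well-definedness is reduced to combining the individual braidings (which the paper dismisses as a trivial concatenation and you justify via \ref{coll:layers-rev} of \cref{l:braiding-collapse} together with \cref{l:braided-no-order}), the embedding of $H$ uses \ref{coll:trivial} of \cref{l:braiding-collapse}, and the universal property is deduced from \cref{p:braiding-extend}. Your citations are, if anything, slightly more explicit than the paper's own argument, so there is nothing to correct.
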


\begin{proof}
  \begin{proofenumerate}
  \item[\ref{ue:braided}]
  As a set, let $\widehat H \coloneqq H^\kappa /\! \sim$, where $\sim$ is the relation of being $\lambda^-$-braided.
  We already know that this relation is an equivalence relation (\cref{l:braiding-transitive}).
  We denote equivalence classes by square brackets.
  For $\nu \in \kappa$, let $(x_{\mu}^{(\nu)})_{\mu \in \kappa} \in H^\kappa$.
  The operation on $\widehat H$ is the one induced from the concatenation of families, that is,
  \[
    \sum_{\nu \in \kappa} [ (x_\mu^{(\nu)})_{\mu \in \kappa} ] \coloneqq [(x_{\mu}^{(\nu)})_{\mu, \nu \in \kappa}].
  \]
  To see that this is well-defined, recall first that the braiding relation does not depend on the order of elements in the family (\cref{l:braiding-basic}), so it does not matter which bijection we use to identify $\kappa \times \kappa$ with $\kappa$ on the right side.
  Further, if $(x_\mu^{(\nu)})_{\mu \in \kappa} \sim (y_\mu^{(\nu)})_{\mu \in \kappa}$ for every $\nu \in \kappa$, then these $\lambda^-$-braidings can trivially be concatenated to a $\lambda^-$-braiding of $(x_\mu^{(\nu)})_{\mu,\nu \in \kappa}$ and $(y_\mu^{(\nu)})_{\mu,\nu \in \kappa}$.
  Thus the operation is well-defined.
  It is straightforward that $\widehat H$ is a $\kappa$-monoid.

  We can embed $H$ into $\widehat H$, by mapping $x$ to $[(x_\mu)_{\mu \in \kappa}]$ with $x_0=x$ and $x_\mu = 0$ for $\mu \in \kappa \setminus \{0\}$.
  (By \cref{l:braiding-basic} it does not actually matter into which component we embed $H$.)
  By \ref{coll:trivial} of \cref{l:braiding-collapse}, the embedding is a $\lambda^-$-homomorphism.
  Without restriction, we may therefore consider $H$ to be a $\lambda^-$-submonoid of $\widehat H$.
  By construction $\widehat H$ is generated by $H$ as a $\kappa$-monoid.
  Also by construction, the $\kappa$-monoid $\widehat H$ is $\lambda^-$-braided over $H$.
  
  \item[\ref{ue:universal}] It suffices to verify the universal property.
  Since $\widehat H$ is $\lambda^-$-braided over $H$, this follows from \cref{p:braiding-extend}. \qedhere
  \end{proofenumerate}
\end{proof}

\subsection{Some universal \texorpdfstring{$\kappa$}{κ}-extensions}

To finish this section, we determine some universal $\kappa$-extensions.
Aside from providing some easy examples, this will come in handy in the context of monoids of modules in \cref{e:module-examples}.

\begin{examples} \label{e:univ-kappa-ext}
  From \cref{e:kappa-braided}, we obtain some universal $\aleph_0$-extensions.
  Namely
  \begin{align*}
    \widehat{\bN_0} &\cong \bN_0 \cup \{\infty\},
    & \widehat{\bR_{\ge 0}} &\cong \bR_{\ge 0} \cup \widetilde{\bR}_{>0} \cup \{\infty\},
    & \widehat{\bQ_{\ge 0}} &\cong \bQ_{\ge 0} \cup \widetilde{\bR}_{>0} \cup \{\infty\}.
  \end{align*}
\end{examples}

We start with a lemma. Recall that $F_{\kappa}(B)$ denotes the free $\kappa$-monoid on a basis $B$, and $F_{\lambda^-}(B)$ denotes the free $\lambda^-$-monoid on a basis $B$.
Also recall that if $H$ is a monoid and $S$ is a submonoid, then $S \subseteq H$ is \defit{saturated} if, whenever $s=t+h$ with $s$, $t \in S$ and $h \in H$, then also $h \in S$.
In other words, if $s$,~$t \in S$ and $t$ is a summand of $s$ in $H$, then $t$ is a summand of $s$ in $S$.

\begin{lemma} \label{l:ext-basic}
  Let $\lambda \le \kappa$.
  \begin{enumerate}
  \item \label{ext-basic:free} The free $\kappa$-monoid $F_\kappa(B)$ is the universal $\kappa$-extension of the free $\lambda^{-}$-monoid $F_{\lambda^-}(B)$.
  \item \label{ext-basic:sub}
    Let $H$ be a $\lambda^-$-monoid, let $\widehat H$ be the universal $\kappa$-extension, and let $S \subseteq H$ be a $\lambda^-$-submonoid.
    Suppose that $\lambda \ne \aleph_0$ or that $S \subseteq H$ is a saturated submonoid.
    Then $\langle S \rangle_{\kappa} \subseteq \widehat H$ is the universal $\kappa$-extension of $S$.
  \end{enumerate}
\end{lemma}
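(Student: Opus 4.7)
For part \ref{ext-basic:free}, the plan is to reduce the universal property of $F_\kappa(B)$ as a $\kappa$-extension of $F_{\lambda^-}(B)$ to the standard universal property of the free $\kappa$-monoid on the basis $B$ from \cref{p:free}. First I would verify that the set-theoretic inclusion $F_{\lambda^-}(B) \hookrightarrow F_\kappa(B)$ is a $\lambda^-$-homomorphism: the $\lambda^-$-operation on either side is coordinate-wise sum, and regularity of $\lambda$ ensures that $<\!\lambda$-indexed sums of elements whose support has cardinality $<\!\lambda$ stay in $F_{\lambda^-}(B)$. Then, given a $\lambda^-$-homomorphism $\varphi\colon F_{\lambda^-}(B) \to K$ to a $\kappa$-monoid $K$, I would let $f \coloneqq \varphi|_B$ and use \cref{p:free} to produce the unique $\kappa$-homomorphism $\widehat\varphi\colon F_\kappa(B) \to K$ extending $f$. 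Two checks remain: that $\widehat\varphi$ extends $\varphi$, which follows because both $\widehat\varphi|_{F_{\lambda^-}(B)}$ and $\varphi$ are $\lambda^-$-homomorphisms agreeing on $B$ and hence coincide by the $\lambda^-$-analogue of \cref{p:free}; and uniqueness of $\widehat\varphi$ as an extension of $\varphi$, which is automatic since $B \subseteq F_{\lambda^-}(B)$ already generates $F_\kappa(B)$ as a $\kappa$-monoid.

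For part \ref{ext-basic:sub}, by \cref{t:universal-exist} it suffices to show that $\langle S \rangle_\kappa \subseteq \widehat H$ is $\lambda^-$-braided over $S$. By definition $\langle S\rangle_\kappa$ is generated by $S$ as a $\kappa$-monoid, so the only non-trivial point is that any two families $(s_i)_{i\in\kappa}$, $(t_j)_{j\in\kappa}$ in $S$ with $\sum_i s_i = \sum_j t_j$ in $\widehat H$ admit a $\lambda^-$-braiding \emph{whose braiding families lie in $S$}. These families are already $\lambda^-$-braided in $H$, because $\widehat H$ is $\lambda^-$-braided over $H$; the task is to upgrade the braiding data to $S$. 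If $\lambda \ne \aleph_0$, I would invoke \ref{coll:uncountable} of \cref{l:braiding-collapse}: the existing braiding in $H$ can be replaced by partitions $(I_\mu)$, $(J_\mu)$ with $\card{I_\mu}, \card{J_\mu} < \lambda$ and $\sum_{i \in I_\mu} s_i = \sum_{j \in J_\mu} t_j$. Setting $u_\mu$ equal to this common value and $v_\mu = 0$ produces a braiding, and $u_\mu \in S$ automatically because $S$ is a $\lambda^-$-submonoid of $H$, hence closed under $<\!\lambda$-sums.

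In the remaining case $\lambda = \aleph_0$ with $S$ saturated in $H$, I would take a braiding in $H$ with finite partitions $(I_\mu)$, $(J_\mu)$ and families $(u_\mu), (v_\mu)$ in $H$, and prove by transfinite recursion on $\mu \in \kappa$ that $u_\mu, v_\mu$ can be chosen in $S$. At a limit $\mu$ (including $\mu = 0$) the braiding definition forces $v_\mu = 0 \in S$. The identity $\sum_{i \in I_\mu} s_i = v_\mu + u_\mu$, with the left-hand side a finite sum in $S$ and $v_\mu \in S$ by the inductive hypothesis, then forces $u_\mu \in S$ by saturatedness; the symmetric application to $\sum_{j \in J_\mu} t_j = v_{\mu+1} + u_\mu$ gives $v_{\mu+1} \in S$. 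The subtlety, and the reason the saturated hypothesis is essential when $\lambda = \aleph_0$, is exactly that at successor stages we have no \emph{a priori} control of $v_{\mu+1}$ from the $H$-braiding alone; the saturated hypothesis is the precise tool that lets us extract the missing summand. Once both $u_\mu$ and $v_\mu$ lie in $S$ for all $\mu$, the given braiding is the required braiding inside $S$.
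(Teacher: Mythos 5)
Your proposal is correct and follows essentially the same route as the paper: part \ref{ext-basic:free} is reduced to the universal property of the free $\kappa$-monoid from \cref{p:free}, and part \ref{ext-basic:sub} is proved by showing $\langle S\rangle_\kappa$ is $\lambda^-$-braided over $S$, using \cref{l:braiding-collapse}\ref{coll:uncountable} to force $v_\mu=0$ when $\lambda\ne\aleph_0$, and saturatedness plus the alternating identities $u_\mu+v_\mu=\sum_{i\in I_\mu}x_i$, $u_\mu+v_{\mu+1}=\sum_{j\in J_\mu}y_j$ with induction starting from $v_\mu=0$ at limit elements in the saturated case. The paper's proof of \ref{ext-basic:sub} is the same argument, so no gaps to report.
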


\begin{proof}
  \begin{proofenumerate}
  \item[\ref{ext-basic:free}]
  The universal property (\cref{d:univprop}) is satisfied by \cref{p:free}.

\item[\ref{ext-basic:sub}]
  If we can show that $\langle S \rangle_\kappa$ is $\lambda^-$-braided over $S$, then it follows that  $\langle S \rangle_\kappa=\widehat S$.
  Let $(x_i)_{i \in \kappa}$ and $(y_j)_{j \in \kappa}$ be families in $S$ with $\sum_{i \in \kappa} x_i = \sum_{j \in \kappa} y_j \in \widehat H$.
  Because $\widehat H$ is $\lambda^-$-braided over $H$, there exist braiding partitions $(I_\mu)_{\mu \in \kappa}$, $(J_\mu)_{\mu \in \kappa}$ and braiding families $(u_\mu)_{\mu \in \kappa}$, $(v_\mu)_{\mu \in \kappa}$ in $H$.
  We have to show $u_\mu$, $v_\mu \in S$.

  First consider the case in which $\lambda \ne \aleph_0$.
  Then \cref{l:braiding-collapse}\ref{coll:uncountable} allows us to assume $v_\mu = 0$ for all $\mu \in \kappa$.
  Hence $u_\mu = \sum_{i \in I_\mu} x_i = \sum_{j \in J_\mu} y_j \in S$.

  Now consider the case in which $S$ is a saturated submonoid of $H$.
  If $\mu$ is a limit element, then $v_\mu = 0$ and $u_\mu = \sum_{i \in I_\mu} x_i \in S$.
  Observe
  \[
    u_{\mu+n} + v_{\mu+n} = \sum_{i \in I_{\mu + n}} x_i \in S\qquad \text{and}\qquad u_{\mu+n} + v_{\mu+n+1} = \sum_{j \in J_{\mu + n}} y_j \in S
  \]
  for all limit elements $\mu$ and $n \in \bN_0$.
  Thus, the saturatedness of $S$ in $H$ and transfinite induction imply $u_{\mu+n}$, $v_{\mu + n} \in S$. \qedhere
  \end{proofenumerate}
\end{proof}

We now consider a natural class of submonoids of finitely generated free abelian monoids.
Recall that $F_{\kappa}$ denotes the $\kappa$-monoid consisting of all cardinals less than or equal to $\kappa$, and analogously $F_{\kappa^-}$ denotes the $\kappa^-$-monoid of all cardinals strictly less than $\kappa$ (if $\kappa$ is regular).
In particular $F_{\aleph_0^-}=\bN_0$ and $F_{\aleph_0} = \bN_0 \cup \{\aleph_0\}$.

Following \cite{PavelHerbera,infinitepullback}, we consider submonoids of the free $\kappa$-monoid $F_\kappa^n$ (with $n \in \bN_0$) defined by any number and combination of
\begin{itemize}
\item homogeneous linear equations of the form $a_1x_1+\cdots + a_nx_n=b_1x_1+ \cdots + b_n x_n$ with coefficients $a_i$, $b_i \in \bN_0$,
\item homogeneous linear inequalities of the form $a_1x_1+\cdots + a_nx_n \le b_1x_1+ \cdots + b_n x_n$ with $a_i$,~$b_i \in \bN_0$, and
\item homogeneous congruences of the form $a_1 x_1 + \cdots + a_n x_n \in d F_\kappa$ for $a_i$, $d \in \bN_0$.
\end{itemize}

Some important differences between submonoids of $\bN_0^n$ versus $\kappa$-submonoids of $F_\kappa^n$ defined by homogeneous linear equations, inequalities, and congruencese arise from the fact that $\bN_0^n$ is cancellative while $F_\kappa^n$ is not.

First, while a submonoid of $\bN_0^n$ defined by homogeneous linear inequalities can always be transformed into one defined by equations, through the introduction of additional slack variables, this is no longer the case in $F_{\aleph_0}^n$ (see \cite[Example 4.3]{infinitepullback}).
However, of course, a homogeneous linear equation may always be replaced by two inequalities.

Second, using the cancellativity of $\bN_0^n$, it is also easy to see that a submonoid of $\bN_0^n$ defined by homogeneous linear equations, inequalities, and congruences is saturated.
However, this is no longer true for similarly defined $\kappa$-submonoids of $F_\kappa^n$ with infinite $\kappa$.

\begin{prop} \label{p:ext-diophantine}
  \begin{enumerate}
    \item \label{ext-diophantine:kappa} Let $H \subseteq F_{\aleph_0}^n$ be an $\aleph_0$-monoid defined by any number and combination of homogeneous linear equations, inequalities, and congruences.
      Then the universal $\kappa$-extension $\widehat H$ is the $\kappa$-submonoid of $F_\kappa^n$ defined by the same homogeneous linear equations, inequalities, and congruences.
    \item \label{ext-diophantine:aleph} Let $H \subseteq \bN_0^n$ be a monoid defined by any number and combination of homogeneous linear equations, inequalities, and congruences.
      Then the universal $\aleph_0$-extension $\widehat H$ is the $\aleph_0$-submonoid
      \[
        H + \aleph_0 H \subseteq F_{\aleph_0}^n.
      \]
      Explicitly, the elements of $\aleph_0 H$ are obtained from those of $H$ by replacing every nonzero component by $\aleph_0$ while the zeroes remain.
  \end{enumerate}
\end{prop}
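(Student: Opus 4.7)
The plan is to apply \cref{l:ext-basic} in both parts to reduce the universal extension to a $\kappa$-submonoid of the form $\langle H \rangle_\kappa$ inside an ambient free universal $\kappa$-extension, and then to identify this submonoid with the claimed explicit description.

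For part (2), \cref{l:ext-basic}\ref{ext-basic:free} identifies $F_{\aleph_0}^n$ as the universal $\aleph_0$-extension of $\bN_0^n = F_{\aleph_0^-}(\{1,\ldots,n\})$. Since $H \subseteq \bN_0^n$ is saturated (as noted in the text, a consequence of cancellativity of $\bN_0^n$), \cref{l:ext-basic}\ref{ext-basic:sub} with $\lambda = \aleph_0$ yields $\widehat H = \langle H \rangle_{\aleph_0}$. To see $\langle H \rangle_{\aleph_0} = H + \aleph_0 H$, the inclusion $\supseteq$ is immediate. For $\subseteq$, given a countable family $(y_i)_{i \in \aleph_0}$ in $H$, I would pigeonhole the $y_i$'s over the (at most $2^n$) possible supports in $\{1,\ldots,n\}$: supports occurring only finitely often aggregate to a finite sum $a \in H$, while for each support $T$ occurring infinitely often the corresponding countable subsum equals $\aleph_0 y_T$ for any representative $y_T \in H$ with $\supp(y_T)=T$. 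Summing the finitely many $y_T$'s yields a $b \in H$ with $\sum_i y_i = a + \aleph_0 b$.

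For part (1), I would view $F_{\aleph_0}^n$ as $F_{\aleph_1^-}(\{1,\ldots,n\})$, so that by \cref{l:ext-basic}\ref{ext-basic:free} its universal $\kappa$-extension is $F_\kappa^n$; since $\lambda = \aleph_1 \ne \aleph_0$, \cref{l:ext-basic}\ref{ext-basic:sub} gives $\widehat H = \langle H \rangle_\kappa$ without a saturatedness hypothesis. Let $H'$ denote the subset of $F_\kappa^n$ defined by the same relations. The inclusion $\langle H \rangle_\kappa \subseteq H'$ is a routine cardinal-arithmetic check that the defining relations are preserved under $\kappa$-indexed sums. For the reverse inclusion, given $x \in H'$, I would set $F = \{k : x^k < \aleph_0\}$, $J = \{k : x^k \ge \aleph_0\}$, $\Gamma = \{x^k : k \in J\}$ (a finite set of infinite cardinals bounded by $\kappa$), and $S_\beta = \{k : x^k \ge \beta\}$ for each $\beta \in \Gamma$. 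Defining $x_0 \in F_{\aleph_0}^n$ by $x_0^k = x^k$ for $k \in F$ and $x_0^k = \aleph_0$ for $k \in J$, and $y_\beta = \aleph_0 \mathbf{1}_{S_\beta}$, a coordinate-wise check gives
\[
  x = x_0 + \sum_{\beta \in \Gamma} \beta\, y_\beta,
\]
which is a $\kappa$-sum of at most $1 + \sum_{\beta \in \Gamma} \beta \le \kappa$ elements of $F_{\aleph_0}^n$.

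The main obstacle will be verifying that $x_0$ and each $y_\beta$ actually lie in $H$ (and not merely in $H'$). The key observation is that for any coefficient vector $a \in \bN_0^n$ and any infinite cardinal $\beta$, one has $a \cdot x \ge \beta$ in $F_\kappa$ precisely when $S_\beta \cap \supp(a) \ne \emptyset$ (and $a \cdot x$ is infinite precisely when $J \cap \supp(a) \ne \emptyset$). The defining relations satisfied by $x \in H'$ therefore translate into combinatorial constraints on the supports of the coefficient vectors, which turn out to be exactly what is needed to conclude $x_0, y_\beta \in H$; a short case split then handles equations, inequalities, and congruences separately.
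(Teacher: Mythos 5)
Your proposal is correct and follows essentially the same route as the paper: both parts reduce via \cref{l:ext-basic} to computing $\langle H\rangle_\kappa$ (resp.\ $\langle H\rangle_{\aleph_0}$), and your decomposition $x = x_0 + \sum_{\beta\in\Gamma}\beta\, y_\beta$ is exactly the paper's $\alpha = \beta + \sum_{\aleph_0\le\lambda\le\kappa}\lambda\gamma^{(\lambda)}$ (merely restricted to the attained cardinal values), with the same observation about infinite coordinates/level sets justifying that the pieces satisfy the defining relations. In part (2) your pigeonhole on the supports of the summands is only a cosmetic variant of the paper's choice of a finite index set covering the finitely many finite coordinates.
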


\begin{proof}
  \begin{proofenumerate}
  \item[\ref{ext-diophantine:kappa}]
  By \ref{ext-basic:sub} of \cref{l:ext-basic}, which is applicable with $\lambda = \aleph_1$, it suffices to show that $\langle H \rangle_{\kappa}$ in $F_\kappa^n$ is defined by the same equations, inequalities, and congruences as $H$ itself in $F_{\aleph_0}^n$.
  Any element of $\langle H \rangle_{\kappa}$ will satisfy these equations, inequalities, and congruences.
  So it suffices to show: given a system of equations/inequalities/congruences in $F_\kappa^n$, every solution can be represented as a $\kappa$-sum of solutions in $F_{\aleph_0}^n$.

  First consider a single linear equation
  \begin{equation} \label{eq:lineareq}
    a_1 x_1 + \cdots + a_n x_n = b_1 x_1 + \cdots + b_n x_n.
  \end{equation}
  Let $\alpha=(\alpha_1,\ldots,\alpha_n) \in F_\kappa^n$ be a solution of \cref{eq:lineareq}.
  If any $\alpha_i$ is infinite and $a_i \ne 0$ or $b_i \ne 0$, then $(\alpha_1,\ldots,\alpha_n)$ being a solution of \cref{eq:lineareq} is equivalent to
  \[
    \max\{ a_1 \alpha_1, \ldots, a_n \alpha_n \} = \max\{ b_1 \alpha_1, \ldots, b_n \alpha_n\}.
  \]
  Thus, if we define $\beta=(\beta_1,\ldots,\beta_n)$ as $\beta_i = \min\{\alpha_i,\aleph_0\}$,  we obtain a solution in $F_{\aleph_0}^n$.
  Moreover, for each cardinal $\aleph_0 \le \lambda \le \kappa$, we can set $\gamma_i^{(\lambda)}=0$ if $\alpha_i < \lambda$ and $\gamma_i^{(\lambda)}=\aleph_0$ if $\alpha_i \ge \lambda$ to obtain another solution $\gamma^{(\lambda)}=(\gamma_1^{(\lambda)},\ldots,\gamma_n^{(\lambda)})$.
  
  The definitions of $\beta$ and $\gamma^{(\lambda)}$ depend on $\alpha$ but not on the coefficients in \cref{eq:lineareq}, and so the same process works for a solution of a system of linear equations.
  Moreover, it also works for inequalities and congruences.
  Thus, if $\alpha \in F_\kappa^n$ is a solution to a system of such equations, inequalities, and congruences, then
  \[
    \alpha = \beta + \sum_{\aleph_0 \le \lambda \le \kappa} \lambda \gamma^{(\lambda)}
  \]
  decomposes as a $\kappa$-sum with $\beta$, $\gamma^{(\lambda)} \in F_{\aleph_0}^n$ solutions of the same system of equations, inequalities, and congruences.

\item[\ref{ext-diophantine:aleph}]
  The submonoid $H$ of $\bN_0^n$ is saturated.
  By \ref{ext-basic:sub} of \cref{l:ext-basic} it suffices to show $\langle H \rangle_{\aleph_0} = H + \aleph_0H$.
  The inclusion $H +\aleph_0 H \subseteq \langle H \rangle_{\aleph_0}$ is clear.
  Let $a = \sum_{j \in \aleph_0} a^{(j)} \in F_{\aleph_0}^n$ with $a^{(j)} \in H$.
  Suppose $a=(a_1,\ldots,a_n)$.
  Let $I \subseteq \{1,\ldots,n\}$ be the set of all $i$ for which $a_i$ is finite, and let $\overline{I}$ be its complement in $\{1,\ldots,n\}$.
  There is a finite subset $J \subseteq \aleph_0$ such that $a^{(j)}_i = 0$ for all $j \not\in J$ and $i \in I$.
  Then
  \[
    a = \sum_{j \in J} a^{(j)}  + \aleph_0 \sum_{j \in \aleph_0 \setminus J} a^{(j)}.
  \]
  There exists a finite $J' \subseteq \aleph_0 \setminus J$ such that all components of $\sum_{j \in J'} a^{(j)}$ with index in $\overline{I}$ are nonzero.
  Hence
  \[
    \aleph_0 \sum_{j \in \aleph_0 \setminus J} a^{(j)} = \aleph_0 \sum_{j \in J'} a^{(j)} \in \aleph_0 H. \qedhere
  \]
  \end{proofenumerate}
\end{proof}

At this point it is illustrative to recall an example of Herbera and Příhoda \cite[Example 2.8]{PavelHerbera} to see that the conclusion of \ref{ext-diophantine:kappa} indeed does \emph{not} hold for the extension from monoids to $\aleph_0$-monoids.

\begin{example}  \label{exm:eqn-aleph0}
  The monoid $H = \{\, (n,n) : n \in \bN_0 \,\} \subseteq \bN_0^2$ can be defined as solution set of the equation $x=y$ or of $2x=x+y$.
  One gets
  \[
    \widehat H = \{\, (n,n) : n \in \bN_0 \,\} \cup \{(\aleph_0,\aleph_0)\} \cong F_{\aleph_0}.
  \]
  This corresponds to the submonoid of $F_{\aleph_0}^2$ defined by $x=y$.
  However, the submonoid of $F_{\aleph_0}^2$ defined by $2x=x+y$ is
  \[
    \{\, (n,n) : n \in \bN_0 \,\} \cup \{(\aleph_0,n) :n \in \bN_0 \} \cup \{ (\aleph_0,\aleph_0) \}.
  \]
\end{example}

\cref{p:ext-diophantine} implies that this phenomenon disappears for larger cardinals.

\begin{remark}
  Saturated submonoids of $\bN_0^n$ are finitely generated reduced Krull monoids.
  Conversely, every finitely generated reduced Krull monoid is isomorphic to a Diophantine monoid, that is, a submonoid of $\bN_0^n$ defined by linear homogeneous equations \cite[Theorem 3.1]{ChapmanKrauseOeljeklaus02}.
  \cref{p:ext-diophantine} therefore determines the universal $\kappa$-extensions of finitely generated reduced Krull monoids. In other contexts, finitely generated reduced Krull monoids are also known as reduced normal affine monoids.
\end{remark}

\section{\texorpdfstring{$\kappa$}{κ}-Monoids of modules} \label{sec:modules}

Having developed the notion of braidings and universal $\kappa$-extension, we are now ready to prove our main result on $\kappa$-monoids of modules.
We use a slight variation on the notion of a $\kappa$-small module \cite[Chapters 2.9 and 2.10]{Facchini98}.

\begin{defi}
  Let $\lambda$ be an infinite cardinal and let $R$ be a ring.
  An $R$-module $M$ is \defit{$\lambda^-$-small}, if whenever $\varphi\colon M \to \bigoplus_{i \in I} M_i$ is a homomorphism, then $\im \varphi$ is contained in some $\bigoplus_{i \in I'} M_i$ with $I' \subseteq I$ and $\card{I'} < \lambda$.
\end{defi}

\begin{example}
  \begin{enumerate}
  \item A $\lambda$-small module is one for which only the non-strict inequality $\card{I'} \le \lambda$ holds.
    If $\lambda^+$ denotes the successor of $\lambda$, then every $\lambda$-small module is $(\lambda^+)^-$-small.
    
  \item
    Every module generated by strictly fewer than $\lambda$ many elements is $\lambda^-$-small.
    In particular, finitely generated modules are $\aleph_0^-$-small.
    Countably generated modules are $\aleph_1^{-}$-small (equivalently, $\aleph_0$-small).

  \item Let $\cC$ be a class of modules closed under isomorphisms, under $\kappa$-indexed direct sums, and under direct summands.
    Let $\lambda \le \kappa$ be a regular cardinal.
    If $\cC_{\lambda^-}$ is the subclass of modules generated by strictly fewer than $\lambda$ many elements, then $\cC_{\lambda^-}$ is a class of $\lambda^-$-small modules closed under direct sums of cardinality strictly less than $\lambda$, and under direct summands.
  \end{enumerate}
\end{example}

\begin{teor} \label{t:module-braiding}
  Let $R$ be a ring, let $\kappa$ be an infinite cardinal, and let $\cC$ be a class of modules whose isomorphism classes form a set.
  Suppose that $\cC$ is closed under direct sums over index sets of cardinality at most $\kappa$, under direct summands, and under isomorphisms.
  Consider $\Vmon^\kappa(\cC)$ and let $\lambda \le \kappa$ be a regular cardinal.
  Let $\cC_{\lambda^-} \subseteq \cC$ be a subclass of $\lambda^-$-small modules that is closed under isomorphisms, direct summands, and under direct sums on index sets of cardinality strictly less than $\lambda$, and let
  \[
    \Vmon^{\lambda^-}(\cC_{\lambda^-}) = \big\{\, [M] \in \Vmon^\kappa(\cC) : M \in \cC_{\lambda^-} \,\big\}.
  \]
  Then the $\kappa$-submonoid of $\Vmon^{\kappa}(\cC)$ generated by $\Vmon^{\lambda^-}(\cC_{\lambda^-})$ is $\lambda^-$-braided over $\Vmon^{\lambda^-}(\cC_{\lambda^-})$.

  In particular, if every module in $\cC$ is a direct sum of modules in $\cC_{\lambda^-}$, then $\Vmon^{\kappa}(\cC)$ is $\lambda^-$-braided over $\Vmon^{\lambda^-}(\cC_{\lambda^-})$.
\end{teor}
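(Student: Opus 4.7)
The second assertion follows immediately from the first: under the stated hypothesis every module in $\cC$ is a direct sum of modules in $\cC_{\lambda^-}$, so every element of $\Vmon^\kappa(\cC)$ is a $\kappa$-sum of elements of $\Vmon^{\lambda^-}(\cC_{\lambda^-})$, whence $\Vmon^{\lambda^-}(\cC_{\lambda^-})$ generates $\Vmon^\kappa(\cC)$ as a $\kappa$-monoid. So I focus on the braiding assertion.

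Fix two families $([M_i])_{i \in \kappa}$ and $([N_j])_{j \in \kappa}$ in $\Vmon^{\lambda^-}(\cC_{\lambda^-})$ with equal $\kappa$-sum, and choose a single module $M$ realizing both decompositions $M \cong \bigoplus_{i \in \kappa} M_i \cong \bigoplus_{j \in \kappa} N_j$. The plan is to construct the required braiding partitions and families by a transfinite back-and-forth performed inside $M$, reading the equalities of \cref{def:braiding} off direct-sum decompositions of $M$. The key module-theoretic input is the elementary observation of Bergman \cite{Bergman23} alluded to in the introduction: whenever $M$ is a direct sum of $\lambda^-$-small summands and $A$ is a $\lambda^-$-small direct summand of $M$, then $A$ lies in some $<\!\lambda$-sized partial subsum and is even a direct summand of that subsum in a way compatible with the ambient decomposition. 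Iterating this observation alternately on the $(M_i)$-side and on the $(N_j)$-side yields matching $<\!\lambda$-sized batches of indices together with a controlled $\lambda^-$-small carryover submodule that naturally plays the role of $v_{\mu+1}$.

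To meet the limit-ordinal condition $v_\mu=0$, I will organize the recursion into countable layers indexed by the limit elements $l \in \kappa$, where a layer spans the stages $\{l, l+1, l+2, \ldots\}$. At the start of a layer the carryover is zero, and each successor stage updates it by one application of Bergman's lemma on the appropriate side, simultaneously incorporating the least not-yet-processed index on that side. A diagonal assignment of the indices of $\kappa$ to the $\kappa$ layers ensures that every index of $\kappa$ is eventually placed into some $I_\mu$ and into some $J_\mu$. The local braidings within each layer then combine into a global $\lambda^-$-braiding via \ref{coll:layers-rev} of \cref{l:braiding-collapse}.

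The main obstacle will be verifying that Bergman's lemma can indeed be iterated through $\omega$-many stages of a single layer while preserving the invariant that the accumulated matched summand $U_l \oplus U_{l+1} \oplus \cdots$ together with the running carryover is a direct summand of $M$ compatible with the restrictions of both the $(M_i)$- and the $(N_j)$-decomposition to the not-yet-used indices on each side. The regularity of $\lambda$ keeps each stage's cumulative data of cardinality strictly below $\lambda$, and the $\lambda^-$-smallness of the summands makes Bergman's lemma applicable throughout; the nontrivial part is the bookkeeping that threads a single direct-sum decomposition of $M$ through the alternating steps. This telescoping back-and-forth parallels the construction already carried out by Levy and Robson in their treatment of hereditary noetherian prime rings \cite[Theorem~46.1]{LevyRobson11}, as noted in the introduction, and I expect the technical heart of the argument to be a careful formulation of the invariant propagated across successor stages of a layer.
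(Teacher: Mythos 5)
Your proposal is essentially the paper's own argument: realize both decompositions internally in a single module, run one transfinite recursion of length $\kappa$ that alternately uses the $\lambda^-$-smallness of the running carryover (and of the complement it creates, which lies in $\cC_{\lambda^-}$ because $S_\alpha\oplus T_\alpha\cong\bigoplus_{i\in I_\alpha}A_i$) to capture a $<\!\lambda$-sized batch of indices on each side, let the carryover submodules play the roles of the braiding families $u_\mu$, $v_\mu$, and force exhaustion by absorbing the least unused index at each stage. The invariant you defer to ``bookkeeping'' is exactly the paper's induction hypothesis $\bigoplus_{\mu\le\alpha}\bigoplus_{i\in I_\mu}A_i=\bigoplus_{\mu\le\alpha}(S_\mu\oplus T_\mu)$ and $\bigoplus_{\mu\le\alpha}\bigoplus_{j\in J_\mu}B_j=\bigoplus_{\mu\le\alpha}(T_{\mu+1}\oplus S_\mu)$, whose propagation is routine from your ``Bergman lemma'' (facts \ref{m-fact:cancel} and \ref{m-fact:summand}), and the limit condition $v_\mu=0$ then falls out automatically from the telescoping of the $T$'s, so no separate layer-by-layer arrangement or appeal to \cref{l:braiding-collapse}\ref{coll:layers-rev} is actually needed.
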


We recall the following basic facts that we use in the proof of \cref{t:module-braiding}.
\begin{enumerate}[label=(M\arabic*),leftmargin=1.25cm] \label{m-facts}
\item \label{m-fact:cancel} If $A$, $B$, $C$ are submodules of some module $M$ and $M=A\oplus B = A \oplus C$, then $B \cong M/A \cong C$.
  (Here it is important that $A$ is the identical submodule on both sides; in general we can of course not conclude $B \cong C$ from $A \oplus B \cong A \oplus C$.)
\item \label{m-fact:summand} If $M=A\oplus B$ and $A \subseteq C \subseteq M$, then $C = A \oplus (B \cap C)$.
  In particular, if $A \subseteq C \subseteq M$ and $A$ is a direct summand of $M$, it is a direct summand of $C$.
\end{enumerate}

\begin{proof}[Proof of \cref{t:module-braiding}]
  Let $H=\big\langle \Vmon^{\lambda^-}(\cC_{\lambda^-}) \big\rangle_{\kappa}$ be the $\kappa$-submonoid of $\Vmon^{\kappa}(\cC)$ generated by $\Vmon^{\lambda^-}(\cC_{\lambda^-})$.
  The claim of the theorem is that $H$ is $\lambda^-$-braided over the $\lambda^-$-monoid $\Vmon^{\lambda^-}(\cC_{\lambda^-})$.
  Suppose that $\bigoplus_{i \in \kappa} A_i \cong \bigoplus_{j \in \kappa} B_j$ for modules $A_i$,~$B_j \in \cC_{\lambda^-}$.
  We have to show that the families $([A_i])_{i \in \kappa}$ and $([B_j])_{j \in \kappa}$ are $\lambda^-$-braided over $\Vmon^{\lambda^-}(\cC_{\lambda^-})$.
  Without restriction we may assume $\bigoplus_{i \in \kappa} A_i = \bigoplus_{j \in \kappa} B_j$, so that the direct sums are internal ones with all modules considered to be submodules of $M\coloneqq \bigoplus_{i \in \kappa} A_i$.

  Fix a limit well-order on $\kappa$, thus ensuring that every $\mu \in \kappa$ has a successor $\mu+1$.
  We will construct indexed partitions $(I_\mu)_{\mu \in \kappa}$ and $(J_\mu)_{\mu \in \kappa}$ of $\kappa$ with $\card{I_\mu}$, $\card{J_\mu} < \lambda$ and families of submodules $(S_\mu)_{\mu \in \kappa}$, $(T_\mu)_{\mu \in \kappa}$ of $M$ such that $T_\mu=0$ for every limit element, and for all $\alpha \in \kappa$
  \begin{equation} \label{eq:braiding:induction-claim}
    \bigoplus_{\mu \le \alpha} \bigoplus_{i \in I_\mu} A_i = \bigoplus_{\mu \le \alpha} S_\mu \oplus T_\mu
    \qquad\text{and}\qquad
    \bigoplus_{\mu \le \alpha} \bigoplus_{j \in J_\mu} B_j = \bigoplus_{\mu \le \alpha} T_{\mu +1} \oplus S_\mu.
  \end{equation}
  
  By \ref{m-fact:cancel}, \cref{eq:braiding:induction-claim} then implies
  \[
    \bigoplus_{i \in I_\mu} A_i \cong S_\mu \oplus T_\mu,
    \qquad\text{and}\qquad
    \bigoplus_{j \in J_\mu} B_j \cong T_{\mu+1} \oplus  S_{\mu} \qquad \text{for all } \mu \in \kappa.
  \]
  Furthermore, since the $A_i$, respectively $B_j$, are in $\cC_{\lambda^-}$ and $\card{I_\mu}$, $\card{J_\mu} < \lambda$, the modules $S_\mu$ and $T_\mu$ are also in $\cC_{\lambda^-}$.

  The construction proceeds by transfinite recursion on $\alpha \in \kappa$.
  So let $\alpha \in \kappa$ and suppose that, for all $\mu < \alpha$, we have defined $I_\mu \subseteq \kappa$, $J_\mu \subseteq \kappa$ with $\card{I_\mu}$, $\card{J_\mu} < \lambda$ and submodules $S_\mu$, $T_\mu$ and $T_{\mu+1}$ of $M$, so that $T_\mu=0$ whenever $\mu$ is a limit element, and the following properties hold:
  \begin{equation} \label{eq:braiding:induction-hypothesis}
    \bigoplus_{\mu < \alpha} \bigoplus_{i \in I_\mu} A_i = \bigoplus_{\mu < \alpha} S_\mu \oplus T_\mu
    \qquad\text{and}\qquad
    \bigoplus_{\mu < \alpha} \bigoplus_{j \in J_\mu} B_j = \bigoplus_{\mu < \alpha} T_{\mu +1} \oplus S_\mu.
  \end{equation}
  We need to define index sets $I_\alpha$, $J_\alpha$ and modules $S_\alpha$, $T_{\alpha+1}$ so that these properties remain true when the direct sums include $\mu=\alpha$.

  Keeping in mind $T_\mu=0$ for limit elements $\mu$, \cref{eq:braiding:induction-hypothesis} yields
  \begin{equation} \label{eq:limit-sum-finite}
    T_{\alpha} \oplus \bigoplus_{\mu < \alpha} \bigoplus_{i \in I_\mu} A_i = \bigoplus_{\mu < \alpha} \bigoplus_{j \in J_\mu} B_j.
  \end{equation}
  Set $I' \coloneqq \kappa \setminus \bigcup_{\mu < \alpha} I_\mu$ and $J' \coloneqq \kappa \setminus \bigcup_{\mu < \alpha} J_\mu$.

  We first deal with the (trivial) case $I'=\emptyset$.
  Then 
  \begin{equation} \label{eq:braiding:limit-sum}
    M = \bigoplus_{\mu < \alpha} \bigoplus_{i \in I_\mu} A_i,
  \end{equation}
  so necessarily $T_\alpha=0$.
  Thus, setting $I_\alpha\coloneqq J_\alpha\coloneqq \emptyset$ and $S_\alpha\coloneqq T_{\alpha+1}\coloneqq 0$, we have that \cref{eq:braiding:induction-claim} is trivially satisfied.

  We may now assume $I' \ne \emptyset$.
  Since $T_\alpha$ is $\lambda^-$-small, there exists some $I_\alpha \subseteq I'$ with $\card{I_\alpha} < \lambda$ such that
  \[
    T_{\alpha} \oplus \bigoplus_{\mu < \alpha} \bigoplus_{i \in I_\mu} A_i \subseteq \bigoplus_{\mu \le \alpha} \bigoplus_{i \in I_\mu} A_i \subseteq M.
  \]
  Enlarging $I_\alpha$ if necessary, we may in addition assume $\min I' \in I_\alpha$.
  Since the left side is a direct summand of $M = \bigoplus_{j \in \kappa} B_j$ (by \cref{eq:limit-sum-finite}), it is also a direct summand of $\bigoplus_{\mu \le \alpha} \bigoplus_{i \in I_\mu} A_i$ by \ref{m-fact:summand}.
  So we can choose $S_\alpha$ such that
  \[
    S_\alpha \oplus T_{\alpha} \oplus \bigoplus_{\mu < \alpha} \bigoplus_{i \in I_\mu} A_i = \bigoplus_{\mu \le \alpha} \bigoplus_{i \in I_\mu} A_i.
  \]
  This deals with the left side of \cref{eq:braiding:induction-claim}.

  Using \ref{m-fact:cancel}, we get $S_\alpha \oplus T_\alpha \cong \bigoplus_{i \in I_\alpha} A_i$.
  Therefore $S_\alpha \in \cC_{\lambda^-}$, in particular, the module $S_\alpha$ is $\lambda^-$-small.
  Thus there exists $J_\alpha \subseteq J'$ with $\card{J_\alpha} < \lambda$ such that
  \[
    S_\alpha \oplus \bigoplus_{\mu < \alpha} \bigoplus_{j \in J_\mu} B_j \subseteq \bigoplus_{\mu \le \alpha} \bigoplus_{j \in J_\mu} B_j.
  \]
  As before, keeping in mind \ref{m-fact:summand}, we find $T_{\alpha+1}$ such that
  \[
    T_{\alpha+1} \oplus S_\alpha \oplus \bigoplus_{\mu < \alpha} \bigoplus_{j \in J_\mu} B_j = \bigoplus_{\mu \le \alpha} \bigoplus_{j \in J_\mu} B_j.
  \]

  Now we have constructed families $(I_\mu)_{\mu \in \kappa}$, $(J_\mu)_{\mu \in \kappa}$ and $(S_\mu)_{\mu \in \kappa}$, $(T_{\mu})_{\mu \in \kappa}$ satisfying \cref{eq:braiding:induction-claim} for all $\alpha \in \kappa$.
  By construction the sets in the families $(I_\mu)_{\mu \in \kappa}$ are pairwise disjoint, and the same holds for $(J_\mu)_{\mu \in \kappa}$.
  It remains to establish $\kappa = \bigcup_{\mu \in \kappa} I_\mu = \bigcup_{\mu \in \kappa} J_\mu$.
  For this, note that when constructing $I_\alpha$, whenever $I' \ne \emptyset$, we have ensured $\min I' \in I_\alpha$.
  Thus, at the point we construct $I_\alpha$, we must have $\min I' \ge \alpha$.
  Therefore $\alpha \in I_\beta$ for some $\beta \le \alpha$.
  Let $J' \coloneqq \kappa \setminus \bigcup_{\mu \in \kappa} J_\mu$.
  Taking the union over all $\alpha$ in \cref{eq:braiding:induction-claim}, we have
  \[
    M = \bigoplus_{\mu \in \kappa} \bigoplus_{i \in I_\mu} A_i = \bigoplus_{\mu \in \kappa} \bigoplus_{j \in J_\mu} B_j.
  \]
  Thus, for every $j \in J'$ we have $B_j=0$.
  We now modify the $J_\mu$'s in such a way, that we add the elements of $J'$ to distinct $J_\mu$'s.
  This preserves $\card{J_\mu} < \lambda$, while all the other equations in \cref{eq:braiding:induction-claim} are trivially preserved due to $B_j=0$.
  Now $\kappa = \bigcup_{\mu \in \kappa} J_\mu$.
\end{proof}

\begin{cor}
  Let $R$ be a ring, let $\kappa$ be an infinite cardinal, and let $\lambda \le \kappa$ be a regular cardinal.
  Let $\cC$ be a class of modules whose isomorphism classes form a set.
  Suppose that $\cC$ is closed under direct sums over index sets of cardinality at most $\kappa$, under direct summands, and under isomorphisms.
  Let $\cC_{\lambda^-}$ denote the subclass of modules that are generated by strictly fewer than $\lambda$ many elements.
  \begin{enumerate}
  \item The $\kappa$-submonoid of $\Vmon^{\kappa}(\cC)$ generated by $\Vmon^{\lambda^-}(\cC_{\lambda^-})$ is $\lambda^-$-braided over $\Vmon^{\lambda^-}(\cC_{\lambda^-})$.
  \item If every module in $\cC$ is a direct sum of modules generated by strictly fewer than $\lambda$ many elements, then  $\Vmon^{\kappa}(\cC)$ is $\lambda^-$-braided over $\Vmon^{\lambda^-}(\cC_{\lambda^-})$.
  \end{enumerate}
\end{cor}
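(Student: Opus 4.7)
The plan is to deduce both parts of the corollary directly from \cref{t:module-braiding}, which is formulated for an abstract subclass $\cC_{\lambda^-}$ of $\lambda^-$-small modules satisfying certain closure properties. The entire task reduces to verifying that, when $\cC_{\lambda^-}$ is specialized to the subclass of modules generated by strictly fewer than $\lambda$ many elements, all the hypotheses of the theorem are satisfied. Nothing new needs to be proved about braidings or transfinite recursion; the work has already been done in \cref{t:module-braiding}.

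I will verify the four hypotheses in turn. First, a module generated by fewer than $\lambda$ elements is $\lambda^-$-small: this is exactly the content of the second item of the example preceding the theorem, and follows from the fact that the image of any generating set under a homomorphism $M \to \bigoplus_{i \in I} M_i$ has fewer than $\lambda$ elements, so only fewer than $\lambda$ of the components $M_i$ can be touched. Second, closure under isomorphisms is trivial, since the property of being generated by a set of bounded cardinality depends only on the isomorphism class. Third, closure under direct summands holds because if $M = N \oplus P$ is generated by a set $X$ with $\card{X} < \lambda$, the projections of the elements of $X$ generate $N$ and $P$, respectively. Fourth, closure under direct sums on index sets of cardinality strictly less than $\lambda$ uses the regularity of $\lambda$: a direct sum of fewer than $\lambda$ modules, each generated by fewer than $\lambda$ elements, is generated by a union of fewer than $\lambda$ sets each of cardinality strictly less than $\lambda$, and by regularity of $\lambda$ this union still has cardinality strictly less than $\lambda$.

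With these closure properties in hand, \cref{t:module-braiding} applies verbatim and yields part (1): the $\kappa$-submonoid of $\Vmon^{\kappa}(\cC)$ generated by $\Vmon^{\lambda^-}(\cC_{\lambda^-})$ is $\lambda^-$-braided over $\Vmon^{\lambda^-}(\cC_{\lambda^-})$. For part (2), the additional assumption that every module in $\cC$ is a direct sum of modules in $\cC_{\lambda^-}$ means that every isomorphism class $[M] \in \Vmon^{\kappa}(\cC)$ can be written as a $\kappa$-sum $[M] = \sum_{i \in \kappa} [M_i]$ with $[M_i] \in \Vmon^{\lambda^-}(\cC_{\lambda^-})$ (padding with zeroes if the direct-sum decomposition has fewer than $\kappa$ summands). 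Hence the $\kappa$-submonoid generated by $\Vmon^{\lambda^-}(\cC_{\lambda^-})$ coincides with the whole of $\Vmon^{\kappa}(\cC)$, and part (1) upgrades to the full conclusion of part (2).

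Since the corollary is essentially a rephrasing of \cref{t:module-braiding} in the natural special case, there is no genuine obstacle in the proof; the only point requiring mild care is the use of the regularity of $\lambda$ to confirm closure under direct sums of size strictly less than $\lambda$. A brief write-up consisting of these verifications followed by an appeal to \cref{t:module-braiding} suffices.
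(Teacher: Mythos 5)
Your proof is correct and matches the paper's intent exactly: the corollary is stated without separate proof because the verification that $<\!\lambda$-generated modules are $\lambda^-$-small and that $\cC_{\lambda^-}$ is closed under isomorphisms, direct summands, and direct sums on index sets of cardinality $<\!\lambda$ (using regularity of $\lambda$) is precisely the content of the example preceding \cref{t:module-braiding}, after which the theorem applies verbatim. Your handling of part (2) via the ``in particular'' clause of the theorem is likewise the intended route.
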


We note the consequences for projective modules.

\begin{cor} \label{cor:projective-braided}
  Let $R$ be a ring.
  Let $\kappa$ be an infinite cardinal, and let $\lambda \le \kappa$ be a regular cardinal.
  \begin{enumerate}
  \item \label{pb:generic} For all $\lambda \ge \aleph_1$, the $\kappa$-monoid $\Vmon^\kappa(R)$ is the universal $\kappa$-extension of the $\lambda^-$-monoid $\Vmon^{\lambda^-}(R)$.
  \item \label{pb:countable} $\Vmon^{\kappa}(R)$ is the universal $\kappa$-extension of the $\aleph_0$-monoid $\Vmon^{\aleph_0}(R)$.
  \item \label{pb:fg} If every projective module is a direct sum of finitely generated modules, then $\Vmon^{\kappa}(R)$ is the universal $\kappa$-extension of the monoid $\Vmon(R)$.
  \end{enumerate}
\end{cor}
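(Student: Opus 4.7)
The plan is to obtain all three parts directly from the preceding corollary (which gives $\lambda^-$-braiding of $\Vmon^\kappa(\cC)$ over $\Vmon^{\lambda^-}(\cC_{\lambda^-})$ whenever every module in $\cC$ decomposes as a direct sum of modules in $\cC_{\lambda^-}$) combined with \cref{t:universal-exist}, which identifies $\lambda^-$-braided $\kappa$-overmonoids with universal $\kappa$-extensions. The first step is to take $\cC$ to be the class of projective right $R$-modules generated by at most $\kappa$ elements and check that $\cC$ is closed under isomorphisms, direct summands, and $\kappa$-indexed direct sums (the latter using $\kappa \cdot \kappa = \kappa$), so that the hypotheses of the preceding corollary apply.

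For part \ref{pb:generic}, I would fix a regular $\lambda \ge \aleph_1$ and invoke Kaplansky's theorem: every projective module is a direct sum of countably generated ones, and since $\aleph_0 < \aleph_1 \le \lambda$, every countably generated module lies in $\cC_{\lambda^-}$. The preceding corollary then yields that $\Vmon^{\kappa}(R)$ is $\lambda^-$-braided over $\Vmon^{\lambda^-}(R)$, and \cref{t:universal-exist} identifies it as the universal $\kappa$-extension. Part \ref{pb:countable} is then the special case $\lambda = \aleph_1$, once one notes that the $\aleph_1^-$-monoid $\Vmon^{\aleph_1^-}(R)$ is precisely the $\aleph_0$-monoid $\Vmon^{\aleph_0}(R)$, which is the content of the remark at the end of \cref{subsec:kappaminus-monoids} translating between $\kappa$-monoids and $(\kappa^+)^-$-monoids applied to $\kappa = \aleph_0$.

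Part \ref{pb:fg} is parallel: the standing hypothesis is exactly that every module in $\cC$ decomposes as a direct sum of modules in $\cC_{\aleph_0^-}$ (the finitely generated projectives), so the preceding corollary with $\lambda = \aleph_0$ gives that $\Vmon^\kappa(R)$ is $\aleph_0^-$-braided over $\Vmon^{\aleph_0^-}(\cC_{\aleph_0^-}) = \Vmon(R)$, after which \cref{t:universal-exist} closes the argument (an $\aleph_0^-$-monoid being a plain commutative monoid). There is no substantial obstacle: the hard module-theoretic content — the transfinite recursion extracting a braiding from any isomorphism of direct sums — has already been carried out in \cref{t:module-braiding}, so the corollary itself is an accounting exercise matching Kaplansky's theorem with the appropriate generator-cardinality threshold for each of the three claims.
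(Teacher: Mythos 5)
Your proposal is correct and follows essentially the same route as the paper: invoke Kaplansky's theorem to get decompositions into countably generated (hence $<\!\lambda$-generated) projectives, apply \cref{t:module-braiding} (equivalently its corollary) to obtain the $\lambda^-$-braiding of $\Vmon^{\kappa}(R)$ over $\Vmon^{\lambda^-}(R)$, and conclude via the braided-equals-universal identification from \cref{p:braiding-extend} and \cref{t:universal-exist}, with \ref{pb:countable} as the case $\lambda=\aleph_1$ and \ref{pb:fg} as the case $\lambda=\aleph_0$.
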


\begin{proof}
  \begin{proofenumerate}
  \item[\ref{pb:generic}]
  By Kaplansky's Theorem \cite{Kaplansky58}, every projective module is a direct sum of countably generated projective modules, and therefore in particular a direct sum of $\lambda^-$-generated modules.
  Thus $\Vmon^{\kappa}(R)$ is $\lambda^-$-braided over $\Vmon^{\lambda^-}(R)$ by \cref{t:module-braiding}.

  \item[\ref{pb:countable}]
  This is the special case $\lambda=\aleph_1$ of \ref{pb:generic}.

  \item[\ref{pb:fg}]
  Under the stated assumption, we may apply \cref{t:module-braiding} with $\lambda=\aleph_0$. \qedhere
  \end{proofenumerate}
\end{proof}

\Cref{cor:projective-braided} shows that $\Vmon^{\aleph_0}(R)$ fully determines $\Vmon^{\kappa}(R)$ for every infinite cardinal $\kappa$.
It therefore supplements Kaplansky's theorem: not just is every projective module a direct sum of countably generated modules, but all the relations on these direct sums are also induced from relations between countable direct sums of countably generated projective modules.

Of particular interest is the case where every projective module is a direct sum of finitely generated modules.
Then the monoid $\Vmon(R)$ fully determines $\Vmon^{\kappa}(R)$, and thus all relations between direct sums of projective modules are induced from relations between finite direct sums of finitely generated projective modules.
Large classes of rings satisfy this condition.
McGovern, Puninski, and Rothmaler \cite{McGovernPuninskiRothmaler07} give a great overview over these results and give a characterization of rings over which every projective module is a direct sum of finitely generated modules \cite[Theorem 4.2]{McGovernPuninskiRothmaler07}.
We summarize the consequences in the following.

\begin{cor} \label{cor:fin-braided}
  Let $R$ be a ring and $\kappa$ an infinite cardinal.
  Under each of the following conditions, $\Vmon^{\kappa}(R)$ is the universal $\kappa$-extension of $\Vmon(R)$.
  \begin{enumerate}
  \item \label{fb:weakhereditary}$R$ is weakly semihereditary.
  \item \label{fb:hereditary} $R$ is left or right semihereditary \textup(that is, finitely generated left or right ideals are projective\textup).
  \item \label{fb:exchange} $R$ is an exchange ring.
  \item \label{fb:semiperfect} $R$ is semiperfect.
  \item \label{fb:noetherian} $R$ is a weakly noetherian commutative ring.
  \item \label{fb:bezout} $R$ is a Bézout ring with one-sided Krull dimension; in particular, commutative Bézout rings with finite uniform dimension.
  \end{enumerate}
\end{cor}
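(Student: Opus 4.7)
The plan is to reduce everything to part \ref{pb:fg} of \cref{cor:projective-braided}, which asserts that $\Vmon^{\kappa}(R)$ is the universal $\kappa$-extension of $\Vmon(R)$ whenever every projective $R$-module decomposes as a direct sum of finitely generated projective modules. Thus, for each of the six listed hypotheses, it suffices to quote (or sketch) a result from the literature ensuring that every projective right $R$-module is a direct sum of finitely generated projective modules. The paper of McGovern, Puninski, and Rothmaler \cite{McGovernPuninskiRothmaler07} collects all of the relevant statements and even characterizes such rings, so the work is largely a matter of pointing to the appropriate references.

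I would organize the cases so as to exploit the implications between them. First I would handle \ref{fb:weakhereditary}, the weakly semihereditary case, via \cite[Theorem 4.2 and the discussion preceding it]{McGovernPuninskiRothmaler07}, which provides the decomposition theorem for projective modules over a weakly semihereditary ring; this is the classical result generalizing the semihereditary case originally due to Albrecht and Bass. Since left (or right) semihereditary rings are automatically weakly semihereditary, \ref{fb:hereditary} follows for free. Exchange rings, case \ref{fb:exchange}, are covered by the theorem of Nicholson/Warfield that projective modules over exchange rings are direct sums of cyclic (hence finitely generated) ones; again, see \cite{McGovernPuninskiRothmaler07}. Semiperfect rings \ref{fb:semiperfect} are a special case of exchange rings, so this reduces at once. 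For weakly noetherian commutative rings \ref{fb:noetherian}, one invokes the result that every projective module is a direct sum of finitely generated projective modules, as discussed in \cite{McGovernPuninskiRothmaler07}; this rests on a structural decomposition of the ring relative to its associated primes. Finally for \ref{fb:bezout}, Bézout rings with one-sided Krull dimension, one uses the result (again available in the survey) that projective modules are direct sums of principal right ideals, each of which is finitely generated by construction.

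In each case, once the decomposition of projective modules into finitely generated summands is established, \cref{cor:projective-braided}\ref{pb:fg} immediately yields that $\Vmon^{\kappa}(R)$ is the universal $\kappa$-extension of $\Vmon(R)$. The only genuine ``content'' is the collection of module-theoretic decomposition theorems, all of which predate the present framework; the novelty here is packaging them through the universal-extension language rather than proving any of them anew. The main obstacle, if one exists, is merely to ensure that the cardinality bound in Kaplansky's theorem is handled consistently with the $\kappa$-monoid framework: since finitely generated modules are $\aleph_0^-$-small, \cref{t:module-braiding} applies directly with $\lambda=\aleph_0$, so no extra care is required beyond citing the standard results.
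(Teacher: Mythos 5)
Your proposal is correct and follows essentially the same route as the paper: in each case one quotes the known module-theoretic result (Bergman for weakly semihereditary, with semihereditary as a special case; Warfield for exchange rings, Mueller for semiperfect; Hinohara for weakly noetherian commutative; McGovern--Puninski--Rothmaler for B\'ezout with Krull dimension) that every projective module is a direct sum of finitely generated ones, and then applies \cref{cor:projective-braided}\ref{pb:fg}. The only differences are bibliographic attributions, not mathematical content.
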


\begin{proof}
  Statement \ref{fb:weakhereditary} is a result of Bergman \cite[Proposition 4.4]{Bergman72}, see also \cite[Section 5]{McGovernPuninskiRothmaler07}, generalizing earlier results of Kaplansky in the commutative case \cite{Kaplansky58}, Albrecht in the hereditary case \cite{Albrecht61}, and Bass in the semihereditary case \cite[Theorem 3]{Bass64}.
  Statement \ref{fb:hereditary} is a special case of \ref{fb:weakhereditary}.
  Note that the modules under consideration are always \emph{right} modules, but also the \emph{left} semihereditary property implies that right projective modules are direct sums of finitely generated modules.

  The case of exchange rings is due to Warfield \cite[Corollary 3]{Warfield72}, see also \cite[Fact 3.6]{McGovernPuninskiRothmaler07}, with the special case of semiperfect rings due to Mueller~\cite[Theorem 3]{Mueller70}.
  For weakly noetherian commutative rings the result was shown by Hinohara \cite{Hinohara63}, see also \cite[Fact 3.1]{McGovernPuninskiRothmaler07}.
  The last result is due to McGovern, Puninski, and Rothmaler \cite[Corollary 8.2]{McGovernPuninskiRothmaler07}.
\end{proof}

Given a $\kappa$-monoid $H$ and element $x \in H$, we write
\[
  \add(x) \coloneqq \{\, y \in H : \text{there exist } z \in H,\, n \in \bN_0 \text{ with } y+z=nx \,\}
\]
for the set of all summands of multiples of $x$.
(In multiplicative language it is the divisor-closed submonoid of $H$ generated by $x$.)
For an infinite cardinal $\lambda \le \kappa$, we similarly define
\[
  \add_{\lambda}(x) \coloneqq \{\, y \in H : \text{there exists } z \in H \text{ with } y+z=\lambda x \,\} = \add\big(\langle x \rangle_\lambda\big).
\]

The monoid $\Vmon(R)$ is a reduced commutative monoid with order-unit (induced by the isomorphism class of the right module $R$).
In their seminal papers Bergman \cite{Bergman} and Bergman and Dicks \cite{BergWar} show that for hereditary rings a converse is true: for every reduced commutative monoid with order-unit and every field $k$, there exists a hereditary $k$-algebra $R$ with $\Vmon(R) \cong H$.
This realization result depends crucially on Bergman's universal localizations.
More recently, Ara and Goodearl have shown that one can take $R$ to be a Leavitt path algebra of a finitely separated graph \cite[Proposition 4.4, Corollary 4.5]{AraGoodearl12}.
At its heart the construction of Ara and Goodearl still makes use of Bergman's universal localizations.
However it has the attractive feature that the algebras are defined by a separated graph, and given a presentation of a monoid it is straightforward to construct a suitable separated graph.

Since $\Vmon^{\kappa}(R)$ is braided over $\Vmon(R)$ for hereditary rings $R$, the machinery we have developed allows us to immediately extend the realization results to $\kappa$-monoids.

\begin{cor}\label{conclusion}
  Let $H$ be a $\kappa$-monoid.
  \begin{enumerate}
  \item \label{hereditary}
    The following statements are equivalent.
    \begin{equivenumerate}
    \item\label{conclusion-hereditary:monoid} There exists $x \in H$ such that $H$ is braided over $\add(x)$.
    \item\label{conclusion-hereditary:all}  For every field $k$ there exists a hereditary $k$-algebra $R$ such that $\Vmon^{\kappa}(R) \cong H$.
    \item\label{conclusion-hereditary:one} There exists a right hereditary ring $R$ such that $\Vmon^{\kappa}(R) \cong H$.
    \end{equivenumerate}

  \item \label{nonhereditary}
    The following statements are equivalent for a ring $R$ and a $\kappa$-monoid $H$.
    \begin{equivenumerate}
      \item \label{conclusion-nonhereditary:monoid} There exists $x \in H$ such that $H$ is $\aleph_1^-$-braided over $\add_{\aleph_0}(x)$ and $\Vmon^{\aleph_0}(R) \cong \add_{\aleph_0}(x)$.
      \item \label{conclusion-nonhereditary:one} $\Vmon^{\kappa}(R) \cong H$.
    \end{equivenumerate}
  \end{enumerate}
\end{cor}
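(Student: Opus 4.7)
\medskip

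The plan is to deduce both parts from the universal extension theorem (\cref{t:universal-exist}) combined with Kaplansky's theorem and existing ring-theoretic realization results; only the ``monoid-to-ring'' directions actually construct anything, while the ``ring-to-monoid'' directions are essentially the content of \cref{cor:projective-braided} and \cref{cor:fin-braided}. The key observation in both parts is that the stated braiding hypothesis on $H$ is exactly equivalent, via \cref{t:universal-exist}, to $H$ being the universal $\kappa$-extension of the indicated $\lambda^-$-submonoid, so that once we can match the small monoid to $\Vmon^{\lambda^-}(R)$ for an appropriate $R$, the $\kappa$-monoid level isomorphism follows automatically.

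For part \ref{hereditary}, I first handle \ref{conclusion-hereditary:one}$\Rightarrow$\ref{conclusion-hereditary:monoid}: if $R$ is right hereditary, then \ref{fb:hereditary} of \cref{cor:fin-braided} gives that $\Vmon^{\kappa}(R)$ is the universal $\kappa$-extension of $\Vmon(R)$, and since $[R]$ is an order-unit we have $\Vmon(R) = \add([R])$ inside $\Vmon^\kappa(R)$, so taking $x = [R]$ exhibits $H \cong \Vmon^\kappa(R)$ as braided over $\add(x)$. Next I do \ref{conclusion-hereditary:monoid}$\Rightarrow$\ref{conclusion-hereditary:all}: the submonoid $\add(x) \subseteq H$ is a reduced commutative monoid with order-unit $x$ (reducedness is inherited from \cref{infinite}, and the order-unit property is immediate from the definition of $\add$). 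The Bergman--Dicks realization then produces, for any field $k$, a hereditary $k$-algebra $R$ with $\Vmon(R) \cong \add(x)$ as ordinary monoids. Applying \cref{cor:fin-braided}\ref{fb:hereditary} and the uniqueness part of the universal property in \cref{t:universal-exist} converts this into $\Vmon^\kappa(R) \cong H$. The remaining implication \ref{conclusion-hereditary:all}$\Rightarrow$\ref{conclusion-hereditary:one} is trivial.

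Part \ref{nonhereditary} follows the same template but one level up, with no realization result required because the ring $R$ is given. For \ref{conclusion-nonhereditary:one}$\Rightarrow$\ref{conclusion-nonhereditary:monoid}, set $x := [R]$; the identification $\Vmon^{\aleph_0}(R) = \add_{\aleph_0}([R])$ inside $\Vmon^\kappa(R)$ holds because a countably generated projective module is exactly a direct summand of $R^{(\aleph_0)}$, and then \cref{cor:projective-braided}\ref{pb:countable} says $\Vmon^\kappa(R)$ is $\aleph_1^-$-braided over $\Vmon^{\aleph_0}(R)$. For \ref{conclusion-nonhereditary:monoid}$\Rightarrow$\ref{conclusion-nonhereditary:one}, both $H$ and $\Vmon^\kappa(R)$ are $\aleph_1^-$-braided over $\add_{\aleph_0}(x) \cong \Vmon^{\aleph_0}(R)$ (the latter again by \cref{cor:projective-braided}\ref{pb:countable}), so by \cref{t:universal-exist} they are both the universal $\kappa$-extension of the common $\aleph_1^-$-monoid and hence canonically isomorphic.

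The hard part, as far as it is hard at all, is making sure that the bookkeeping in part \ref{hereditary} is consistent: specifically that the monoid $\add(x)$ appearing in \ref{conclusion-hereditary:monoid} really does acquire $x$ as an order-unit in the sense required by Bergman--Dicks, and that once we identify $\Vmon(R)$ with $\add(x)$ via Bergman--Dicks, the induced embedding of $\Vmon(R)$ into $\Vmon^\kappa(R)$ is compatible with the embedding of $\add(x)$ into $H$ in the sense needed for the universal property to yield an \emph{isomorphism} rather than merely a surjection. Both are routine once one notes that $\add(x)$ is a saturated submonoid of $H$ and apply \ref{ext-basic:sub} of \cref{l:ext-basic}; I will spell this out to pin down the isomorphism.
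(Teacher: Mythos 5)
Your proposal is correct and follows essentially the same route as the paper: for part (1) it combines the Bergman--Dicks realization of $\add(x)$ with \cref{cor:fin-braided} and the uniqueness of universal $\kappa$-extensions, and for part (2) it uses $\Vmon^{\aleph_0}(R)=\add_{\aleph_0}([R])$ together with \cref{cor:projective-braided} and the same uniqueness argument. The only difference is cosmetic: your appeal to saturatedness of $\add(x)$ and \cref{l:ext-basic} is unnecessary, since the braiding hypothesis already makes $H$ the universal $\kappa$-extension of $\add(x)$ via \cref{p:braiding-extend} and \cref{t:universal-exist}.
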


\begin{proof}
  \begin{proofenumerate}
  \item[\ref{hereditary}]
  \ref{conclusion-hereditary:monoid}$\,\Rightarrow\,$\ref{conclusion-hereditary:all}
  The monoid $\add(x)$ is a reduced monoid with order-unit.
  By the Bergman--Dicks realization result there exists a $k$-algebra $R$ such that $\Vmon(R) \cong \add(x)$.
  Now \cref{cor:fin-braided} implies that $\Vmon^{\kappa}(R)$ is braided over $\Vmon(R)$.
  Hence $\Vmon^{\kappa}(R)$ and $H$ are both the universal $\kappa$-extension of $\add(x)$ and therefore isomorphic as $\kappa$-monoids.

  \ref{conclusion-hereditary:all}$\,\Rightarrow\,$\ref{conclusion-hereditary:one} Trivial.
  
  \ref{conclusion-hereditary:one}$\,\Rightarrow\,$\ref{conclusion-hereditary:monoid}
  We have $\Vmon(R) = \add([R])$.
  By \cref{cor:fin-braided} the $\kappa$-monoid $\Vmon^{\kappa}(R)$ is braided over $\Vmon(R)$.
  
  \item[\ref{nonhereditary}]
  \ref{conclusion-nonhereditary:monoid}$\,\Rightarrow\,$\ref{conclusion-nonhereditary:one}
  By Corollary \ref{cor:fin-braided} and uniqueness of the universal $\kappa$-extension, we have $\Vmon^{\kappa}(R) \cong H$.

  \ref{conclusion-nonhereditary:one}$\,\Rightarrow\,$\ref{conclusion-nonhereditary:monoid}
  By \ref{pb:countable} of Corollary \ref{cor:projective-braided}. \qedhere
  \end{proofenumerate}
\end{proof}

In case there are projective modules that are not direct sums of finitely generated modules, the monoid $\Vmon(R)$ in general does not determine $\Vmon^{\aleph_0}(R)$.
However, in this case $\Vmon^{\aleph_0}(R)$ still determines $\Vmon^{\kappa}(R)$ for all $\kappa > \aleph_0$.

Herbera and Příhoda \cite{PavelHerbera,infinitepullback} as well as Herbera, Příhoda, and Wiegand \cite{HerberaPrihodaWiegand23} have recently studied the \emph{monoid} of countably generated projective modules $\Vmon^*(R)$ for several interesting classes of rings.
In these cases the monoids are typically submonoids of some $F_{\aleph_0}^n$, with the components induced from some dimension function, and so it is easy to see that the induced $\aleph_0$-monoid structure is the one coming from $F_{\aleph_0}^n$.
One may then apply our results to extend the description from $\Vmon^*(R)$ to $\Vmon^{\kappa}(R)$.

We discuss these as well as some other examples in the context of our results.

\begin{examples} \label{e:module-examples} \mbox{}
  \begin{enumerate}
  \item \textbf{Ascent of finite/countable KRSA to infinite KRSA.}
    Let $\cC$ be a class of modules closed under $\kappa$-indexed direct sums, direct summands, and isomorphisms.
    We also assume that the isomorphism classes in $\cC$ form a set.
    The class $\cC$ satisfies the Krull--Schmidt--Remak--Azumaya (KRSA) property for \emph{finite} direct sums if and only if every module is a finite direct sum of indecomposables and 
    \[
      \bigoplus_{i=1}^m A_i \cong \bigoplus_{i=1}^n B_i
    \]
    with indecomposable modules $A_i$,~$B_i \in \cC$ implies that $m=n$ and that there exists a permutation $\sigma$ such that $A_i \cong B_{\sigma(i)}$ for all $i$.
    KRSA for finite direct sums is equivalent to $\Vmon(\cC)$ being a free abelian monoid.

    Let $\cC_{\text{fg}}$ denote the subclass of finitely generated modules.
    If every module in $\cC$ is a direct sum of finitely generated modules, then $\Vmon^{\kappa}(\cC)$ is braided over $\Vmon(\cC_{\text{fg}})$ by \cref{t:module-braiding}.
    If $\Vmon(\cC_{\text{fg}})$ is a free abelian monoid, then $\Vmon^{\kappa}(\cC)$ is a free $\kappa$-monoid (\cref{l:ext-basic}).
    Thus, direct sums of indecomposable modules in $\cC$ indexed by arbitrary index sets are unique.
    In other words, our results show that finite KRSA implies infinite KRSA if every module in $\cC$ is a direct sum of finitely generated modules.

    Analogously, if every module in $\cC$ is a direct sum of $<\!\lambda$-generated modules for some regular cardinal $\lambda$ and $\Vmon^{\lambda^-}(\cC)$ is a free $\lambda^-$-monoid, then $\Vmon^{\kappa}(\cC)$ is a free $\kappa$-monoid for all $\kappa \ge \lambda$.
   
    In particular: if countable direct-sum decompositions of projective modules into countably generated indecomposable modules exist and are unique, then (arbitrary) direct-sum decompositions into indecomposable projective modules are unique. (Using Kaplansky's theorem to get that all indecomposable projectives are countably generated.)

    Suppose that $R$ has finite KRSA for finitely generated projective modules.
    Then \cref{t:module-braiding} together with \cref{l:ext-basic} shows that direct-sum decompositions of projective modules into direct sums of \emph{finitely generated} projective modules are unique.
    However, one must be careful in that direct sums of finitely-generated projective modules may in general also have indecomposable direct summands that are \emph{not} finitely generated, and \cref{t:module-braiding} does not imply anything about decompositions involving such modules.
    In terms of the $\kappa$-monoids, the $\kappa$-monoid $\langle \Vmon(R) \rangle_{\kappa}$ is braided over $\Vmon(R)$, but  $\big\langle\! \Vmon(R) \big\rangle_{\kappa}$ may not be closed under summands (in other words, divisor-closed) in $\Vmon^{\kappa}(R)$.
    
  \item \textbf{Direct summands of infinite direct sums.}
    When studying infinite direct sums in some class of modules, a natural question that arises is whether every direct summand of a direct sum of finitely generated modules must itself be finitely generated (see \cite[Question 1.1]{HerberaPrihodaWiegand23}).
    This question can now naturally be translated into the setting of $\kappa$-monoids as follows.
    Let $\cC$ denote a class of modules of interest (closed under direct sums over index sets of cardinality $\kappa$, direct summands, and isomorphism), and let $\cC_{\text{fg}}$ denote the subclass of finitely generated modules.
    The question then becomes: when is the $\kappa$-submonoid of $\Vmon^\kappa(\cC)$ generated by the submonoid $\Vmon(\cC_{\text{fg}})$ closed under summands (i.e., divisor-closed in the multiplicative language)?

  \item Suppose that $R$ is a ring over which every projective module is a direct sum of finitely generated modules.
    Then $\Vmon(R)$ fully determines $\Vmon^{\kappa}(R)$, and in particular $\Vmon^{\aleph_0}(R)$.

    \begin{itemize}
    \item If $\Vmon(R) \cong \bN_0$, then $\Vmon^{\aleph_0}(R) \cong \bN_0 \cup \{\infty\}$ by \ref{e:kappa-braided:N} of \cref{e:kappa-braided}.
    \item If $\Vmon(R)\cong\bQ_{\ge 0}$ or $\Vmon(R)\cong\bR_{\ge 0}$, then \ref{e:kappa-braided:R} and \ref{e:kappa-braided:Q} of \cref{e:kappa-braided} show
      \[
        \Vmon^{\aleph_0}(R) \cong \Vmon(R) \cup \widetilde{\bR}_{\ge 0} \cup \{\infty\}.
      \]
    \item If $\Vmon(R)$ is isomorphic to a submonoid of $\bN_0^n$, defined by homogeneous linear equations, inequalities, and congruences, then \cref{p:ext-diophantine} describes $\Vmon^{\aleph_0}(R)$ and further $\Vmon^{\kappa}(R)$ for $\kappa > \aleph_0$.
    \end{itemize}

  \item Let $R$ be a commutative hereditary domain, that is, a Dedekind domain.
    In this case Steinitz's theorem fully describes $\Vmon(R)$.
    Every finitely generated projective module $P \ne 0$ can be represented as $P \cong R^n \oplus I$ with $n \ge 0$ and $0 \ne I$ an ideal of $R$.
    The rank $n+1$ and the class $[I] \in G\coloneqq \Pic(R)$ uniquely determine $P$ up to isomorphism.
    Thus
    \[
      \Vmon(R) = \{0\} \cup \big(\mathbb (\bN_{\ge 1},+) \times G\big) = \big\{\, (n,g) \in \bN_0 \times G : n \ge 1 \text{ or } g = 0\,\big\}.
    \]
    Because $R$ is hereditary, every projective module is a direct sum of finitely generated projective modules.
    It is easy to check that two infinite families over $\Vmon(R)$ are braided if and only if their ranks (the first components) add up to the same cardinal.
    This follows inductively from the observation that $(n,g)$ is always a summand of $(m,h)$ if $1 \le n < m$.
    Thus
    \[
      \Vmon^{\kappa}(R) = \widehat{\Vmon(R)} \cong \{\, (\alpha,g) \in F_\kappa \times G : 1 \le \alpha < \aleph_0 \text{ or } g = 0\,\}.
    \]
    In particular, non-finitely generated projective modules are free.
    (Of course this is well-known, see for instance the next point.)

  \item 
    Let $R$ be a commutative noetherian ring that is connected (that is, it does not contain any non-trivial idempotent).
    By a well-known theorem of Bass \cite[Corollary 4.5]{Bass63} non-finitely generated projective modules over $R$ are free.
    Thus, in this case,
    \[
      \Vmon^{\kappa}(R) = \Vmon(R) \cup \{\, \lambda : \aleph_0 \le \lambda \le \kappa \,\}.
    \]

  \item \label{modexm:semilocalnoetherian}
    A ring $R$ is semilocal if $R/J(R)$ is semisimple (here $J(R)$ denotes the Jacobson radical).
    In this case $\Vmon(R/J(R)) \cong \bN_0^n$ and the map $R \to R/J(R)$ induces a monoid monomorphism $\Vmon(R) \to \Vmon(R/J(R))$, that is in fact even a divisor homomorphism.
    Then $\Vmon(R)$ is a Krull monoid and the embedding $\Vmon(R) \to \Vmon(R/J(R))$ allows one to study this monoid.

    Denote by $\Vmon^*(R)$ the monoid of countably generated projective modules.
    Herbera and Příhoda have shown that $\Vmon^*(R)$ can also be described as a submonoid of $\Vmon^*(R/J(R)) \cong F_{\aleph_0}^n$ (their notation for $F_{\aleph_0}$ is $\bN_0^*= \bN_0 \cup \{0\}$) by homogeneous linear equations and congruences, in case $R$ is noetherian \cite[Theorem 2.6]{PavelHerbera}.
    This involves the use of significant additional results from module theory, beyond the ones needed for the finitely generated case.
    
    If $R$ is non-noetherian one can also realize monoids defined by homogeneous linear inequalities \cite[Theorem 1.6]{infinitepullback}, but a full description of all possible $\Vmon^*(R)$ is not known.
    
    From the fact that the components of $\bN_0^n$ are induced by a dimension vector on modules over $R/J(R)$, it is easy to see that the monoids described by Herbera and Příhoda are in fact $\aleph_0$-submodules of $F_{\aleph_0}^n$.
    Thus their work fully describes $\Vmon^{\aleph_0}(R)$.
    It is now possible, with our purely monoid-theoretical methods, to extend this to a description of $\Vmon^{\kappa}(R)$: indeed $\Vmon^{\kappa}(R)$ is the $\kappa$-submonoid of $F_\kappa^n$ generated by the same linear equations, inequalities, and congruences as $\Vmon^{\aleph_0}(R)$ by \ref{ext-diophantine:kappa} of \cref{p:ext-diophantine}.

    In the special case that every projective module is a direct sum of finitely generated projective modules, the monoids $\Vmon^{\aleph_0}(R)$ in turn afford an easy description in terms of $\Vmon(R)$ \cite[Corollary 7.9]{PavelHerbera}.
    As one would expect, we can now recover this result using our monoid-theoretical methods (specifically, this is \ref{ext-diophantine:aleph} of \cref{p:ext-diophantine}).

  \item For hereditary noetherian prime (HNP) rings, a far reaching generalization of Steinitz's theorem describes (stable) isomorphism classes of finitely generated projective modules.
    This theory is developed in the monograph by Levy and Robson \cite{LevyRobson11}.
    Two modules $M$, $N$ are stably isomorphic if $M \oplus R^n \cong N \oplus R^n$ for some $n \ge 1$.
    In the case of HNP rings one can always take $n = 1$.
    We write $U(R)$, $U^{\kappa}(R)$, etc. for monoids ($\kappa$-monoids) of stable isomorphism classes of projective modules.

    If the uniform dimension of the modules is at least two, then stably isomorphic modules are isomorphic \cite[Theorem 34.6]{LevyRobson11}, and so the difference between $\Vmon^{\kappa}(R)$ and $U^{\kappa}(R)$ is not as big as one might initially fear.
    In particular, any stably isomorphism of an infinite direct sum will in fact be an isomorphism and so the conclusion of \cref{t:module-braiding} holds.
    Thus $U^{\kappa}(R)$ is braided over $U(R)$, and so in principle $U^{\kappa}(R)$ can be determined from $U(R)$ with monoid-theoretic methods.
    However, we do not carry this out here: $U^{\kappa}(R)$ has already been described  in detail \cite[Chapter 8]{LevyRobson11}.
    We merely describe the two resulting monoids.

    A description of $U(R)$ can be deduced from \cite[Chapter 6]{LevyRobson11}. These monoids and arithmetical invariants describing the factorization of elements therein (hence arithmetical invariants of direct-sum decompositions of finitely generated projective modules) were studied in \cite[Section 6]{BaethGeroldingerGrynkiewiczSmertnig15}, see in particular \cite[Theorem 6.5]{BaethGeroldingerGrynkiewiczSmertnig15}.
    For a reduced commutative cancellative monoid $D$ and an abelian group $G$ define
    \[
      D \propto G = \{\, (h,g) \in D \times G : h \ne 0 \text{ or } g = 0 \,\}.
    \]
    Let $\bN_0^{\Omega}(\mathbf c, \Lambda)$ be defined as in \cite[Definition 6.3]{BaethGeroldingerGrynkiewiczSmertnig15}, as a submonoid of $\bN_0^{\Omega}$ (the set $\Omega$ corresponds to the isomorphism classes of unfaithful simple modules; the vector $\mathbf c=(c_i)_{i \in \Omega} \in \bQ_{\ge 0}$ encodes the local ranks of $R$ at the unfaithful simple modules, and $\Lambda$ is a partition of $\Omega\setminus \{0\}$ encoding the structure of cycle towers).
    By \cite[Theorem 6.5]{BaethGeroldingerGrynkiewiczSmertnig15}
    \[
      U(R) \cong \bN_0^{\Omega}(\mathbf c, \Lambda) \propto G(R),
    \]
    with $G(R)$ the ideal class group of $R$.
    Then $U^{\kappa}(R) \setminus U(R) \subseteq F_{\kappa}^{\Omega}$ consists of all $\mathbf x=(x_i)_{i \in \Omega} \in F_{\kappa}^{\Omega}$ for which $x_0$ is infinite, and the following conditions are satisfied:
    \begin{itemize}
    \item $x_i \le x_0$ for all $i  \in I$,
    \item for every infinite cardinal $\alpha < x_0$, there are only finitely many $i \in I$ with $x_i \le \alpha$, and
    \item for every finite $n$, there are only finitely many $i \in I$ with $x_i \le n c_i$.
    \end{itemize}
    This follows from \cite[Theorems 45.7, 45.13, and 46.1]{LevyRobson11}.

    If $R$ is a Dedekind prime ring, then $\bN_0^{\Omega}(\mathbf c, \Lambda)=\bN_0$ and the description becomes much more simple; indeed
    \[
      U^{\kappa}(R) = (\bN_0 \propto G(R)) \cup \{\, \lambda : \aleph_0 \le \lambda \le \kappa \,\},
    \]
    and the only (possible) difference to the commutative case is that stably isomorphic uniform right ideals need not be isomorphic.
  \end{enumerate}
\end{examples}

\section{Realization to hereditary rings for two-generated \texorpdfstring{$\aleph_0$}{ℵ₀}-monoids} \label{sec:twogen}

Over a ring where every projective module is a direct sum of finitely generated modules (such as a hereditary ring), the $\aleph_0$-monoid $\Vmon^{\aleph_0}(R)$ is completely determined by the monoid $\Vmon(R)$ (\cref{conclusion}).
Every reduced monoid $H$ with an order-unit is isomorphic to $\Vmon(R)$ for a hereditary $k$-algebra for an arbitrary field $k$, by a theorem of Bergman and Dicks (see \cref{sec:modules}).
Thus, understanding the $\aleph_0$-monoids $H$ that can be realized as $\Vmon^{\aleph_0}(R)$ of a hereditary ring amounts to understanding the universal $\aleph_0$-extensions of reduced commutative monoids with order-unit.

It is easily seen that a cyclic $\aleph_0$-monoid with generator $x$ can be realized as $\Vmon^{\aleph_0}(R)$ of a hereditary ring if and only if $\aleph_0 x \ne n x$ for all $n \in \bN_0$.
Equivalently, the monoid is of the form $C \cup \{\infty\}$ with $C$ a cyclic monoid (these monoids are described in \cref{ss:free-modules}).
In this section we give a description of all two-generated $\aleph_0$-monoids that can be realized as $\Vmon^{\aleph_0}(R)$ of a hereditary ring.

Throughout the section, let $H$ be a non-cyclic $\aleph_0$-monoid generated by two elements $x_1$ and $x_2$.
Then every $y \in H$ can be represented as $y = \sum_{k \in \aleph_0} y_k$ with $y_k$ equal to either $x_1$ or $x_2$.
Since the order of the elements in the family $(y_k)_{k \in \aleph_0}$ does not matter, it is convenient to represent such a family as $\alpha X_1 + \beta X_2$, where $0 \le \alpha \le \aleph_0$ is the number of elements in the family that are equal to $x_1$, and $0 \le \beta \le \aleph_0$ is the number of elements that are equal to $x_2$.
We call such a representation $\alpha X_1 + \beta X_2$ a \defit{form}.
Each form $\alpha X_1 + \beta X_2$ represents an element $\alpha x_1 + \beta x_2 \in H$ (but of course, multiple forms may represent the same element).
We say that $\alpha X_1 + \beta X_2$ is an \defit{infinite form} if at least one of $\alpha$,~$\beta$ is infinite, and a \defit{finite form} otherwise.
Depending on $H$, an element may have both infinite and finite forms.

We start with a number of basic observations.

\begin{lemma} \label{l:twogen-braided}
  Suppose $H \cong \Vmon^{\aleph_0}(R)$ for some ring $R$ over which projective modules are direct sums of finitely generated modules.
  Then $x_1$ and $x_2$ correspond to finitely generated modules, the $\aleph_0$-monoid $H$ is braided over $\add(x_1+x_2)$, and
  \[
    \Vmon(R) \cong \add(x_1+x_2)=\langle x_1,x_2 \rangle.
  \]
\end{lemma}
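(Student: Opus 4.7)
My plan is to invoke the general theory from \cref{sec:modules} to reduce the problem to identifying $\Vmon(R)$ concretely inside $H$, after which the three asserted claims will all follow quickly. The hypothesis that $H$ is \emph{non-cyclic} will play a decisive role, and the one substantive input is an elementary module-theoretic fact.

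First, I would apply \cref{cor:fin-braided}: because projective $R$-modules are direct sums of finitely generated modules, $H = \Vmon^{\aleph_0}(R)$ is braided over $\Vmon(R)$. Once I show $\Vmon(R) = \add(x_1+x_2)$, the braiding claim in the lemma will be immediate, and the first and third claims will also be contained in that identification.

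The core step is to show that $x_1, x_2$ correspond to finitely generated modules, and more generally that $\Vmon(R) = \langle x_1, x_2 \rangle$. Let $M_i$ be a countably generated projective representing $x_i$. Since $x_1, x_2$ $\aleph_0$-generate $H$, I can write $[R] = \alpha x_1 + \beta x_2$ for some cardinals $\alpha, \beta \le \aleph_0$, which translates into an isomorphism $R \cong M_1^{(\alpha)} \oplus M_2^{(\beta)}$. The key point I would exploit is the elementary observation that for any module $M \ne 0$ the direct sum $M^{(\aleph_0)}$ is not finitely generated (any finite generating set is supported on a finite subsum, which is a proper submodule). Hence $\alpha = \aleph_0$ would force $M_1 = 0$, giving $x_1 = 0$ and making $H$ cyclic—contrary to hypothesis. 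Both $\alpha$ and $\beta$ must therefore be finite, so $M_1, M_2$ are direct summands of $R^{\max(\alpha,\beta)}$, hence finitely generated projective. The same cardinality argument, applied to any $v = [P] \in \Vmon(R)$ written in $H$ as $v = \gamma x_1 + \delta x_2$, forces $\gamma, \delta$ finite, giving $\Vmon(R) \subseteq \langle x_1, x_2 \rangle$; the reverse inclusion is immediate since $x_1, x_2 \in \Vmon(R)$.

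Finally, $\langle x_1, x_2 \rangle = \add(x_1+x_2)$ is a routine monoid computation: any $\alpha x_1 + \beta x_2$ with $\alpha, \beta \in \bN_0$ is a summand of $\max(\alpha,\beta)(x_1+x_2)$, and conversely any summand of $n(x_1+x_2) \in \Vmon(R)$ lies in $\Vmon(R) = \langle x_1, x_2 \rangle$ (using that $\Vmon(R)$ is divisor-closed in $H$, since summands of finitely generated projectives are finitely generated projective). The only substantive point—really more a mnemonic than a real obstacle—is the observation that $M^{(\aleph_0)}$ being finitely generated forces $M = 0$. Beyond that, the proof is pure bookkeeping, combining \cref{cor:fin-braided} with the two-generator, non-cyclic hypothesis.
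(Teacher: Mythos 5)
Your overall strategy — identify $\Vmon(R)$ with $\add(x_1+x_2)=\langle x_1,x_2\rangle$ and then quote the general braiding result — is the same as the paper's, but there is a genuine gap in the step establishing that $x_1$ and $x_2$ correspond to finitely generated modules. From $[R]=\alpha x_1+\beta x_2$ you correctly rule out $\alpha=\aleph_0$ and $\beta=\aleph_0$, but the inference ``so $M_1,M_2$ are direct summands of $R^{\max(\alpha,\beta)}$, hence finitely generated'' is only valid for a generator that occurs with \emph{nonzero} multiplicity in this particular decomposition. Nothing forces both generators to appear: for instance $x_1$ could simply be $[R]$ itself, in which case $\alpha=1$, $\beta=0$ and the equation $R\cong M_1^{(\alpha)}\oplus M_2^{(\beta)}$ gives no information whatsoever about $M_2$. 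Your later remark that ``the reverse inclusion is immediate since $x_1,x_2\in\Vmon(R)$'' is exactly the unproved assertion, so it cannot repair this step.

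To close the gap you must use the hypothesis that every projective module is a direct sum of finitely generated modules a second time, not only through the braiding corollary: it implies that $H$ is generated as an $\aleph_0$-monoid by $\Vmon(R)$. Your own finiteness argument shows that every $[P]\in\Vmon(R)$ equals $\gamma x_1+\delta x_2$ with $\gamma,\delta$ finite; if, say, $M_1$ were never a direct summand of a finitely generated projective module, then every such representation would have $\gamma=0$, so $\Vmon(R)\subseteq\{\,\delta x_2:\delta\in\bN_0\,\}$, and hence $H=\langle \Vmon(R)\rangle_{\aleph_0}$ would be cyclic, contradicting the standing hypothesis. This is precisely the paper's argument (both $P_1$ and $P_2$ must show up as direct summands of some finitely generated module, for otherwise $\Vmon(R)$, and with it $\Vmon^{\aleph_0}(R)$, would be cyclic); with it, $M_1$ and $M_2$ are summands of finitely generated modules, hence finitely generated, and the rest of your bookkeeping goes through. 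A minor point: the braiding statement you need is \ref{pb:fg} of \cref{cor:projective-braided} (equivalently \cref{t:module-braiding} with $\lambda=\aleph_0$); \cref{cor:fin-braided} lists specific ring-theoretic conditions rather than the hypothesis of the lemma.
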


\begin{proof}
  Let $[P_i]$ be the image of $x_i$ under the isomorphism.
  As $x_1$,~$x_2 \ne 0$ by assumption, the modules $P_1$ and $P_2$ must both be nonzero.
  
  Every projective module $M$ can be expressed as $M \cong P_1^{(\alpha)} \oplus P_2^{(\beta)}$ for some $0 \le \alpha$,~$\beta \le \aleph_0$.
  If $M$ is finitely generated, then $\alpha$ and $\beta$ must both be finite.
  Furthermore, both $P_1$ and $P_2$ must show up as direct summands of some finitely generated modules; for otherwise $\Vmon(R)$ would be cyclic, and hence so would be $\Vmon^{\aleph_0}(R)$, as it is generated as $\aleph_0$-monoid by $\Vmon(R)$.
  We conclude that $P_1$ and $P_2$ are finitely generated and $\Vmon(R) = \big\langle [P_1], [P_2] \big\rangle = \add([P_1]+[P_2])$.
  By \ref{hereditary} of \cref{conclusion}, the $\aleph_0$-monoid $\Vmon^{\aleph_0}(R)$ is braided over $\add([R])=\add([P_1]+[P_2])$.
\end{proof}

\begin{lemma} \label{easyfactlemma1} \mbox{}
\begin{enumerate}
  \item\label{efl1:inf-fin} An infinite and a finite form cannot be braided.
  \item\label{efl1:fin} Two finite forms of an element are braided over $\add(x_1 + x_2)$.
  \item\label{efl1:add-inclusion} If $x_i \in \add(x_j)$ and no element in $H$ has an infinite and a finite form, then the two forms  $\alpha X_i + \aleph_0 X_j$ and $\beta X_i + \aleph_0 X_j$ are braided over $\add(x_1 + x_2)$ for every $0 \le \alpha$,~$\beta \le \aleph_0$.
  \item\label{efl1:add-incomparable} Suppose $x_i \not\in \add(x_j)$ and no element in $H$ has an infinite and a finite form.
    If $m X_i + \aleph_0 X_j$ and $n X_i + \aleph_0 X_j$ with $m$,~$n$ finite are braided over $\add(x_1+x_2)$, then $m x_i + k x_j = n x_i + k' x_j$ for some finite $k$, $k'$.
  \item\label{efl1:add-inclusion-iff} Let $H$ be braided over $\add (x_1 + x_2)$.
    Then we have $x_i \in \add(x_j)$ if and only if $\aleph_0 (x_1 + x_2)  = \aleph_0 x_j$. 
\end{enumerate}
\end{lemma}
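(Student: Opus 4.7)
The plan is to handle the five items through repeated use of a telescoping identity implicit in the definition of braiding, combined with reducedness (\cref{infinite}) and the collapse properties of \cref{l:braiding-collapse}. Specifically, given a braiding of $(a_k)_{k\in\aleph_0}$ and $(b_k)_{k\in\aleph_0}$ with partitions $(I_\mu),(J_\mu)$ and families $(u_\mu),(v_\mu)$, summing the braiding equations for $\mu\le\mu_0$ and using $v_0=0$ yields
\[
  \sum_{\mu \le \mu_0}\sum_{k \in J_\mu} b_k \;=\; v_{\mu_0+1} + \sum_{\mu \le \mu_0}\sum_{k \in I_\mu} a_k.
\]
The no-coexistence hypothesis of~\ref{efl1:add-inclusion} and~\ref{efl1:add-incomparable} enters through the observation that every element of $\add(x_1+x_2)$, being a summand of a finite multiple of $x_1+x_2$, admits a finite form $cx_1+dx_2$.

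For~\ref{efl1:inf-fin}, suppose $(a_k)$ has finite support $S$ while $(b_k)$ has infinite support, and they are braided. The set $K\coloneqq\{\mu:I_\mu\cap S\ne\emptyset\}$ is finite, and for $\mu\notin K$ one has $\sum_{I_\mu}a_k=0$, forcing $u_\mu=v_\mu=0$ by reducedness. Whenever $\mu\notin K$ and $\mu+1\notin K$, the equation $\sum_{J_\mu}b_k=u_\mu+v_{\mu+1}$ collapses to $0$, so $b_k=0$ for $k\in J_\mu$; then $\supp(b_k)$ lies in a finite union of finite $J_\mu$'s, contradicting infinite support. Item~\ref{efl1:fin} follows immediately from \cref{l:braiding-collapse}\ref{coll:trivial}. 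The forward direction of~\ref{efl1:add-inclusion-iff} is obtained by applying \cref{infinite}\ref{infinite:split} to $x_i+y=nx_j$ to deduce $x_i+\aleph_0x_j=\aleph_0x_j$, whence $\aleph_0(x_1+x_2)=\aleph_0x_i+\aleph_0x_j=\aleph_0x_j$. For the converse, both the alternating family $(x_1,x_2,x_1,x_2,\dots)$ and $(x_j,x_j,\dots)$ sum to $\aleph_0x_j$, hence are braided. Choosing $\mu_0$ so that $\bigcup_{\mu\le\mu_0}I_\mu$ meets an index at which $x_i$ appears, the telescoping identity reads $Nx_j=v_{\mu_0+1}+\alpha x_1+\beta x_2$ with $\alpha\ge 1$ (or $\beta\ge 1$) finite, exhibiting $x_i$ as a summand of $Nx_j$ and so $x_i\in\add(x_j)$.

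Item~\ref{efl1:add-inclusion} is the technical heart and requires an explicit construction. The no-coexistence hypothesis forces the $y$ in $x_i+y=nx_j$ to admit a finite form $y=ax_i+bx_j$, giving the local identity $(a+1)x_i+bx_j=nx_j$. By transitivity of braiding (\cref{l:braiding-transitive}) and permutation-invariance (\cref{l:braiding-basic}\ref{b-basic:commutative}), it suffices to braid each $\alpha X_i+\aleph_0X_j$ with $\aleph_0X_j$ after rearranging the first family conveniently. For $\alpha=1$: set $I_0=\{0\}$, $u_0=x_i$, $v_1=y$, $|J_0|=n$, and iterate for $k\ge 1$ with $u_k=x_i$, $v_k=y$, and $|I_k|=|J_k|=n$, so that each block equation is an instance of the local identity. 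For finite $\alpha\ge 2$ one analogously takes $u_k=\alpha x_i$, $v_k=\alpha y$, and block sizes $\alpha n$. For $\alpha=\aleph_0$ one rearranges the first family into consecutive blocks containing $a+1$ copies of $x_i$ and $b$ copies of $x_j$ (so each block sums to $nx_j$), matched with blocks of $n$ in the second family; when $b=0$, interleave filler blocks pairing a single $x_j$ from each family. The main obstacle is the careful bookkeeping to ensure that the partitions exhaust $\aleph_0$ while keeping every $|I_\mu|,|J_\mu|$ finite.

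For~\ref{efl1:add-incomparable}, given the assumed braiding, pick $\mu_0$ large enough that $\bigcup_{\mu\le\mu_0}I_\mu\supseteq\{0,\dots,m-1\}$, $\bigcup_{\mu\le\mu_0}J_\mu\supseteq\{0,\dots,n-1\}$, and additionally $I_{\mu_0+1}\cap\{0,\dots,m-1\}=\emptyset$ (all cofinitely often). Writing $v_{\mu_0+1}=cx_i+dx_j$ in its finite form, the telescoping identity yields
\[
  nx_i+Qx_j \;=\; (m+c)x_i+(P+d)x_j
\]
for finite $P,Q$. The auxiliary equation $\sum_{I_{\mu_0+1}}a_k=u_{\mu_0+1}+v_{\mu_0+1}$ has left side a pure multiple of $x_j$: if the $x_i$-coefficient $c$ were positive, the equation would exhibit $x_i$ as a summand of a multiple of $x_j$, contradicting $x_i\notin\add(x_j)$. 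Hence $c=0$ and $mx_i+(P+d)x_j=nx_i+Qx_j$ is the required finite identity.
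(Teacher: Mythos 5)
Your proof is correct and follows essentially the same route as the paper's: explicit block braidings built from a relation $x_i+y=nx_j$ (with $y$ given a finite form via the no-coexistence hypothesis in the $\alpha=\aleph_0$ case) for \ref{efl1:add-inclusion}, the telescoping identity together with the auxiliary block equation and no-coexistence for \ref{efl1:add-incomparable}, and the same direct arguments for \ref{efl1:inf-fin}, \ref{efl1:fin}, \ref{efl1:add-inclusion-iff}. The only deviations are cosmetic — e.g.\ your filler blocks for $b=0$ where the paper instead adds $x_j$ to both sides, and your citation of \cref{infinite}\ref{infinite:split} strictly needs the one-line intermediate step $x_i+(y+\aleph_0 x_j)=\aleph_0 x_j$ before it applies — and none of this affects correctness.
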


\begin{proof}
  \begin{proofenumerate}
  \item[\ref{efl1:inf-fin}] and \ref{efl1:fin} are clear.
  
  \item[\ref{efl1:add-inclusion}]
  Since the braiding relation is transitive and symmetric, it suffices to consider the case $\beta=0$.
  First consider the case in which $\alpha$ is finite.
  We want to show that $\alpha X_i + \aleph_0 X_j$ and $\aleph_0 X_j$ are braided. 
  Since $\alpha$ is finite and $x_i \in \add(x_j)$, the element $\alpha x_i$ is in $\text{add} (x_j)$.
  Thus, for some $t \in H$ and integer $n \geq 1$, we have $nx_j = t + \alpha x_i$.
  The form $\alpha X_i + \aleph_0 X_j$ corresponds to a family $(y_k)_{k \in \aleph_0}$, where $y_k = x_i$ for $0 \leq k \leq \alpha-1$ and $y_k = x_j$ for $k \geq \alpha$.
  To show that $(y_k)_{k \in \aleph_0}$ and the constant family $(x_j)_{k \in \aleph_0}$ are braided, define partitions
  \[
    \begin{split}
      I_0 & \coloneqq \{ 0, \ldots, \alpha-1\},\\
      I_k &\coloneqq \{ \alpha + n(k-1), \ldots, \alpha + n (k - 1) + n - 1 \} \quad\text{for $k \ge 1$, and}\\
      J_k &\coloneqq \{ n k, \ldots,  n k + n - 1 \} \quad\text{for $k \ge 0$}.
    \end{split}
  \]
  Setting $u_k \coloneqq \alpha x_i$ for every $k \ge 0$ and $v_k \coloneqq t$ for every $k \ge 1$ (with $v_0=0$), gives the desired braiding families.
  
  Now consider the case $\alpha = \aleph_{0}$.
  There exists an integer $m >0$ and $t \in H$ such that $m x_j = x_i + t$.
  Since $H$ is generated by $x_1$ and $x_2$, we can write $t = m' x_i + n' x_j$.
  We have that $m'$ and $n'$ are finite, because otherwise $m x_j$ would have a finite and an infinite form, contradicting our assumption.
  Now $m x_j = (m' + 1) x_i + n' x_j$.
  Adding $x_j$ to both sides if necessary, we may assume $m$,~$n' > 0$.
  Consider the constant family $(x_j)_{k \in \aleph_0}$ and the family $(y_k)_{k \in \aleph_0}$ where
  \[
    y_k =
    \begin{cases}
      x_i & \text{for $l(m'+n'+1) \le k \le l(m'+n'+1) + m'$} \\
      x_j & \text{for $l(m'+n'+1)+m'+1 \le k \le l(m'+n'+1)+m'+n'$}
    \end{cases}
  \]
  for all $l \ge 0$.
  Then
  \[
    \sum_{r=0}^{m'+n'} y_{l(m'+n'+1)+r} = (m'+1)x_i + n'x_j= m x_j,
  \]
  and it follows easily that the two families are braided.

  \item[\ref{efl1:add-incomparable}]
  Let $(I_\mu)_{\mu \in \aleph_0}$ and $(J_\mu)_{\mu \in \aleph_0}$ be braiding partitions of $m X_i + \aleph_0 X_j$ and $n X_i + \aleph_0 X_j$ with corresponding braiding families $(u_\mu)_{\mu \in \aleph_0}$ and $(v_\mu)_{\mu \in \aleph_0}$.
  Choose a finite $\alpha \ge 0$ such that $\bigcup_{\mu \le \alpha} I_\mu$ and $\bigcup_{\mu \le \alpha} J_\mu$ cover all of the finitely many copies of $x_i$.
  Then there exist finite $m'$, $n'$ such that
  \[
    m x_i + m' x_j + v_{\alpha+1} = \sum_{\mu \le \alpha} (u_\mu + v_\mu) + v_{\alpha + 1} = \sum_{\mu \le \alpha} u_{\mu} + v_{\mu + 1} = n x_i + n' x_j.
  \]
  Further $u_{\alpha+1} + v_{\alpha +1}$ must be a finite multiple of $x_j$, so that $v_{\alpha + 1} \in \add(x_j)$, say, $v_{\alpha+1} + w = rx_j$ for some $w \in H$ and some integer $r$.

  Because $x_1$ and $x_2$ generate $H$, we can write $v_{\alpha+1} = \beta x_i + \gamma x_j$ with $0 \le \beta$,~$\gamma \le \aleph_0$.
  Now $x_i \not \in \add(x_j)$ and $v_{\alpha+1} \in \add(x_j)$ implies $\beta = 0$.
  Applying the property that no element has an infinite and a finite form to $rx_j$, we see that $\gamma$ is finite.
  Thus $m x_i + (m'+\gamma) x_j = n x_i + n'x_j$.

  \item[\ref{efl1:add-inclusion-iff}]
  Clearly $x_i \in \add(x_j)$ implies $\aleph_0 x_j = \aleph_0(x_1 + x_2)$.
  Suppose $\aleph_0 (x_1 + x_2) = \aleph_0 x_j$.
  Since $H$ is braided over $\add(x_i + x_j)$, we conclude that there exist positive integers $m$,~$n$, and $t \in \add(x_1+x_2)$ such that $n x_j = m (x_1 +  x_2) + t$.
  Thus, $x_i \in \text{add}(x_j)$. \qedhere
  \end{proofenumerate}
\end{proof}

The following describes two-generated $\aleph_0$-monoids realizable over a hereditary ring.

\begin{teor} \label{hereditarycasecor}
  A non-cyclic $\aleph_0$-monoid $H$ generated by two elements $x_1$,~$x_2$ is isomorphic to $\Vmon^{\aleph_0}(R)$ for a hereditary ring $R$, if and only if the following conditions hold for $1 \leq i \neq j \leq 2$.
  \begin{enumerate}[label=\textup(\roman*\textup),leftmargin=2.5em]
  \item \label{hcc:threeinf} \label{hcc:first} $n x_i + \aleph_0 x_j = \aleph_0 x_i + \aleph_0 x_j$ with $n$ finite implies $\aleph_0 x_j = \aleph_0 x_i +\aleph_0 x_j$ and $x_i \in \add(x_j)$.
  \item \label{hcc:twoinf} If $x_i \not \in \add(x_j)$ and $m x_i + \aleph_0 x_j = n x_i + \aleph_0 x_j$ with $m$,~$n$ finite, then there exist finite $k$,~$k'$ such that $m x_i + k x_j = n x_i + k' x_j$.
  \item \label{hcc:finite-infinite} \label{hcc:last} An element of $H$ cannot have both finite and infinite forms.
  \end{enumerate}
\end{teor}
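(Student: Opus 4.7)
The plan is to reduce the theorem to the braiding criterion of \cref{conclusion}\ref{hereditary}, which says that $H \cong \Vmon^{\aleph_0}(R)$ for a hereditary ring $R$ if and only if $H$ is braided over $\add(y)$ for some $y \in H$. Since $H$ is two-generated, \cref{l:twogen-braided} forces the choice $y = x_1 + x_2$ in the forward direction, so it suffices to show that conditions \textup{(i)}--\textup{(iii)} are equivalent to $H$ being braided over $\add(x_1 + x_2)$.

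For necessity, assume $H$ is braided over $\add(x_1 + x_2)$. Condition (iii) is immediate from \cref{easyfactlemma1}\ref{efl1:inf-fin}: two forms of the same element must be braided, and a finite form cannot braid with an infinite one. Condition (ii) is then exactly \cref{easyfactlemma1}\ref{efl1:add-incomparable}, now that (iii) is available. The derivation of (i) is the delicate step, because none of the parts of \cref{easyfactlemma1} apply directly to a form with $\aleph_0$ copies of $x_i$. Given a braiding of $n X_i + \aleph_0 X_j$ and $\aleph_0 X_i + \aleph_0 X_j$ with braiding families $(u_\mu)$, $(v_\mu)$ in $\add(x_1 + x_2)$, I will choose $\mu$ large enough that $I_{\mu'}$ contains no $x_i$ for all $\mu' \geq \mu$ (possible because only $n$ copies of $x_i$ appear in the first family) and such that $J_\mu$ contains at least one $x_i$ (possible because the second family distributes $\aleph_0$ copies of $x_i$ among finite sets). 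Then $u_\mu + v_\mu$ and $u_{\mu+1} + v_{\mu+1}$ are both finite multiples of $x_j$, so $u_\mu, v_{\mu+1} \in \add(x_j)$; their sum $u_\mu + v_{\mu+1}$ therefore lies in $\add(x_j)$ while also having $x_i$ as a summand, forcing $x_i \in \add(x_j)$ by transitivity of $\add$. A short computation using a relation $x_i + y = k x_j$ and the absorption $\aleph_0 x_i + \aleph_0 x_i = \aleph_0 x_i$ then gives $\aleph_0 x_i + \aleph_0 x_j = \aleph_0 x_j$.

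For sufficiency, I must show that any two families in $\add(x_1 + x_2)$ with equal $\aleph_0$-sum in $H$ are braided. Every such family admits a trivial refinement to a family in $\{0, x_1, x_2\}$, so it is enough to braid two forms $\alpha X_1 + \beta X_2$ and $\gamma X_1 + \delta X_2$ representing the same element. A case distinction on the infinitude of the coefficients handles most cases via \cref{easyfactlemma1}: two finite forms by \ref{efl1:fin}; mixed finite/infinite is excluded by (iii); two infinite forms with the same infinite coordinate are handled by \ref{efl1:add-inclusion} when $x_i \in \add(x_j)$, and otherwise by (ii), which yields a finite relation $m x_i + k x_j = n x_i + k' x_j$ from which an explicit braiding is constructed in one nontrivial step. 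Pairs of the shape $\aleph_0 X_1 + \aleph_0 X_2$ versus $n X_i + \aleph_0 X_j$ are treated by combining (i) with \ref{efl1:add-inclusion}. The fully mixed case $m X_1 + \aleph_0 X_2$ versus $\aleph_0 X_1 + n X_2$ is reduced to the previous ones by observing, via absorption, that the common element equals $\aleph_0 x_1 + \aleph_0 x_2$, and then applying (i) symmetrically in both indices and chaining the resulting braidings.

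The hard part will be the necessity of (i): unlike (ii) and (iii) it does not drop out formally from \cref{easyfactlemma1}, and one has to reach inside a braiding to locate an index $\mu$ where the finitely many $x_i$'s of the first family have been exhausted but a fresh $x_i$ appears in the matching chunk of the second family; this precise matching is what produces the inclusion $x_i \in \add(x_j)$ demanded by condition (i).
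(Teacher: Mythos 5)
Your proposal is correct and follows essentially the same route as the paper: both directions are reduced to $H$ being braided over $\add(x_1+x_2)$ via \cref{conclusion} and \cref{l:twogen-braided}, conditions \ref{hcc:twoinf} and \ref{hcc:finite-infinite} are read off from \cref{easyfactlemma1}, condition \ref{hcc:threeinf} is extracted by locating, inside a braiding of $nX_i+\aleph_0 X_j$ and $\aleph_0 X_i+\aleph_0 X_j$, a late index where the first family contributes only copies of $x_j$ while the matching chunk of the second family contains a copy of $x_i$ (exactly the paper's argument), and the sufficiency case analysis coincides with the paper's, including the reduction of the mixed case $\aleph_0 X_1+nX_2$ versus $mX_1+\aleph_0 X_2$ to $\aleph_0 X_1+\aleph_0 X_2$ and the explicit braiding built from the finite relation supplied by \ref{hcc:twoinf}. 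The only point worth making explicit is the one the paper states up front: condition \ref{hcc:finite-infinite} forces every element of $\add(x_1+x_2)$ to have a finite form, i.e.\ $\add(x_1+x_2)=\langle x_1,x_2\rangle$, which is what justifies your ``trivial refinement'' of arbitrary families in $\add(x_1+x_2)$ into families with entries in $\{0,x_1,x_2\}$ using only finite chunks.
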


\begin{proof}
  First suppose that $H \cong \Vmon^{\aleph_0}(R)$ for a hereditary ring $R$.
  Then $H$ is braided over $\add(x_1+x_2)$ by \cref{l:twogen-braided}.
  Thus \ref{hcc:finite-infinite} follows from \cref{easyfactlemma1}\ref{efl1:inf-fin}.
  Consider condition~\ref{hcc:twoinf}.
  If $m x_i + \aleph_0 x_j=n x_i + \aleph_0 x_j$, then $m X_i + \aleph_0 X_j$ and $n X_i + \aleph_0 X_j$ are braided.
  Hence \cref{easyfactlemma1}\ref{efl1:add-incomparable} yields \ref{hcc:twoinf}.

  We check \ref{hcc:threeinf}.
  If $n x_i + \aleph_0 x_j = \aleph_0 x_i + \aleph_0 x_j$, then the forms $n X_i + \aleph_0 X_j$ and $\aleph_0 X_i + \aleph_0 X_j$ are braided.
  Label the first family, namely $n X_i + \aleph_0 X_j$, by $(y_i)_{i \in \aleph_0}$ in some arbitrary way.
  Suppose we have braiding partitions $(I_\mu)_{\mu \in \aleph_0}$ and $(J_\mu)_{\mu \in \aleph_0}$.
  Since all but finitely many elements of $(y_k)_{k \in \aleph_0}$ are equal to $x_j$, we must have $\sum_{i \in I_\mu} y_i = \card{I_\mu} x_j$ for all but finitely many $\mu \in \aleph_0$.
  However, the form $\aleph_0 X_i + \aleph_0 X_j$ contains infinitely many copies of $x_i$, and so necessarily $x_i \in \add(x_j)$ from the braiding relation.
  Now \cref{easyfactlemma1}\ref{efl1:add-inclusion} implies that $\aleph_0 X_j$ and $\aleph_0 X_i + \aleph_0 X_j$ are braided.
  Hence $\aleph_0 x_j = \aleph_0 x_i + \aleph_0 x_j$, and the claim follows.
  
  Now suppose that $H$ satisfies conditions \ref{hcc:first}--\ref{hcc:last}.
  To show that $H \cong \Vmon^{\aleph_0}(R)$ for some hereditary ring $R$, it suffices to show that $H$ is braided over $\add(x_1+x_2)$, by \ref{hereditary} of \cref{conclusion}.
  By condition~\ref{hcc:finite-infinite}, elements of $\add(x_1+x_2)$ can only have finite forms.
  Hence $\add(x_1+x_2) = \langle x_1, x_2 \rangle$.
  It therefore suffices to prove: if $\alpha x_i + \beta x_j =  \alpha' x_i + \beta' x_j$, then $\alpha X_i + \beta X_j$ and $\alpha' X_i + \beta' X_j$ are braided.
  By condition \ref{hcc:finite-infinite}, it suffices to consider the following cases, where $m$, $n$, $m'$, $n'$ are finite.

  \begin{itemize}
  \item  $m x_i + n x_j = m' x_i + n' x_j$:
    In this case $m X_i + n X_j$ and $m' X_i + n' X_j$ are braided by \ref{efl1:fin} of \cref{easyfactlemma1}.
  \item $m x_i + \aleph_0 x_j = m' x_i + \aleph_0 x_j$:
    If $x_i \in \add(x_j)$, then \cref{easyfactlemma1}\ref{efl1:add-inclusion} implies that $m X_i + \aleph_0 X_j$ and $m' X_i + \aleph_0 X_j$ are braided.
    Suppose $x_i \not \in \add(x_j)$.
    Then condition \ref{hcc:twoinf} implies $m x_i + k x_j = m' x_i + k' x_j$ for some finite $k$,~$k'$.
    From this we can easily construct a braiding of $m X_i + \aleph_0 X_j$ and $m' X_i + \aleph_0 X_j$.

  \item $\aleph_0 x_i + n x_j = m x_i + \aleph_0 x_j$:
    Adding $\aleph_0 x_j$, respectively, $\aleph_0 x_i$ to both sides, condition \ref{hcc:threeinf} implies $x_i \in \add(x_j)$ and $x_j \in \add(x_i)$.
    Then \Cref{easyfactlemma1}\ref{efl1:add-inclusion} shows that $\aleph_0 X_i + n X_j$ and $\aleph_0 X_i + \aleph_0 X_j$ are braided.
    The forms $m X_i + \aleph_0 X_j$ and $\aleph_0 X_i + \aleph_0 X_j$ are braided by the same lemma.
    Now transitivity and symmetry of the braiding relation implies that $\aleph_0 X_i + n X_j$ and $m X_i + \aleph_0 X_j$ are braided.

  \item $m x_i + \aleph_0 x_j = \aleph_0 x_i + \aleph_0 x_j$:
    We get $\aleph_0 x_j = \aleph_0 x_i + \aleph_0 x_j$ and $x_i \in \add(x_j)$ from condition~\ref{hcc:threeinf}.
    \Cref{easyfactlemma1}\ref{efl1:add-inclusion} shows that $m X_i + \aleph_0 X_j$ and $\aleph_0 X_i + \aleph_0 X_j$ are braided.
  \end{itemize}
  Thus, in each case two forms representing the same element are braided, and so $H$ is braided over $\add(x_i+x_j)$.
\end{proof}

Conditions of the type $x_i \in \add(x_j)$ and $x_i \not \in \add(x_j)$ appear in the previous theorem, and as they are natural conditions in terms of the modules, it makes some sense to distinguish these cases and look at realizability in each them separately.
For this we first need a result on trace ideals.

If $P$ is a projective module, the \defit{trace} of $P$ is
\[
  \Tr(P) \coloneqq \sum_{f \in \Hom(P,R)} \im(f).
\]
Recall that $\Tr(P)$ is an idempotent ideal of $R$, and that $\Tr(P)$ is the least among all ideals $I$ such that $P=PI$, see for example \cite{PavelandPuninski} and \cite{whitehead}.

\begin{prop} \label{traceideal}
  Let $R$ be a ring with non-cyclic $\Vmon^{\aleph_0}(R)$ and two projective modules $P_1$ and $P_2$ whose isomorphism classes generate $\Vmon^{\aleph_0}(R)$.
  Then the following statements are equivalent:
  \begin{multicols}{2}
  \noindent
  \begin{equivenumerate}
  \item \label{trace:ji} $\Tr(P_2) \subseteq \Tr(P_1)$,
  \item \label{trace:ir} $\Tr(P_1)=R$,
  \item \label{trace:summand} $P_2$ is a direct summand of $P_1^{(\aleph_0)}$,
  \item \label{trace:free} $P_1^{(\aleph_0)}$ is free.
  \end{equivenumerate}
\end{multicols}
\noindent
If $R$ is hereditary, then we have $\Tr(P_1)=\Tr(P_2)$ if and only if all countably \textup(non finitely\textup) generated projective modules are free.
\end{prop}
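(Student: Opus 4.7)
The plan is to prove the four-way equivalence via the cycle (a)$\Rightarrow$(b)$\Rightarrow$(c)$\Rightarrow$(a) together with (b)$\Leftrightarrow$(d). I will rely on two standard properties of the trace ideal: $\Tr(P^{(I)}) = \Tr(P)$ for every non-empty index set $I$ (since any homomorphism $P^{(I)} \to R$ factors through a finite subsum of coordinate restrictions, and conversely any map $P \to R$ extends by $0$), and $\Tr(P) \subseteq \Tr(Q)$ whenever $P$ is a direct summand of $Q$. These two facts immediately give (c)$\Rightarrow$(a), and also (d)$\Rightarrow$(b) via $\Tr(P_1) = \Tr(P_1^{(\aleph_0)}) = \Tr(R^{(\aleph_0)}) = R$.

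For (a)$\Rightarrow$(b), I would exploit that $[R] \in V^{\aleph_0}(R) = \langle [P_1],[P_2]\rangle_{\aleph_0}$. Since $V^{\aleph_0}(R)$ is non-cyclic, both $P_1$ and $P_2$ are non-zero, and since $R$ is finitely generated no cardinal $\aleph_0$ can appear in its decomposition, so $R \cong P_1^m \oplus P_2^n$ for finite $m, n$. Taking traces and using additivity under direct sums yields $R = \Tr(P_1) + \Tr(P_2)$ when $m,n \ge 1$, and the hypothesis $\Tr(P_2) \subseteq \Tr(P_1)$ then forces $\Tr(P_1) = R$. The edge cases $m=0$ (where $R \cong P_2^n$ already gives $\Tr(P_2) = R \subseteq \Tr(P_1)$) and $n=0$ are handled immediately.

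For (b)$\Rightarrow$(c) and (b)$\Rightarrow$(d): from $1 \in \Tr(P_1)$ one obtains a finite representation $1 = \sum_{i=1}^n g_i(a_i)$ with $g_i \in \Hom(P_1,R)$, hence a split epimorphism $P_1^n \twoheadrightarrow R$, so $R$ is a direct summand of $P_1^n$. Since $P_2$ is countably generated and projective, it is a summand of some $R^{(\aleph_0)}$, and therefore of $P_1^{(\aleph_0)}$, establishing (c). For (d), decompose $P_1^n = R \oplus Q'$ so that $P_1^{(\aleph_0)} \cong R^{(\aleph_0)} \oplus Q'^{(\aleph_0)}$; because $Q'$ is a countably generated projective it is a summand of $R^{(\aleph_0)}$, hence so is $Q'^{(\aleph_0)}$, and an Eilenberg-style swindle $R^{(\aleph_0)} \oplus Q'^{(\aleph_0)} \cong R^{(\aleph_0)}$ yields $P_1^{(\aleph_0)} \cong R^{(\aleph_0)}$.

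For the hereditary addendum, applying the main equivalence to both inclusions shows that $\Tr(P_1) = \Tr(P_2)$ is equivalent to $P_1^{(\aleph_0)} \cong P_2^{(\aleph_0)} \cong R^{(\aleph_0)}$. By \cref{l:twogen-braided} the modules $P_1, P_2$ are finitely generated, so every countably generated projective has the form $P = P_1^{(\alpha_1)} \oplus P_2^{(\alpha_2)}$; if $P$ is not finitely generated, some $\alpha_i = \aleph_0$. Then using $R^{(\aleph_0)} \cong P_j \oplus R^{(\aleph_0)}$ (a consequence of $P_j^{(\aleph_0)} \cong R^{(\aleph_0)}$), one absorbs the remaining summand $P_j^{(\alpha_j)}$ with $\alpha_j \le \aleph_0$ into $R^{(\aleph_0)}$, giving $P \cong R^{(\aleph_0)}$. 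The converse is immediate since $P_1^{(\aleph_0)}$ and $P_2^{(\aleph_0)}$ are themselves countably but not finitely generated projectives, so being free gives $\Tr(P_i) = R$. The main technical obstacle is the bookkeeping in the Eilenberg swindles, in particular ensuring that $P_1^{(\aleph_0)}$ is genuinely \emph{isomorphic} to $R^{(\aleph_0)}$ rather than merely containing it as a direct summand; this requires using projectivity of the error term $Q'$ to place it inside $R^{(\aleph_0)}$.
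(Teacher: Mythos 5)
Your proof is correct and follows essentially the same route as the paper: decompose $R \cong P_1^m \oplus P_2^n$ with $m,n$ finite, deduce \ref{trace:ir} from \ref{trace:ji} by trace properties, use the split epimorphism $P_1^n \to R$ coming from $1 \in \Tr(P_1)$ to make $R^{(\aleph_0)}$ (hence $P_2$) a summand of $P_1^{(\aleph_0)}$, absorb the complement to get $P_1^{(\aleph_0)} \cong R^{(\aleph_0)}$, and handle the hereditary addendum via \cref{l:twogen-braided} and further absorption. The only difference is cosmetic: where you run explicit Eilenberg swindles, the paper invokes its \cref{sumwithbig} (order-unit absorption in the $\aleph_0$-monoid), which packages the same swindle at the monoid level.
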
 

\begin{proof}
  Since $[P_1]$ and $[P_2]$ generate $\Vmon^{\aleph_0}(R)$, there are finite $m$,~$n \ge 0$ such that $R \cong P_1^m \oplus P_2^n$.
  Replacing $P_1$ and $P_2$ by isomorphic copies, we may assume $R=P_1^m \oplus P_2^n$.
  Let $I\coloneqq \Tr(P_1)$ and $J\coloneqq \Tr(P_2)$.

  \begin{proofenumerate}
  \item[\ref{trace:ji}$\,\Rightarrow\,$\ref{trace:ir}] If $J \subseteq I$, then $P_2I = P_2$, and hence $I=R$.

  \item[\ref{trace:ir}$\,\Rightarrow\,$\ref{trace:summand}]
  By the definition of the trace we have $R = \Tr(P_1)= \sum_{f \in \Hom(P_1,R)} \im(f)$.
  Let $1_R \in \im(f_1) + \cdots + \im(f_k)$.
  Then we have an epimorphism $f_1 + \cdots + f_k :  P_1^k  \to R$.
  Therefore, the module $R$ is isomorphic to a direct summand of $P_1^k$.
  Consequently, the module $R^{(\aleph_0)}$ is isomorphic to a direct summand of $P_1^{(\aleph_0)}$.
  The claim follows because $P_2$ is a direct summand of $R^{(\aleph_0)}$.

  \item[\ref{trace:summand}$\,\Rightarrow\,$\ref{trace:free}]
  The assumption implies that $P_1^{(\aleph_0)}$ is an order-unit.
  Furthermore $P_1^{(\aleph_0)} \oplus P_2^{(\aleph_0)} \cong R^{(\aleph_0)}$.
  Now \cref{sumwithbig} implies $P_1^{(\aleph_0)} \cong R^{(\aleph_0)}$.

  \item[\ref{trace:free}$\,\Rightarrow\,$\ref{trace:ji}]
  We have $I=\Tr(P_1)=R$ and hence $J \subseteq I$.
  \end{proofenumerate}

  For the final statement, suppose $\Tr(P_1)=\Tr(P_2)$ (the other direction is trivial).
  Then $P_1^{(\aleph_0)}$ and $P_2^{(\aleph_0)}$ are free by \ref{trace:free}.
  By \cref{l:twogen-braided} the modules $P_1$ and $P_2$ are finitely generated.
  Any countably (non finitely) generated projective module $P$ is therefore isomorphic to $P_1^{(\alpha)} \oplus P_2^{(\beta)}$ with $0 \le \alpha$, $\beta \le \aleph_0$, and at least one of $\alpha$ and $\beta$ infinite.
  Since $P_1^{(\aleph_0)}\cong P_2^{(\aleph_0)} \cong R^{(\aleph_0)}$ is an order-unit, \cref{sumwithbig} implies $P_1^{(\alpha)} \oplus P_2^{(\beta)} \cong R^{(\aleph_0)}$.
\end{proof}

In the following, keep in mind that if $H \cong \Vmon^{\aleph_0}(R)$ for some ring $R$ whose projective modules are direct sums of finitely generated modules, then $\Vmon^{\aleph_0}(R)$ is braided over $\add([R])$ and hence there also exists a hereditary $k$-algebra $S$ such that $H \cong \Vmon^{\aleph_0}(S)$.
In particular, any such $H$ satisfies conditions~\ref{hcc:first}--\ref{hcc:last} of \cref{hereditarycasecor}.

\begin{cor}\label{hereditarycase}
Let $H$ be a non-cyclic $\aleph_0$-monoid with two generators $x_1$,~$x_2$.
\begin{enumerate}
\item\label{hc:incomparable} Suppose that $x_i \not \in \add (x_j)$ for $i \neq j$. Then the following statements are equivalent.
\begin{equivenumerate}
\item\label{hc:i:realization} $H\cong \Vmon^{\aleph_0}(R)$ for a ring whose projective modules are direct sums of finitely generated modules.
\item\label{hc:i:relations} Every equality $\alpha_1 x_i + \beta_1 x_j = \alpha _2 x_i + \beta_2 x_j$ \textup(with $i \ne j$\textup) implies that $\alpha_1$ and $\alpha_2$ are both finite or infinite.
  Furthermore, if $\alpha_1=\alpha_2=\aleph_0$, then $m_1 x_1 + \beta_1 x_2 = m_2 x_1 + \beta _2 x_2$ for some finite integers $m_1$,~$m_2$.
\end{equivenumerate}
\item\label{hc:equality} If $\add(x_1) = \add (x_2)$, then $H$ has only one element with an infinite form.
  The converse is not true.
  Furthermore, the following statements are equivalent.
\begin{equivenumerate}
\item \label{hc:e:equality} $\add (x_1) = \add (x_2)$ and no element has an infinite and a finite form.
\item \label{hc:e:realization} $H \cong \Vmon^{\aleph_0}(R)$ for a ring whose countably \textup(non-finitely\textup) generated projective modules are free.
\end{equivenumerate}
\item\label{hc:inclusion} If $x_1\in \add(x_2)$, then $\aleph_0 x_2 = \aleph_0 x_2 + \beta x_1$ for every cardinal $\beta$. The converse is not true.2
  If $x_1\in \add(x_2)$ and $x_2\notin \add(x_1)$, then the following statements are equivalent.
\begin{equivenumerate}
\item \label{hc:sub:realization} $H \cong \Vmon^{\aleph_0}(R)$ for a ring $R$ over which projective modules are direct sums of finitely generated modules, and with a finitely generated projective module $P$ such that $P^{(\aleph_0)}$ is not free.
\item \label{hc:sub:relations} If $\aleph _0 x_1 + n x_2 = \aleph_0 x_1 + \beta x_2$ with $n$ finite and $\beta \le \aleph_0$, then $\beta$ is finite and there are finite integers $m$,~$m'$ such that $m x_1 + \beta x_2 = m' x_1 + n x_2$.
  Furthermore, no element has an infinite and a finite form.
\item \label{hc:sub:trace} $H \cong \Vmon^{\aleph_0}(R)$ for a ring $R$ two finitely generated projective modules $P_1$ and $P_2$ such that $\Vmon^{\aleph_0}(R) = \big\langle [P_1], [P_2] \big\rangle_{\aleph_0}$ and $\Tr(P_1) \subsetneq \Tr (P_2)$.
\end{equivenumerate}
\end{enumerate}
\end{cor}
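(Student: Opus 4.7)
The plan is to use \cref{hereditarycasecor} as the central reduction tool: for each of the three parts, realizability by a hereditary ring (or, in Part~(3), by a ring over which projective modules decompose as direct sums of finitely generated modules) corresponds to conditions~(i)--(iii) of that theorem holding on $H$, so the task reduces to showing that, under the $\add$-hypothesis of each part, those conditions reformulate exactly as the stated monoid condition~(b). The module-theoretic refinements in Parts~(2) and~(3) additionally use \cref{traceideal} to translate between monoid statements and freeness of $P_i^{(\aleph_0)}$, and \cref{l:twogen-braided} to ensure that the generators correspond to finitely generated projectives.

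For Part~(1), under $x_i \notin \add(x_j)$ for both $i \ne j$, condition~(i) of \cref{hereditarycasecor} reduces to the assertion that $n x_i + \aleph_0 x_j \ne \aleph_0 x_i + \aleph_0 x_j$ for every finite $n$ (otherwise we would be forced into $x_i \in \add(x_j)$). A case analysis shows that, combined with~(iii), this is equivalent to the finite/infinite dichotomy for the coefficient of $x_i$ appearing in~(b): in the mixed case where $\alpha_1$ is finite and $\alpha_2 = \aleph_0$, either $\beta_1$ is finite (producing a common element with both a finite and an infinite form, contradicting~(iii)) or $\beta_1 = \aleph_0$ (so adding $\aleph_0 x_j$ to both sides yields the forbidden equation). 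The collapse statement (the second part of~(b)) is condition~(ii) of \cref{hereditarycasecor} applied to the pair $(x_j, x_i)$; the converse combines~(ii) in both orderings with the dichotomy, after observing that the case $\beta_1 = \beta_2 = \aleph_0$ is trivially witnessed by $m_1 = m_2 = 0$.

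For Part~(2), the first claim uses that $x_i \in \add(x_j)$ makes $x_j$ an order-unit; when $\add(x_1) = \add(x_2)$ both are order-units, and \cref{sumwithbig} collapses every form $\aleph_0 x_i + \beta x_j$ to $\aleph_0 x_i$, so all infinite forms represent the single element $\aleph_0 x_1 = \aleph_0 x_2$. The trivial $\aleph_0$-extension of $\bN_0^2$ (\cref{e:kappa}\ref{e-kappa:trivial}) refutes the converse. For (a)$\,\Leftrightarrow\,$(b): assuming~(a), the collapse just described together with~(iii) verifies conditions~(i)--(iii) of \cref{hereditarycasecor}, giving realizability; since $H$ has a unique infinite element $\aleph_0[R]$, every countably non-finitely-generated projective is isomorphic to $R^{(\aleph_0)}$. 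Conversely, (b) forces every projective to decompose into finitely generated summands (since countably generated projectives are either finitely generated or free), so \cref{cor:fin-braided} applies and $H$ is braided; freeness of $P_1^{(\aleph_0)}$ and $P_2^{(\aleph_0)}$ together with \cref{traceideal} and \cref{easyfactlemma1}\ref{efl1:add-inclusion-iff} then yields $\add(x_1) = \add(x_2)$, while the no-mixed-form condition is just~(iii).

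For Part~(3), the first assertion is a direct application of \cref{sumwithbig}, using that $x_2$ is an order-unit when $x_1 \in \add(x_2)$; the same $\bN_0^2$-based example refutes the converse. For (a)$\,\Leftrightarrow\,$(b), under $x_1 \in \add(x_2)$ and $x_2 \notin \add(x_1)$, condition~(i) of \cref{hereditarycasecor} for $i = 1$ holds automatically, condition~(i) for $i = 2$ forbids exactly the $\beta = \aleph_0$ case of~(b), and condition~(ii) for $i = 2$ is the $m, m'$ reducibility statement of~(b). For (a)$\,\Leftrightarrow\,$(c), combine \cref{easyfactlemma1}\ref{efl1:add-inclusion-iff} and \cref{traceideal}: $x_1 \in \add(x_2)$ gives $\aleph_0 x_2 = \aleph_0[R]$, hence $P_2^{(\aleph_0)}$ is free and $\Tr(P_2) = R$, while $x_2 \notin \add(x_1)$ gives $P_1^{(\aleph_0)}$ not free and $\Tr(P_1) \subsetneq R$, so $\Tr(P_1) \subsetneq \Tr(P_2)$; taking $P = P_1$ witnesses the non-freeness in~(a), and the converse passes through the same translation. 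The main obstacle throughout is the case analysis in Part~(1); once that is handled carefully, the rest consists of direct translations via the tools already developed.
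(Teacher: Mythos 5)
Your proposal is correct and follows essentially the same route as the paper's proof: each equivalence is reduced to the conditions of \cref{hereditarycasecor} (together with \cref{easyfactlemma1}, \cref{sumwithbig}, \cref{l:twogen-braided}, and \cref{traceideal}), with the same $\bN_0^2\cup\{\infty\}$ example refuting the converses in parts (2) and (3), and your routing of part (2)(b)$\Rightarrow$(a) through freeness of $P_i^{(\aleph_0)}$ and \cref{easyfactlemma1}\ref{efl1:add-inclusion-iff} is only a cosmetic variant of the paper's argument via the uniqueness of the countably non-finitely generated projective. The one slip is a citation: the braidedness you need there should come from \cref{cor:projective-braided}\ref{pb:fg} (or \cref{l:twogen-braided}) after your correct observation that all projectives are then direct sums of finitely generated modules, not from \cref{cor:fin-braided}, whose hypotheses are specific ring classes that need not hold for such a ring.
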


\begin{proof}
  \begin{proofenumerate}
  \item[\ref{hc:incomparable}]
  \ref{hc:i:realization}$\,\Rightarrow\,$\ref{hc:i:relations}
  The conditions in \cref{hereditarycasecor} hold.
  Suppose $\aleph _0 x_i + \beta _1 x_j = \alpha_2 x_i + \beta_2 x_j$ with $\alpha_2$ finite.
  Adding $\aleph_0 x_j$ yields $\aleph_0 x_i + \aleph_0 x_j = \alpha_2 x_i + \aleph_0 x_j$.
  Then \cref{hereditarycasecor}\ref{hcc:threeinf} implies $x_i \in \add(x_j)$, a contradiction.
  The second implication holds by \cref{hereditarycasecor}\ref{hcc:twoinf}.

  \ref{hc:i:relations}$\,\Rightarrow\,$\ref{hc:i:realization}
  We must verify the conditions of \cref{hereditarycasecor}.
  Condition~\ref{hcc:threeinf} holds vacuously because the assumption is never met,
  condition~\ref{hcc:twoinf} holds by assumption.
  Finally, condition~\ref{hcc:finite-infinite} trivially follows from the stronger assumption in \ref{hc:i:relations}.

  \item[\ref{hc:equality}]
  The element $x \coloneqq x_1+x_2$ is an order-unit.
  From $x_i \in \add(x_j)$ (with $i \ne j$) we have $k x_j = x_i + t$ for some finite $k$ and some $t \in H$.
  Thus $(k+1)x_j = x + t$, and $\aleph_0 x_j = \aleph_0 x + \aleph_0 t$.
  By \cref{sumwithbig} this implies $\aleph_0 x_j = \aleph_0 x$.
  Since $H$ is generated by $x_1$ and $x_2$, every infinite form represents the same element $\aleph_0 x$.

  To see that the converse is not true, let $H \coloneqq \mathbb N_0^2 \cup \{\infty\}$.
  Take $x_1\coloneqq (1, 0)$, $x_2\coloneqq (0, 1)$ and consider $H$ as $\aleph_0$-monoid where sums with infinite support are equal to $\infty$. 
Then clearly $\text{add}(x_1) $ and $\text{add}(x_2)$ are different and neither is contained in the other.

\ref{hc:e:equality}$\,\Rightarrow\,$\ref{hc:e:realization}
  We verify the conditions of \cref{hereditarycasecor}.
  Condition~\ref{hcc:threeinf} holds because there is only one element with an infinite form.
  Condition~\ref{hcc:twoinf} holds vacuously, because the condition is never met.
  Condition~\ref{hcc:finite-infinite} holds by assumption.
  The elements represented by finite forms correspond to finitely generated projective modules, and so the unique (up to isomorphism) countably but not finitely generated projective module is free.

  \ref{hc:e:realization}$\,\Rightarrow\,$\ref{hc:e:equality}
  Since there is a unique countably generated projective module up to isomorphism, \cref{hereditarycasecor}\ref{hcc:threeinf} implies $x_i \in \add(x_j)$ and $x_j \in \add(x_i)$.
  Condition~\ref{hcc:finite-infinite} implies the remaining claim.

  \item[\ref{hc:inclusion}]
  Let $x \coloneqq x_1 + x_2$.
  As in the proof of \ref{hc:equality}, we see $\aleph_0 x_2 = \aleph_0 x$.
  Consequently, $\aleph_0 x_2 + \beta x_1 = \aleph_0 x_2$ for every $\beta \leq \aleph_0$.
  The example $\bN_0^2 \cup \{\infty\}$ in \ref{hc:equality} satisfies $\aleph_0 x_2 = \aleph_0 x_2 + \beta x_1$ for every cardinal $\beta$, but $x_1\notin \add(x_2)$.
  
  Now suppose $x_1\in \add(x_2)$ and  $x_2\notin \add(x_1)$.

  \ref{hc:sub:realization}$\,\Rightarrow\,$\ref{hc:sub:relations}
  The first statement follows from \cref{hereditarycasecor}\ref{hcc:threeinf} because $x_2 \not \in \add(x_1)$.
  The other two statements are \ref{hcc:twoinf} and \ref{hcc:finite-infinite} of \cref{hereditarycasecor}.

  \ref{hc:sub:relations}$\,\Rightarrow\,$\ref{hc:sub:realization}
  Conditions \ref{hcc:twoinf} and \ref{hcc:finite-infinite} of \cref{hereditarycasecor} are clear from \ref{hc:sub:relations}.
  For \ref{hcc:threeinf} we have to check two cases.
  The case $x_i=x_1$ and $x_j=x_2$ is true because $\aleph_0 x_2 = \aleph_0 x_1 + \aleph_0 x_2$ and $x_1 \in \add(x_2)$.
  In the case $x_i=x_2$ and $x_j=x_1$, we have $n x_2 + \aleph_0 x_1 \ne \aleph_0 x_2 + \aleph_0 x_1$ by assumption, and so in this case \ref{hcc:threeinf} holds vacuously.

  \ref{hc:sub:realization}$\,\Rightarrow\,$\ref{hc:sub:trace}
  Let $[P_i]$ be the image of $x_i$ in $\Vmon^{\aleph_0}(R)$.
  Since $P_2^{(\aleph_0)} \cong P_1^{(\aleph_0)} \oplus P_2^{(\aleph_0)} \cong R^{(\aleph_0)}$, the module $P_2^{(\aleph_0)}$ is free.
  So $\Tr(P_2) = R$ and $\Tr(P_1) \subseteq \Tr(P_2)$.
  If there were equality, then $P_1^{(\aleph_0)} \cong P_2^{(\aleph_0)}$ is free by \cref{traceideal}.
  Since $P_2$ is finitely generated (\cref{l:twogen-braided}), this means $[P_2]$ in $\add([P_1])$, in contradiction to our assumptions. 

  \ref{hc:sub:trace}$\,\Rightarrow\,$\ref{hc:sub:realization} By Proposition \ref{traceideal}, the module $P_1^{(\aleph_0)}$ is non-free.\qedhere
  \end{proofenumerate}
\end{proof}

\bibliographystyle{hyperalphaabbr}
\bibliography{kappa_monoids}

\end{document}